\tikzstyle{vertex}=[circle, draw, inner sep=0pt, minimum size=6pt]
\newcommand{\vertex}{\node[vertex]}
\newtheorem{theorem}{Theorem}[section]
\newtheorem{lemma}[theorem]{Lemma}
\newtheorem{corollary}[theorem]{Corollary}
\newtheorem{conjecture}[theorem]{Conjecture}
\newtheorem{hypothesis}[theorem]{Hypothesis}
\newtheorem{claim}{Claim}[theorem]
\newenvironment{subproof}[1][\proofname]{%
  \begin{proof}[#1]%
}{%
  \end{proof}%
}
\numberwithin{equation}{section}
\theoremstyle{definition}
\newtheorem{definition}[theorem]{Definition}
\newtheorem{notation}[theorem]{Notation}
\theoremstyle{remark}
\newtheorem{remark}[theorem]{Remark}
\newcommand{\bFp}{\mathbb{F}_p}
\newcommand{\rtp}{rank-($\leq t$) perturbation }
\newcommand{\Or}{Or(G,R)}
\newcommand{\Np}{N^{+}}
\DeclareMathOperator{\pert}{pert}
\DeclareMathOperator{\dist}{dist}
\begin{document}
\title[Perturbed dyadic matroids]{On perturbations of highly connected dyadic matroids}

\author{Kevin Grace}
\email{kgrace3@lsu.edu}
\author{Stefan H. M. van Zwam}
\email{svanzwam@math.lsu.edu}
 \address{Department of Mathematics\\
 Louisiana State University\\
 Baton Rouge, Louisiana}

\thanks{The research for this paper was supported by National Science Foundation grant 1500343.}

\subjclass{05B35}
\date{\today}

\begin{abstract}
Geelen, Gerards, and Whittle \cite{ggw15} announced the following result: let $q = p^k$ be a prime power, and let $\mathcal{M}$ be a proper minor-closed class of $\mathrm{GF}(q)$-representable matroids, which does not contain $\mathrm{PG}(r-1,p)$ for sufficiently high $r$. There exist integers $k, t$ such that every vertically $k$-connected matroid in $\mathcal{M}$ is a rank-$(\leq t)$ perturbation of a frame matroid or the dual of a frame matroid over $\mathrm{GF}(q)$. They further announced a characterization of the perturbations through the introduction of subfield templates and frame templates.

We show a family of dyadic matroids that form a counterexample to this result. We offer several weaker conjectures to replace the ones in \cite{ggw15}, discuss consequences for some published papers, and discuss the impact of these new conjectures on the structure of frame templates.
\end{abstract}

\maketitle
\section{Introduction}
\label{Introduction}
Robertson and Seymour profoundly transformed graph theory with their Graph Minors Theorem \cite{rs04}. Geelen, Gerards, and Whittle are on track to do the same for matroid theory with their Matroid Structure Theory for matroids representable over a finite field (see, e.g. \cite{ggw07}). The theorem they intend to prove is the following:

\begin{conjecture}[Matroid Structure Theorem, rough idea]\label{con:matroidstructure}
  Let $\mathbb{F}$ be a finite field, and let $\mathcal{M}$ be a proper minor-closed class of $\mathbb{F}$-representable matroids. If $M \in \mathcal{M}$ is sufficiently large and has sufficiently high branch-width, then $M$ has a tree-decomposition, the parts of which correspond to mild modifications of matroids representable over a proper subfield of $\mathbb{F}$, or to mild modifications of frame matroids and their duals.
\end{conjecture}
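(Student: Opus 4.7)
The plan is to follow the broad strategy of the Geelen--Gerards--Whittle program, which parallels Robertson and Seymour's Graph Minors series but adapted to the setting of $\mathbb{F}$-representable matroids. The argument will split into three stages: a matroid analog of the Grid Theorem, a local structure theorem for vertically highly connected members of $\mathcal{M}$, and a global tree-decomposition that glues the local pieces together.

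First I would establish that in a proper minor-closed class $\mathcal{M}$ of $\mathbb{F}$-representable matroids, sufficiently high branch-width forces either a large projective-geometry minor (ruled out by properness, after possibly passing to a subfield $\mathbb{F}'$) or a large ``grid-like'' structure of one of a short list of basic types: the cycle matroid of a large grid, a large frame matroid, or a matroid representable over a proper subfield. This matroid Grid Theorem is the device that restricts the range of possible unavoidable substructures and points to the three basic ingredients that will appear in the tree-decomposition.

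Second, for vertically $k$-connected $M \in \mathcal{M}$ with $k$ large, I would prove the local structure theorem: $M$ is a rank-$(\leq t)$ perturbation of one of the basic matroids, for $t = t(\mathcal{M}, \mathbb{F})$. The natural method is to fix an $\mathbb{F}$-representation $A$ of $M$, locate inside it a large ``core'' submatrix whose combinatorial skeleton already looks like a representation of a frame matroid (or dual frame matroid, or subfield-representable matroid), and argue that the remaining rows and columns constitute a bounded-rank perturbation. The frame templates and subfield templates of \cite{ggw15} are designed precisely to catalogue which perturbations can arise in this step.

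Third, for matroids of high branch-width but only moderate connectivity, I would recurse on a branch-decomposition, cutting $M$ along each low-order separation, applying the local theorem to the resulting highly connected pieces, and assembling the pieces into the required tree-decomposition whose nodes are mild modifications of the basic classes. The main obstacle is exactly the subject of the present paper: step two. The counterexample we construct shows that the template catalogue of \cite{ggw15} is too narrow, so even the statement of the local structure theorem must be broadened before a proof can succeed. A secondary obstacle is controlling the accumulation of perturbation rank as one assembles the local pieces across the branch-decomposition, since perturbations do not compose additively in an obvious way. Until a revised template catalogue---along the lines of the weaker conjectures proposed in later sections---is shown to be complete, the local step of the program is where the essential work still lies.
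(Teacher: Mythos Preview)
The statement you are addressing is not a theorem proved in the paper; it is explicitly labeled a \emph{conjecture} (the ``rough idea'' of the Matroid Structure Theorem), and the paper offers no proof of it. Indeed, the surrounding text makes clear that the words ``tree-decomposition'', ``parts'', ``correspond to'', and ``mild modifications'' hide over fifteen years of unpublished work by Geelen, Gerards, and Whittle. There is therefore no ``paper's own proof'' against which to compare your proposal.

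Your proposal is accordingly not a proof but a sketch of the Geelen--Gerards--Whittle program, and you yourself identify where it breaks down: your second step asserts the local structure theorem (essentially Conjecture~\ref{con:perturb}) as something one ``would prove'', but the entire point of the present paper is that this step, as stated in \cite{ggw15}, is \emph{false}. The counterexample of Section~\ref{sec:the_construction} shows that vertically $k$-connected dyadic matroids need not be bounded-rank perturbations of frame matroids, their duals, or subfield matroids. So your step two is not a gap to be filled by more work along the same lines; it requires a genuinely different formulation (such as Hypotheses~\ref{hyp:connected}--\ref{hyp:cliquetemplate}), and until one of those is established, no outline of this shape constitutes a proof.
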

The words ``tree-decomposition'', ``parts'', ``correspond to'', and ``mild modifications'' need (a lot of) elaboration, and hide over 15 years of very hard work. Whittle \cite{WhiQuote} described the proof of Rota's Conjecture, which has the Matroid Structure Theorem as a major ingredient, as follows:

\begin{quotation}
    ``It's a little bit like discovering a new mountain -- we've crossed many hurdles to reach a new destination and we have returned scratched, bloodied and bruised from the arduous journey -- we now need to create a pathway so others can reach it.''
\end{quotation}

In this paper we will only focus on the last part of Conjecture \ref{con:matroidstructure}. Geelen, Gerards, and Whittle announced without proof a theorem about that part \cite[Theorem 3.1]{ggw15} that we will repeat here as Conjecture \ref{con:perturb}. First, we require some definitions. An $\mathbb{F}$-\emph{represented matroid} (or simply \emph{represented matroid} if the field is understood from the context) is a matroid with a fixed class of representation matrices over $\mathbb{F}$ that are row equivalent up to column scaling and removal of zero rows. A \emph{represented frame matroid} is a matroid with a representation matrix $A$ that has at most two nonzero entries per column. The matroids we will be working with in Section \ref{sec:the_construction} are dyadic and therefore ternary. Since ternary matroids are uniquely $\mathrm{GF}(3)$-representable \cite{bl76}, we will not make any distinction between matroids and represented matroids in that case. We also extend this convention to binary matroids, particularly complete graphic matroids, since every binary matroid that is representable over some field $\mathbb{F}$ is uniquely $\mathbb{F}$-representable.

A matroid (or represented matroid) is \emph{vertically $k$}-connected if, for every separation $(A,B)$ of order less than $k$, one of $A$ and $B$ spans $E(M)$. If a matroid (or represented matroid) $M$ is vertically $k$-connected, then $M^*$ is \emph{cyclically $k$}-connected. A \emph{rank-$(\leq t)$ perturbation} of a represented matroid $M$ is the represented matroid obtained by adding a matrix of rank at most $t$ to the representation matrix of $M$.

\begin{conjecture}[{\cite[{Theorem 3.1}]{ggw15}}]\label{con:perturb}
Let $\mathbb F$ be a finite field and let $\mathcal M$ be a proper minor-closed class of $\mathbb F$-represented matroids. Then there exist $k,t\in\mathbb Z_+$ such that each vertically $k$-connected member of $\mathcal M$ is a rank-$(\leq t)$ perturbation of an $\mathbb F$-represented matroid $N$, such that either
\begin{itemize}
\item[(i)] $N$ is a represented frame matroid, 
\item[(ii)] $N^*$ is a represented frame matroid, or
\item[(iii)] $N$ is confined to a proper subfield of $\mathbb F$.
\end{itemize}
\end{conjecture}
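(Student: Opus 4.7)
Since the abstract announces Conjecture \ref{con:perturb} to be false, my plan is not to prove it directly but to construct and verify a refuting family. A counterexample must live in a proper minor-closed class of $\mathbb{F}$-represented matroids for which option (iii) is vacuous, so the natural setting is $\mathbb{F}=\mathrm{GF}(3)$ and a proper minor-closed class of dyadic matroids (the prime field $\mathrm{GF}(3)$ has no proper subfield). I would seek an infinite sequence $\{M_n\}$ of dyadic represented matroids with two properties: (a) each $M_n$ is vertically $k$-connected with $k\to\infty$; and (b) for every fixed $t$, only finitely many $M_n$ are rank-$(\leq t)$ perturbations of a represented frame matroid or of the dual of one. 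Provided that the minor-closure of $\{M_n\}$ is a \emph{proper} subclass of the dyadic matroids, these properties together contradict the conclusion of Conjecture \ref{con:perturb}.

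The first step is to specify the family. A promising recipe is to start with a familiar family of highly connected dyadic matroids --- for instance the complete graphic matroids $M(K_n)$, which are frame --- and modify each representation matrix by a structured block of columns (or rows) with three or more nonzero dyadic entries whose pattern survives all admissible row operations and column rescalings. The additional structure must be rigid enough to persist under passage to minors, yet confined enough that the resulting class is a proper subclass of the dyadic matroids.

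The main obstacle is step (b): ruling out every rank-$(\leq t)$ perturbation to a frame or dual-frame matroid. For this I would design an invariant of represented matroids that is large on each $M_n$, is small (or zero) on frame matroids, and is stable under rank-$(\leq t)$ perturbation up to a loss depending only on $t$. A natural candidate is a weighted count of columns that must contain at least three nonzero entries after any row operation and column rescaling. Adding a rank-$t$ matrix can alter the nonzero pattern of arbitrarily many columns, but the perturbation itself is carried by only $t$ linear directions; converting this heuristic into a rigorous lower bound is the crux of the argument. The dual case should be handled by the parallel row invariant, using that rank-$(\leq t)$ perturbation is self-dual.

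Finally, vertical $k$-connectivity of each $M_n$ should follow from the connectivity of the underlying structure chosen in step one, together with a local check that the added block introduces no small vertical separation. The delicate point throughout is that rank-$(\leq t)$ perturbations are very flexible --- they can destroy and create circuits wholesale --- so the invariant in step three must be genuinely insensitive to such changes. I expect essentially all of the technical work to lie in isolating such an invariant, or equivalently, in pinpointing the combinatorial feature of $M_n$ that obstructs being close to a frame or coframe matroid.
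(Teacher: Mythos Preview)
Your high-level strategy is correct: the statement is false, and the right setting is the dyadic matroids viewed as $\mathrm{GF}(3)$-represented matroids, so that option (iii) is vacuous. But the concrete plan has two genuine gaps.

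First, the proposed invariant---a count of columns forced to have three or more nonzero entries under all row operations and rescalings---cannot do the work you assign to it. A rank-$1$ perturbation $A\mapsto A+uv^{T}$ alters every column $j$ with $v_j\neq 0$, and there is no bound on how many that is; a single ``linear direction'' can turn an arbitrary matrix into a frame matrix. You acknowledge this, but ``the perturbation is carried by only $t$ linear directions'' is a hope, not an argument, and no column-pattern count is stable under bounded-rank perturbation in the way you need. The paper does not find such an invariant either. For the hard direction (not close to a frame matroid) it instead plants a growing number of identical four-element \emph{gadgets} in the construction, uses that the class of rank-$(\leq 2t)$ perturbations of frame matroids is minor-closed, and then applies pigeonhole to the at most $2t$ perturbation rows: since there are only $3^{8t}$ possible $2t\times 4$ ternary blocks, many gadgets must receive identical perturbation data. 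This reduces the obstruction to a finite computer check that one specific small minor is never signed-graphic, regardless of the common perturbation block.

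Second, your seed family is oriented the wrong way. Starting from $M(K_n)$---already a frame matroid---any modification of bounded size leaves you inside the rank-$(\leq t)$ perturbations of frame matroids, so you would need unboundedly many modified columns; but then vertical $k$-connectivity no longer follows from that of $M(K_n)$ by a ``local check''. The paper works on the sparse side: it begins with a cyclically $k$-connected cubic graph of large girth, ornaments many far-apart vertices with copies of a non-graphic signed-graphic $M(K_4)$ glued to $M(K_5)$, and then \emph{dualizes}. Vertical $k$-connectivity of $M$ then comes from cyclic $k$-connectivity of the ornamentation, which is a nontrivial but tractable graph-theoretic argument. The easy direction (not a rank-$(\leq t)$ perturbation of a frame matroid) is handled by a density count: duals of frame matroids satisfy $\varepsilon\leq 3r$, this bound degrades only by a factor $q^{t}$ under rank-$(\leq t)$ perturbation, and the construction contains an $M(K_m)$-minor for $m$ as large as desired.

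In short, the outline is right in spirit, but both load-bearing steps---the invariant and the choice of seed---need to be replaced by something closer to the paper's gadget-plus-pigeonhole mechanism on the cographic side.
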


In this paper we present a counterexample to this conjecture. In particular, we build a family of dyadic matroids that are vertically $k$-connected for any desired $k$, and not a bounded-rank perturbation of either a represented frame matroid or the dual of a represented frame matroid. The construction starts with a cyclically $k$-connected graph $G$, modifies it at a number of vertices that grows with $|V(G)|$, and dualizes the resulting matroid. We detail the construction, and prove its key properties, in Section \ref{sec:the_construction}.

Vertical connectivity and cographic matroids are not very compatible notions. Because of this, our examples, which are very sparse, arise only in situations where one might expect the second outcome of the conjecture to hold. For this reason, the forthcoming proof of the Matroid Structure Theorem itself is not jeopardized, and versions of Conjecture \ref{con:perturb} can be recovered.  In Section \ref{sec:discussion}, we provide several such conjectures. Included in Section \ref{sec:discussion} is Section \ref{sub:consequences}, where we discuss consequences for \cite{gvz17} and \cite{nvz15}. In Section \ref{sec:refined_templates}, we discuss consequences for the notion of \emph{frame templates}, introduced in \cite{ggw15} to describe the perturbations in more detail.

\section{Preliminaries}
\label{Preliminaries}
Unexplained notation and terminology will generally follow Oxley ~\cite{o11}. One exception is that we denote the vector matroid of a matrix $A$ by $M(A)$, rather than $M[A]$. The following characterization of the dyadic matroids was shown by Whittle in \cite{w97}.

\begin{theorem}
\label{thm:dyadic}
A matroid is \emph{dyadic} if and only if it is representable over both $\mathrm{GF}(3)$ and $\mathrm{GF}(5)$.
\end{theorem}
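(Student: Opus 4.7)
I take the working definition that $M$ is \emph{dyadic} iff it has a representation over $\mathbb{Q}$ whose nonzero subdeterminants all lie in $\{\pm 2^k : k \in \mathbb{Z}\}$ (equivalently, a representation over the dyadic partial field $\mathbb{D}$), and I treat the two implications separately. For the forward direction, given such a representation $A$, scale rows and columns by powers of $2$ so that $A$ has integer entries. Because $2$ is a unit in both $\mathrm{GF}(3)$ and $\mathrm{GF}(5)$, the coordinate-wise reduction maps $\varphi_p : \mathbb{Z} \to \mathrm{GF}(p)$ for $p \in \{3,5\}$ each send every $\pm 2^k$ to a nonzero element. Hence the zero pattern of every subdeterminant is preserved, and $\varphi_3(A), \varphi_5(A)$ represent $M$ over $\mathrm{GF}(3)$ and $\mathrm{GF}(5)$ respectively.

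For the reverse direction, the substantive half, the plan is to build a dyadic representation from given $\mathrm{GF}(3)$- and $\mathrm{GF}(5)$-representations $A_3, A_5$ of $M$. Using $2$-sum decompositions, which preserve representability over each of the three relevant partial fields in a compatible way, one first reduces to the case where $M$ is $3$-connected. Then, fixing a basis, one puts both representations in reduced form $[I \mid X_3]$ and $[I \mid X_5]$. Since ternary matroids are uniquely $\mathrm{GF}(3)$-representable \cite{bl76}, after suitable row and column scalings of the $\mathrm{GF}(5)$-representation the matrices $X_3, X_5$ share the same zero pattern. Now lift entry-by-entry to a rational matrix $X$ whose nonzero entries are $\pm 2^k$: the sign of $X_{ij}$ is forced by $(X_3)_{ij} \in \{\pm 1\}$, and the exponent $k$ is forced (modulo $4$) by the requirement that $X_{ij} \equiv (X_5)_{ij} \pmod{5}$, since $2$ has multiplicative order $4$ in $\mathrm{GF}(5)^{\times}$.

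The \emph{main obstacle} is global consistency: the lift $X$ must have every nonzero subdeterminant in $\{\pm 2^k\}$, not merely every entry. Equivalently, for every four-element subset of the ground set corresponding to a $2 \times 2$ submatrix, the associated cross-ratio must lie in $\mathbb{D}$. In a $3$-connected matroid these cross-ratios are determined by $M$, and they must simultaneously lie in the cross-ratio sets of $\mathrm{GF}(3)$ and of $\mathrm{GF}(5)$; I would prove that this intersection (viewed in $\mathbb{Q}$ compatibly with both reductions) is exactly the cross-ratio set of $\mathbb{D}$. An induction on $|E(M)|$, adding elements one at a time and using uniqueness of the $\mathrm{GF}(3)$-representation to pin down each new exponent, then produces the required dyadic representation and completes the proof.
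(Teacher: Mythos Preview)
The paper does not prove this statement at all: it is quoted as a known result of Whittle~\cite{w97} and is used as background. There is therefore no ``paper's own proof'' to compare against; your proposal should be measured against Whittle's argument, not against anything in this manuscript.

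On the merits: your forward direction is correct and standard. For the reverse direction your outline has the right architecture---reduce to $3$-connected, pass to reduced form, and lift to a $\mathbb{D}$-matrix---and this is indeed close in spirit to Whittle's approach. But two steps are not yet arguments. First, the zero patterns of $X_3$ and $X_5$ agree automatically because the support of each column of a reduced representation records a fundamental circuit of $M$; you do not need, and should not invoke, unique $\mathrm{GF}(3)$-representability for this. Second, your entry-by-entry lift is underdetermined: the $\mathrm{GF}(5)$ entry fixes the exponent of $2$ only modulo $4$, so there are infinitely many lifts of each entry, and nothing in your sketch selects a globally consistent choice. The sentence ``an induction on $|E(M)|$ \ldots\ then produces the required dyadic representation'' is exactly where the work lies, and it is not done. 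The clean way to close this gap---and what Whittle in effect does---is to normalize along a spanning tree of the support bipartite graph of $X$ (setting those entries to $1$ in all three matrices), and then argue that each off-tree entry is pinned down by a single $2\times 2$ cross-ratio which, being simultaneously a cross-ratio over $\mathrm{GF}(3)$ and over $\mathrm{GF}(5)$, must equal one of $-1,\,2,\,\tfrac{1}{2}$ in $\mathbb{Q}$; this forces $X$ to be a $\mathbb{D}$-matrix. Without that normalization-and-cross-ratio step (or an equivalent device), your proposal is a plan rather than a proof.
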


We will need some definitions and results related to bounded-rank perturbations of represented matroids. The next three definitions are from \cite{ggw15}.

\begin{definition}
Let $M_1$ and $M_2$ be $\mathbb{F}$-represented matroids on a common ground set. Then $M_2$ is a \emph{rank-$(\leq t)$ perturbation} of $M_1$ if there exist matrices $A_1$ and $P$ such that $M(A_1) = M_1$, the rank of $P$ is at most $t$, and $M(A_1 + P) = M_2$.
\end{definition}

\begin{definition}
 Let $M_1$ and $M_2$ be $\mathbb{F}$-represented matroids on ground set $E$. If there is some $\mathbb{F}$-represented matroid $M$ on ground set $E\cup\{e\}$ such that $M_1=M\backslash e$ and $M_2=M/e$, then $M_1$ is an \textit{elementary lift} of $M_2$, and $M_2$ is an \textit{elementary projection} of $M_1$.
\end{definition}

Note that an elementary lift of a represented matroid $M(A)$ can be obtained by appending a row to $A$.

\begin{definition}
Let $M_1$ and $M_2$ be $\mathbb{F}$-represented matroids on a common ground set. We denote by $\dist(M_1,M_2)$ the minimum number of elementary lifts and elementary projections needed to transform $M_1$ into $M_2$, and we denote by $\pert(M_1,M_2)$ the smallest integer $t$ such that $M_2$ is a rank-($\leq t$) perturbation of $M_1$.
\end{definition}

The following observation will be quite useful; in particular, we use it to prove Lemma \ref{pert&dist} below.

\begin{remark}
\label{contract}
Suppose that $M_1=M(A_1)$ is a rank-$(\leq t)$ perturbation of $M_2=M(A_2)$. Let $P$ be the matrix of rank at most $t$ such that $A_1+P=A_2$. Let $\{v_1,v_2,\ldots,v_t\}$ be a basis for the row space of $P$. Note that neither $A_1$, $P$, nor $A_1+P$ need have full row rank. If $r=r(M_1)$, then we may assume that $P$ has $r+t$ rows. If $a_{i,j}\in\mathbb{F}$ for all $i,j$, let $a_{i,1}v_1+a_{i,2}v_2+\ldots+a_{i,t}v_t$ be the $i$-th row of $P$. Then $M_1$ can be obtained by contracting $C$ from the represented matroid obtained from the following matrix.
\begin{center}
\begin{tabular}{|ccccccc|ccc|}
\multicolumn{7}{c}{}&\multicolumn{3}{c}{$C$}\\
\hline
&&&$v_1$&&&&&&\\
&&&$\vdots$&&&&&$-I$&\\
&&&$v_t$&&&&&&\\
\hline
&&&&&&&$a_{1,1}$&$\dots$&$a_{1,t}$\\
&&&$A_1$&&&&$\vdots$&$\ddots$&$\vdots$\\
&&&&&&&$a_{r+t,1}$&$\dots$&$a_{r+t,t}$\\
\hline
\end{tabular}
\end{center}
\end{remark}

Lemma \ref{pert&dist} appears in ~\cite{ggw15} as Lemma 2.1; however, no proof was given in ~\cite{ggw15}. We will need it to prove our main result, so we give a proof here.

\begin{lemma}[{\cite[Lemma 2.1]{ggw15}}]
\label{pert&dist}
If $M_1$ and $M_2$ are $\mathbb{F}$-represented matroids on the same ground set, then
\[ \pert(M_1,M_2)\leq \dist(M_1,M_2)\leq 2\pert(M_1,M_2).\]
\end{lemma}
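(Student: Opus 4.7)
The plan is to prove the two inequalities separately, each by establishing the bound for a single atomic operation and then composing. The key dictionary is that an elementary lift or projection corresponds to a perturbation of rank at most one, while a rank-one perturbation is realized by one lift followed by one projection, accounting for the factor of $2$.

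For $\pert(M_1,M_2)\le\dist(M_1,M_2)$, I would fix a shortest sequence $M_1=N_0,N_1,\ldots,N_d=M_2$ of elementary lifts and projections. Without loss of generality each step has $N_{i-1}=M\setminus e$ and $N_i=M/e$ for some witness $M$ with representation $[A\mid u]$, so that $N_{i-1}=M(A)$. Pivoting on any nonzero entry $u_k$ of $u$ and then deleting the pivoted row and the column $e$ produces a representation $A'$ of $N_i$; after padding $A'$ with a zero row, every row of $A'-A$ is a scalar multiple of the single row $\text{row}_k(A)$, so the correction matrix has rank at most one and $\pert(N_{i-1},N_i)\le 1$. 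To assemble the $d$ rank-one corrections into a single rank-$(\le d)$ perturbation between $M_1$ and $M_2$, I would establish subadditivity of $\pert$: two representation matrices of the same matroid differ by an invertible row operation, a diagonal column scaling, and insertion or deletion of zero rows, each of which preserves the rank of a perturbation matrix when applied uniformly to both sides. Inductively realigning the representation chosen for each $N_i$ allows the successive corrections to be summed in a common coordinate system, yielding $\pert(M_1,M_2)\le d$.

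For $\dist(M_1,M_2)\le 2\pert(M_1,M_2)$, let $t=\pert(M_1,M_2)$ with $A_1+P=A_2$ and $\text{rank}(P)=t$, and decompose $P=\sum_{j=1}^{t}u_jv_j^T$ into rank-one summands. Set $A^{(j)}=A_1+\sum_{i\le j}u_iv_i^T$ and $M^{(j)}=M(A^{(j)})$, so $M^{(0)}=M_1$ and $M^{(t)}=M_2$. For each step $M^{(j-1)}\to M^{(j)}$, let $L_j$ be the matroid obtained by placing the row $v_j^T$ on top of $A^{(j-1)}$. The auxiliary matroid on $E\cup\{e\}$ whose representation is that appended matrix together with a new column equal to $-1$ in the appended row and $0$ elsewhere witnesses $L_j$ as an elementary lift of $M^{(j-1)}$: deleting $e$ recovers $L_j$, and contracting $e$ recovers $M^{(j-1)}$. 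Replacing those zero entries in the new column by the entries of $u_j$ yields another witness whose deletion of $e$ is still $L_j$, but whose contraction, after pivoting on the $-1$, equals $M(A^{(j-1)}+u_jv_j^T)=M^{(j)}$; so $L_j$ is also an elementary lift of $M^{(j)}$. Therefore $\dist(M^{(j-1)},M^{(j)})\le 2$, and the triangle inequality for $\dist$ gives $\dist(M_1,M_2)\le 2t$. (This is essentially the content of Remark~\ref{contract} applied rank by rank; an alternative single-shot version uses the matroid $N$ constructed there, noting that $N\setminus C$ sits $t$ elementary lifts above $M_1$ and is connected to $N/C=M_2$ by the $t$ elementary steps that convert deletions of the elements of $C$ to contractions one at a time.)

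The main technical obstacle is the subadditivity step for the first inequality. The per-step pivot computation is elementary, but chaining many perturbations requires verifying that the class of matrix equivalences relating representations of the same matroid (invertible row operations, diagonal column scalings, and zero-row manipulations) preserves the rank of a correction matrix, so that the corrections can be added coherently in a common coordinate system.
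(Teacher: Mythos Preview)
Your proof is correct and follows the same overall strategy as the paper: show that each elementary lift or projection is a rank-$(\le 1)$ perturbation, that each rank-$1$ perturbation is realized by one lift followed by one projection (via the auxiliary matrix of Remark~\ref{contract}), and then compose. The paper's treatment of the first inequality is a bit more direct than your pivot computation---it simply observes that an elementary lift of $M(A_1)$ is $M\!\left(\begin{smallmatrix} v\\ A_1\end{smallmatrix}\right)$, which equals $M\!\left(\begin{smallmatrix} 0\\ A_1\end{smallmatrix}\right)$ plus the rank-$1$ matrix $\left(\begin{smallmatrix} v\\ 0\end{smallmatrix}\right)$, and handles projection by the symmetry of $\pert$---whereas you explicitly address the subadditivity of $\pert$ needed to chain the per-step bounds, a point the paper passes over silently.
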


\begin{proof}
 A rank-$(\leq t)$ perturbation of a represented matroid $M_1$ can be obtained by successively adding $t$ rank-$1$ matrices to some matrix $A_1$ with $M(A_1)=M_1$. Therefore, we can prove this result inductively by considering the behavior of elementary lifts, elementary projections, and rank-$1$ perturbations. An elementary lift of $M_1$ can be obtained by adding the rank-$1$ matrix $\left[\begin{array}{c}
v\\
\hline
0
\end{array}
\right]$, for some vector $v$, to the matrix $\left[\begin{array}{ccc}
0&\cdots&0\\
\hline
&A_1&
\end{array}
\right]$, which represents $M_1$. Thus, every elementary lift of a represented matroid is also a rank-$1$ perturbation of the represented matroid. Now, since $M_1$ is a rank-($\leq t$) perturbation of $M_2$ if and only if $M_2$ is a rank-($\leq t$) perturbation of $M_1$ and since $M_1$ is an elementary lift of $M_2$ if and only if $M_2$ is an elementary projection of $M_1$, we also have that every elementary projection of a represented matroid is a rank-$1$ perturbation of the represented matroid. The converse of these statements is not true in general; however, we will show that every rank-$1$ perturbation of a represented matroid can be obtained by performing an elementary lift followed by an elementary projection.

Suppose that $M_2$ is a rank-$1$ perturbation of $M_1$. By Remark \ref{contract}, there are vectors $v$ and $w$ and a matrix $A_1$ with $M(A_1)=M_1$ such that $M_2$ is obtained from the matrix $A=\left[
\begin{array}{ccc|c}
&v& & -1 \\
\hline
&A_1& & w\\
\end{array}
\right]$ by contracting the element represented by the last column. The represented matroid obtained from $A'=\left[\begin{array}{ccc}
&v&\\
\hline
&A_1&
\end{array}
\right]$ is an elementary lift of $M_1$. Since $M_2$ is obtained from $M(A)$ by contracting the element represented by the last column, $M_2$ is an elementary projection of $M(A')$.

The fact that a rank-$1$ perturbation can be obtained by at least one elementary lift or projection implies that $\pert(M_1,M_2)\leq\dist(M_1,M_2)$. The fact that a rank-$1$ perturbation can be obtained by at most two elementary lifts and projections implies that $\dist(M_1,M_2)\leq2\pert(M_1,M_2)$.
\end{proof}

In order to prove our main result, we will need some lemmas regarding duality. The first lemma is an easy corollary of Lemma \ref{pert&dist}. In fact, the following lemma still holds if $2t$ is replaced by $t$, but that best possible result is not necessary for our purposes.

\begin{lemma}
 \label{dualperturbations}
Suppose that $M_2$ is a rank-$(\leq t)$ perturbation of $M_1$. Then $M^*_2$ is a rank-$(\leq 2t)$ perturbation of $M^*_1$.
\end{lemma}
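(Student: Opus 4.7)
The natural plan is to route everything through the distance function $\dist$ and use Lemma \ref{pert&dist} in both directions, exploiting the fact that duality swaps elementary lifts and projections while preserving their count.

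First I would observe the duality correspondence between elementary lifts and projections: if $M$ is a represented matroid on $E \cup \{e\}$, then $(M \backslash e)^* = M^* / e$ and $(M / e)^* = M^* \backslash e$. Consequently, $M_1$ is an elementary lift of $M_2$ if and only if $M_1^*$ is an elementary projection of $M_2^*$, and an analogous statement holds with lifts and projections reversed. Hence any sequence of $k$ elementary lifts and projections transforming $M_1$ into $M_2$ induces, by dualizing each intermediate represented matroid, a sequence of $k$ elementary projections and lifts transforming $M_1^*$ into $M_2^*$. This gives the identity
\[ \dist(M_1^*, M_2^*) = \dist(M_1, M_2). \]

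Given this, the lemma follows immediately by sandwiching $\pert$ between $\dist$ on both sides. Assuming $\pert(M_1, M_2) \leq t$, Lemma \ref{pert&dist} yields $\dist(M_1, M_2) \leq 2t$. Applying the duality identity gives $\dist(M_1^*, M_2^*) \leq 2t$, and then the other inequality of Lemma \ref{pert&dist} gives $\pert(M_1^*, M_2^*) \leq \dist(M_1^*, M_2^*) \leq 2t$, as desired.

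There is essentially no obstacle here beyond verifying the duality swap for elementary lifts and projections, which is a standard consequence of the fact that deletion and contraction are dual operations on a single extension element. The mild looseness (the factor of $2$ rather than $1$) is precisely what is lost in the two applications of Lemma \ref{pert&dist}; the remark in the paper that the sharper bound $t$ is actually attainable would require a more direct construction of the dual perturbing matrix from the primal one, but this is not needed for the argument.
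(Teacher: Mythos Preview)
Your proof is correct and follows essentially the same approach as the paper: the paper's proof is the one-line chain $\pert(M_1^*,M_2^*)\leq\dist(M_1^*,M_2^*)=\dist(M_1,M_2)\leq2\pert(M_1,M_2)\leq 2t$, invoking Lemma~\ref{pert&dist} twice and the duality of elementary lifts and projections, which is exactly what you have written out in more detail.
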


\begin{proof}
By Lemma \ref{pert&dist} and duality of elementary lifts and elementary projections, we have
 \[\pert(M_1^*,M_2^*)\leq\dist(M_1^*,M_2^*)=\dist(M_1,M_2)\leq2\pert(M_1,M_2)\leq 2t.\]
\end{proof}

Let $\varepsilon(M)=|si(M)|$; that is, $\varepsilon(M)$ is the number of rank-$1$ flats of $M$. The next lemma, proved by Nelson and Walsh ~\cite{nw17}, gives a bound on $\varepsilon(M)$, when $M$ is the dual of a frame matroid. We use this and Lemma \ref{sizedifference} to prove Lemma \ref{graphicvsframedual} below.

\begin{lemma}[{\cite[Lemma 6.2]{nw17}}]
 \label{framedualbound}
If $M^*$ is a frame matroid, then $\varepsilon(M)\leq 3r(M)$.
\end{lemma}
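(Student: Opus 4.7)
The plan is to realize $N = M^*$ as the frame matroid of a biased graph $G$ on vertex set $V$ and edge set $E = E(M)$, and then reduce the problem to a handshake count on a suitably simplified version of $G$. The key bound coming from the frame structure is $r(N) \leq |V|$, which rearranges to
\[ r(M) \;=\; |E| - r(N) \;\geq\; |E| - |V|, \]
while $\varepsilon(M)$ equals the number of series classes of $N$, which one expects to read off of $G$ as maximal chains of edges passing through vertices of degree exactly $2$.

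First I would discard loops and coloops of $N$, since these translate to coloops and loops of $M$ respectively and affect neither $\varepsilon(M)$ nor $r(M)$. Next I would identify each series class of $N$ with a maximal path in $G$ whose internal vertices all have degree $2$, and then \emph{suppress} every such internal vertex by merging the two incident edges into a single edge (composing gains appropriately). Iterating, this yields a biased graph $G'$ representing a frame matroid $N'$ with $|E(G')| = \varepsilon(M)$ edges, with $r((N')^*) \leq r(M)$, and with minimum degree at least $3$ (away from the degenerate cases mentioned below).

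Finally, the handshake lemma applied to $G'$ gives $2|E(G')| \geq 3|V(G')|$, and combining this with the rank bound above applied to $G'$ in place of $G$,
\[ r(M) \;\geq\; |E(G')| - |V(G')| \;\geq\; \tfrac{1}{3}|E(G')| \;=\; \tfrac{1}{3}\varepsilon(M), \]
which rearranges to the desired inequality $\varepsilon(M) \leq 3 r(M)$.

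The main obstacle I anticipate lies in the ``suppress degree-$2$ vertex'' step and in verifying that series classes of $N$ really do correspond to chains in $G$ of this form. This requires a precise understanding of when two edges of a frame matroid form a $2$-cocircuit, which in the biased-graph language involves balanced versus unbalanced cycles and loops at vertices, and is more delicate than in the purely graphic case. A handful of small degenerate configurations (for instance when $G$ collapses under suppression to a single loop, or when $r(M)$ is very small) will need to be checked by hand to confirm the inequality there. Once those structural points are settled, the remainder of the argument is just the elementary handshake count above.
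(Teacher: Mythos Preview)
The paper does not give its own proof of this lemma; it is quoted verbatim from Nelson and Walsh \cite[Lemma~6.2]{nw17} and used as a black box, so there is no in-paper argument to compare against.

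Your outline is essentially correct, but one claim needs adjusting. You assert that after suppressing all degree-$2$ vertices you obtain $|E(G')| = \varepsilon(M)$, i.e.\ that the series classes of $N = M^*$ are exactly the maximal paths through degree-$2$ vertices. This fails in general: a $2$-cocircuit of a frame matroid can come from a $2$-edge cut of $G$ that does not pass through any degree-$2$ vertex. What \emph{is} true is the inequality $|E(G')| \geq \varepsilon(M)$, since each suppression merges two edges that lie in a common series class (this is precisely the step you flag as delicate, and it does go through: if $v$ has degree $2$ with incident edges $e,f$ and neither is a coloop, then deleting both isolates $v$ and hence strictly increases the number of balanced components, so $\{e,f\}$ is a cocircuit). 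Fortunately the inequality is all you need, and your final chain becomes
\[
r(M) \;\geq\; |E(G')| - |V(G')| \;\geq\; \tfrac{1}{3}|E(G')| \;\geq\; \tfrac{1}{3}\varepsilon(M).
\]

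A slightly cleaner variant avoids the suppression process entirely: pass directly to $N'' = \mathrm{co}(N)$, which is still a represented frame matroid since that class is minor-closed, and observe that any frame representation of a cosimple, coloop-free matroid has minimum degree at least $3$ once isolated vertices are discarded (a degree-$1$ vertex forces a coloop, a degree-$2$ vertex forces a $2$-cocircuit). Then $|E(N'')| = \varepsilon(M)$ on the nose and the handshake count finishes exactly as you wrote.
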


Although \cite{nw17} was based on \cite{ggw15}, the previous lemma was proved independently of \cite{ggw15} and remains accurate for that reason.

\begin{lemma}
 \label{sizedifference}
If $M$ is a rank-($\leq t$) perturbation of a $\mathrm{GF}(q)$-represented matroid $N$, then $\varepsilon(M)\leq q^t\varepsilon(N)+\sum_{i=0}^{t-1} q^i$.
\end{lemma}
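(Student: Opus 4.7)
The plan is to work directly with the representations. Fix matrices $A_N$ over $\mathrm{GF}(q)$ with $M(A_N)=N$, and $P$ with $\mathrm{rank}(P)\le t$ and $M(A_N+P)=M$; let $W$ denote the column space of $P$, a subspace of $\mathrm{GF}(q)^r$ of dimension at most $t$. The crucial observation is that, since each column of $P$ lies in $W$, the $e$-th columns $\alpha_e$ of $A_N+P$ and $a_e$ of $A_N$ differ by an element of $W$; hence they have the same image under the quotient map $\pi\colon \mathrm{GF}(q)^r\to \mathrm{GF}(q)^r/W$.

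Next, I would partition the parallel classes of $M$ (which correspond to the $1$-dimensional subspaces $\langle\alpha_e\rangle$ arising from non-loop elements $e$) into two groups. Classes $L$ with $L\subseteq W$ are $1$-dimensional subspaces of $W$, and so number at most $(q^{\dim W}-1)/(q-1)\le \sum_{i=0}^{t-1}q^i$, matching the additive term in the bound. For each remaining class $L$, $\pi(L)$ is a nonzero line in $\mathrm{GF}(q)^r/W$; and since $\pi(\alpha_e)=\pi(a_e)$ and $a_e\notin W$ whenever $\alpha_e\notin W$, this same line arises as $\pi(\langle a_e\rangle)$ from some non-loop parallel class of $N$. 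Consequently the set of images produced in this way has size at most $\varepsilon(N)$.

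The last ingredient is a line count: for any nonzero line $\bar L\subseteq \mathrm{GF}(q)^r/W$, the preimage $\pi^{-1}(\bar L)$ is a subspace of dimension $\dim W+1$, so it contains $(q^{\dim W+1}-1)/(q-1)$ one-dimensional subspaces in total, of which $(q^{\dim W}-1)/(q-1)$ lie inside $W$, leaving exactly $q^{\dim W}\le q^t$ outside $W$. This gives the per-image bound $q^t$. Multiplying by the at-most-$\varepsilon(N)$ bound on the number of images, and adding the bound for classes inside $W$, yields the inequality. I do not anticipate any significant obstacle; the proof reduces to elementary projective line-counting once the identity $\pi(\alpha_e)=\pi(a_e)$ is in hand.
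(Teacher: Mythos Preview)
Your argument is correct, and it takes a genuinely different route from the paper's. The paper proceeds by induction on $t$: it shows that a rank-$1$ perturbation sends each rank-$1$ flat of $M'$ into at most $q$ rank-$1$ flats of $M$ (with loops contributing one extra), yielding $\varepsilon(M)\le q\,\varepsilon(M')+1$, and then iterates. You instead work globally: you quotient by the column space $W$ of the perturbing matrix $P$, observe that $\pi(\alpha_e)=\pi(a_e)$ for every element $e$, and then do a projective line count in the $(\dim W+1)$-dimensional preimage of each nonzero line in the quotient. Your approach is more geometric and handles all of $t$ at once; the paper's approach is more hands-on and column-by-column but requires tracking the bound through $t$ iterations. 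Both yield the same inequality, and your fiber count $q^{\dim W}\le q^t$ is exactly the multiplicative factor the paper accumulates inductively.
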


\begin{proof}
 We proceed by induction on $t$. If $t=0$, then $M=N$, and the result is clear. Now suppose the result holds for rank-($\leq t'$) perturbations for all $t'<t$. Since $M$ is a \rtp
of $N$ there is some represented matroid $M'$ such that $M'$ is a rank-($\leq t-1$) perturbation of $N$ and $M$ is a rank-($\leq 1$) perturbation of $M'$. Thus, there are matrices $A$ and $P$ such that $M'=M(A)$, the rank $P$ is $1$, and $M=M(A+P)$. We will show that the nonloop elements in a rank-$1$ flat of $M'$ become members of at most $q$ distinct rank-$1$ flats in $M$. Let $\{a_1,a_2,\ldots,a_{q-1},a_q\}$ be the elements of $\mathrm{GF}(q)$, with $a_q=0$, and let $v$ be a nonzero column in $A$ indexed by an element in a rank-$1$ flat $F$ of $M'$. Then the nonloop elements of $F$ are each represented by a column $a_iv$ for some $i$ such that $1\leq i\leq q-1$. Similarly, let $w$ be a nonzero column of $P$. Then every column of $P$ is represented by a column $a_iw$ for some $i$ such that $1\leq i\leq q$. Thus, every element in $F$ that is not a loop in $M'$ will be represented in $A+P$ by a column of the form $a_iv+a_jw=a_i(v+a_i^{-1}a_jw)$, where $1\leq i\leq q-1$ and $1\leq j\leq q$. There are $q$ distinct possible values for $a_i^{-1}a_j$; therefore, the elements of $F$ that are not loops in $M'$ are in at most $q$ distinct rank-$1$ flats in $M$. Moreover, after $P$ is added to $A$, loops in $M'$ will be represented by columns of the form $a_jw$. This accounts for one additional rank-$1$ flat in $M$. Thus, $\varepsilon(M)\leq q\varepsilon(M')+1$. By the induction hypothesis, we have $\varepsilon(M)\leq q(q^{t-1}\varepsilon(N)+\sum_{i=0}^{t-2} q^i)+1=q^t\varepsilon(N)+\sum_{i=0}^{t-1} q^i$, which proves the result.
\end{proof}

\begin{lemma}
\label{graphicvsframedual}
 Let $t$ be a positive integer, and let $\mathbb{F}=\mathrm{GF}(q)$. Then there are finitely many integers $r$ such that the complete graphic matroid $M(K_{r+1})$ is a rank-($\leq t$) perturbation of the dual of an $\mathbb{F}$-represented frame matroid.
\end{lemma}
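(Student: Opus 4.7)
The plan is to compare the number of rank-$1$ flats of $M(K_{r+1})$, which grows quadratically in $r$, with the upper bound forced by the perturbation hypothesis, which will only grow linearly in $r$.

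First I would record the obvious fact that $M(K_{r+1})$ is simple with $r(M(K_{r+1})) = r$, so
\[ \varepsilon(M(K_{r+1})) = \binom{r+1}{2} = \frac{r(r+1)}{2}. \]
Now suppose $N$ is an $\mathbb{F}$-represented matroid with $N^*$ a frame matroid such that $M(K_{r+1})$ is a rank-$(\leq t)$ perturbation of $N$. Applying Lemma \ref{sizedifference} gives
\[ \varepsilon(M(K_{r+1})) \leq q^t\,\varepsilon(N) + \sum_{i=0}^{t-1} q^i, \]
and Lemma \ref{framedualbound} applied to $N^*$ gives $\varepsilon(N) \leq 3\,r(N)$.

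Next I would bound $r(N)$ in terms of $r$. If $A_1$ represents $N$ and $A_1 + P$ represents $M(K_{r+1})$ with $\mathrm{rank}(P)\leq t$, then $r(N) = \mathrm{rank}(A_1) \leq \mathrm{rank}(A_1+P) + \mathrm{rank}(P) \leq r+t$. Combining these inequalities yields
\[ \frac{r(r+1)}{2} \leq 3q^t(r+t) + \sum_{i=0}^{t-1} q^i. \]
The left-hand side is quadratic in $r$ while the right-hand side is linear in $r$ (with $q$ and $t$ fixed), so the inequality fails for all sufficiently large $r$. Hence only finitely many integers $r$ satisfy the hypothesis, as required.

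The argument is essentially a counting obstruction, and the only mildly delicate point is the rank comparison $r(N)\leq r+t$; everything else is a direct assembly of Lemmas \ref{sizedifference} and \ref{framedualbound}. I do not expect any serious obstacle; the proof should fit in a few lines.
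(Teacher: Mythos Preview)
Your proposal is correct and follows essentially the same approach as the paper: combine Lemma~\ref{framedualbound} and Lemma~\ref{sizedifference}, bound $r(N)$ in terms of $r$ via the rank of the perturbation matrix, and compare the resulting linear upper bound with the quadratic $\varepsilon(M(K_{r+1}))=\binom{r+1}{2}$. The only difference is that you derive $r(N)\leq r+t$ (the direction actually needed for the upper bound), whereas the paper writes the symmetric inequality $r(M)\leq r(N)+t$ and then substitutes $r-t$; your version is the correct one to feed into the estimate, though the conclusion is the same either way.
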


\begin{proof}
Suppose $M$ is a \rtp of an $\mathbb{F}$-represented matroid $N$, and let $r=r(M)$. Combining the previous two lemmas, we have $\varepsilon(M)\leq q^t(3r(N))+\sum_{i=0}^{t-1} q^i$. Since $M$ is a \rtp of $N$, we have $r(M)\leq r(N)+t$. Therefore, $\varepsilon(M)\leq 3q^t(r-t)+\sum_{i=0}^{t-1} q^i$. Since $q$ and $t$ are constant, this expression is less than $\binom{r+1}{2}=\varepsilon(M(K_{r+1}))$ for all sufficiently large $r$.
\end{proof}

\begin{notation}
\label{function}
 Let $g(q,t)$ be the least value $n$ such that for all $n'\geq n$, the complete graphic matroid $M(K_{n'})$ is not represented by any represented matroid that is a rank-($\leq t$) perturbation of the dual of a represented frame matroid over $\mathrm{GF}(q)$.
\end{notation}

Lemma \ref{graphicvsframedual} can be restated as saying that $g(q,t)$ is finite for every prime power $q$ and positive integer $t$.

As stated in the introduction, our construction makes use of highly cyclically connected graphs. The next two results allow us to specify some additional details about these graphs. The following lemma seems to be fairly well known; however, we were unable to find an explicit proof in the literature. For the sake of completeness, we state the result and provide a proof, obtained by combining some older results.

\begin{lemma}
 \label{cubic}
For every positive integer $k$, there is a cyclically $k$-connected cubic graph.
\end{lemma}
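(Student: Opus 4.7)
The plan is to combine two classical results. The first is the theorem of Erd\H{o}s and Sachs (1963) that for every integer $g\geq 3$ there exists a cubic graph of girth at least $g$. The second is a form of edge expansion: one wants a cubic graph $G$ of girth at least $2k$ such that every edge cut of size less than $k$ has smaller side of size less than $2k$. Graphs satisfying both requirements exist; one can appeal either to the explicit Ramanujan cubic graphs of Lubotzky, Phillips, and Sarnak, or to the random-cubic-graph results of Bollob\'as and Wormald. Some expansion really is needed here: gluing two cubic graphs of high girth along a small matching produces a cubic graph whose girth is as large as one likes but whose cyclic edge-connectivity is bounded, so girth alone does not suffice.

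Given such a $G$, the conclusion is almost a one-liner. Let $C$ be any edge cut of $G$ with $|C|<k$ and let $(A,B)$ be the resulting partition of $V(G)$ with $|A|\leq|B|$. By the expansion property, $|A|<2k$. Every cycle of $G$ has length at least $2k$ by the girth condition, so $G[A]$ contains no cycle at all; hence $C$ is not a cyclic edge cut. Since this applies to every edge cut of size less than $k$, the graph $G$ is cyclically $k$-connected, as desired.

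The main obstacle is producing a single cubic graph with both the girth and the expansion property simultaneously: the Erd\H{o}s--Sachs construction gives no direct control over small cuts, and the matching-glueing example shows that the two properties are logically independent. Fortunately both explicit and random constructions are available in the literature, and in either case the parameters can be tuned as functions of $k$ to guarantee that any edge cut of size less than $k$ isolates strictly fewer than $2k$ vertices, which is all that is required.
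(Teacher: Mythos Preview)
Your strategy is sound but takes a different route from the paper's, and one citation needs correcting.

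The paper's proof is essentially a two-liner: there exist cubic Cayley graphs of girth $g\geq k$ (Biggs; Jajcay--\v Sir\'a\v n), and since Cayley graphs are vertex-transitive, a theorem of Nedela and \v Skoviera gives that a vertex-transitive cubic graph has cyclic edge-connectivity equal to its girth. Vertex-transitivity does all the work that your expansion hypothesis does, so no isoperimetric input is required at all.

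Your girth-plus-expansion argument is a legitimate alternative, and the matching-glued example correctly shows that girth alone is insufficient. Two corrections, though. The Lubotzky--Phillips--Sarnak graphs are $(p+1)$-regular for an odd prime $p$ and are therefore never cubic; for cubic Ramanujan graphs you need Morgenstern's extension to prime powers, or else commit fully to the random-cubic-graph route. Also, the Cheeger-type bound obtained from Ramanujan expansion yields only $|A|<Ck$ with $C$ considerably larger than $2$, so the girth threshold must be taken as $Ck$ rather than $2k$; your final paragraph allows tunable parameters, but the middle paragraph asserts the constant $2$. Overall, the paper's approach is cleaner, needing only a cage-type construction plus one structural citation; yours trades the Nedela--\v Skoviera theorem for expander machinery.
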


\begin{proof}
 There is a cubic Cayley graph of girth $g\geq k$. (See, for example, Biggs ~\cite{b89} or Jajcay and \v Sir\'a\v n ~\cite[Theorem 2.1]{js11}. In particular, ~\cite{js11} contains a nice summary of various related results.) Since Cayley graphs are vertex-transitive, a result of Nedela and \v Skoviera ~\cite[Theorem 17]{ns95} states that such a graph has cyclic connectivity $g$.
\end{proof}

Thomassen ~\cite[Corollary 3.2]{t83} showed the following.

\begin{theorem}
\label{thomassen}
 There is a function $\xi$ such that a graph $G$ with minimum degree at least $3$ and girth at least $\xi(n)$ has a minor isomorphic to $K_n$.
\end{theorem}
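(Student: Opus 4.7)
The plan is to combine two classical ingredients: a density-based clique-minor bound, which guarantees a $K_n$ minor from sufficiently large minimum degree, and a girth-driven ``degree boosting'' construction that trades huge girth for large minimum degree in a minor. Fix Mader's function $f(n)$ (for instance $f(n)=2^{n-2}$) such that any graph with minimum degree at least $f(n)$ contains a $K_n$ minor. Then it is enough to exhibit a function $\xi$ so that every graph $G$ with minimum degree $\geq 3$ and girth $\geq \xi(n)$ has a minor of minimum degree $\geq f(n)$.

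The heart of the argument is a boosting lemma: for every $d\geq 3$ and $g\geq 3$ there is $h(d,g)$ such that any graph with minimum degree $\geq 3$ and girth $\geq h(d,g)$ admits a minor with minimum degree $\geq d$ and girth $\geq g$. I would prove it by induction on $d$. Given such a $G$, greedily pack a maximal collection of pairwise disjoint BFS balls $B_1,\dots,B_m$ of a common radius $r$, where $2r$ is small relative to the current girth so that each $B_i$ is a tree. The minimum-degree-$3$ hypothesis, applied to the internal vertices of the BFS tree, forces each $B_i$ to have at least $3\cdot 2^{r-1}$ leaves, each of which has at least one neighbour outside $B_i$; contracting each $B_i$ to a single vertex therefore produces a branch vertex of degree at least $d$ once $r$ is chosen large enough. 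A short cycle in the contracted graph $G'$ lifts to a closed walk in $G$ that gains at most $2r$ additional length per contracted ball it traverses, so starting from girth $\geq g + Cr$ for a small absolute constant $C$ leaves $G'$ with girth $\geq g$.

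Iterating the boost $O(\log f(n))$ times raises the minimum degree from $3$ to above $f(n)$. The girth requirement composes at each step: define $\xi(n)$ as this compound girth bound. Applying Mader's result to the final minor then yields a $K_n$ minor, completing the proof.

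The main obstacle is calibrating the radius $r$ in the boosting lemma against the girth at each iteration: $r$ must be large enough that the roughly $3\cdot 2^{r-1}$ leaves per ball deliver the required degree gain, yet small enough that the balls remain trees and enough girth survives to fuel the next boost. A secondary nuisance is that a greedy ball packing can leave uncovered vertices behind whose degree in the minor might dip below $d$; this is handled either by absorbing uncovered vertices into neighbouring balls, or by noting that, by maximality of the packing, every uncovered vertex has a neighbour inside some $B_i$ and therefore inherits enough edges into the contracted vertex set.
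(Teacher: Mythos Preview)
The paper does not supply its own proof of this statement: Theorem~\ref{thomassen} is quoted from Thomassen~\cite[Corollary 3.2]{t83} and used as a black box in the construction of Section~\ref{sec:the_construction}. So there is no in-paper argument to compare against.

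That said, your outline is essentially Thomassen's own strategy: boost minimum degree at the cost of girth by contracting disjoint bounded-radius trees, iterate until the minimum degree exceeds Mader's threshold, then invoke Mader. The core ideas are right and would succeed. Two places deserve more care before this is a proof. First, your treatment of vertices missed by the greedy ball packing is not yet adequate: the observation that an uncovered vertex has a neighbour in some $B_i$ does not by itself give that vertex degree $\geq d$ in the contracted graph. The standard fix is to enlarge each ball to absorb all uncovered vertices within distance $r$ of its centre (maximality guarantees every vertex is covered), which keeps each piece a tree of radius at most $2r$ and leaves the leaf count and girth bookkeeping intact up to constants. Second, your girth estimate ``gains at most $2r$ per ball traversed'' needs the cycle in $G'$ to have length at least $3$, which is immediate, but you should also check that distinct edges of $G'$ lift to internally disjoint paths in $G$ so that the lifted walk really contains a cycle; this holds because the balls are trees and the edges between them are genuine edges of $G$. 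With these two points tightened, your argument goes through and recovers Thomassen's result.
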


Finally, we clarify some notation and terminology, following ~\cite{nw17}. These will be used in Sections \ref{sec:discussion} and \ref{sec:refined_templates}. If $\mathbb{F}$ is a field and $A\cap B=\emptyset$, then we identify the vector space $\mathbb{F}^A\times\mathbb{F}^B$ with $\mathbb{F}^{A\cup B}$. If $U\subseteq\mathbb{F}^E$ and $X\subseteq E$, then $U[X]$ is the set of vectors consisting of all coordinate projections of $u$ onto $X$ for $u\in U$. If $\Gamma\subseteq\mathbb{F}$, then $\Gamma U=\{\gamma u|\gamma\in\Gamma, u\in U\}$. If $U$ and $W$ are additive subgroups of $\mathbb{F}^E$, then $U$ and $W$ are \emph{skew} if $U\cap W=\{0\}$.

\section{The Construction}
\label{sec:the_construction}

Our construction involves repeated use of the generalized parallel connection of a matroid with copies of $M(K_5)$ over a copy of $M(K_4)$ represented in a specific way. The next two results specify that representation. Both results are easily checked, so we state them without proof.

\begin{lemma}
\label{K5matrix}
 The following matrix represents $M(K_5)$ over all fields of characteristic other than $2$:

\[A=\begin{bmatrix}
1&1&0&0&1&1&0&1&0&1\\
-1&1&1&1&0&0&0&1&1&0\\
0&0&-1&1&-1&1&0&0&1&1\\
0&0&0&0&0&0&1&1&1&1\\
\end{bmatrix}.\]
\end{lemma}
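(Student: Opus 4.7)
The lemma is a direct matrix verification, and my plan is to transform $A$ into a recognizable form. Specifically, I would take the last four columns $C_7, C_8, C_9, C_{10}$ of $A$ as a basis of $M(A)$ and express each of $C_1, \ldots, C_6$ in this basis.

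The rank check is immediate: the $4 \times 4$ submatrix on $\{C_7, C_8, C_9, C_{10}\}$ has determinant $\pm 2$, which is nonzero in any field of characteristic other than $2$. Hence $r(M(A)) = 4 = r(M(K_5))$, and these four columns form a basis. A short computation then yields the integer identities
\[
C_1 = C_{10}-C_9, \quad C_2 = C_8-C_7, \quad C_3 = C_8-C_{10},
\]
\[
C_4 = C_9-C_7, \quad C_5 = C_8-C_9, \quad C_6 = C_{10}-C_7,
\]
each of which is verifiable by direct subtraction of columns in $\mathbb{Z}^4$, and therefore valid over any field.

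Under the bijection $C_7 \leftrightarrow v_4v_5$, $C_8 \leftrightarrow v_3v_5$, $C_9 \leftrightarrow v_1v_5$, $C_{10} \leftrightarrow v_2v_5$, $C_1 \leftrightarrow v_1v_2$, $C_2 \leftrightarrow v_3v_4$, $C_3 \leftrightarrow v_2v_3$, $C_4 \leftrightarrow v_1v_4$, $C_5 \leftrightarrow v_1v_3$, $C_6 \leftrightarrow v_2v_4$ between columns of $A$ and edges of $K_5$ on vertex set $\{v_1,v_2,v_3,v_4,v_5\}$, the six identities above exhibit $C_1, \ldots, C_6$ (up to sign) as exactly the corresponding columns of the reduced incidence matrix of $K_5$ with $v_5$ chosen as the ground vertex. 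Since the reduced incidence matrix of a graph represents its cycle matroid over every field and column scaling by $\pm 1$ does not change the represented matroid, it follows that $M(A) = M(K_5)$ over every field of characteristic different from $2$. The main --- and only --- obstacle is the bookkeeping: six small linear-algebra computations and recognition of the result as a reduced incidence matrix. This is exactly why the authors describe the verification as easily checked.
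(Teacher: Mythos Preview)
Your proof is correct. The paper itself omits any argument here, stating only that the result is ``easily checked''; your change-of-basis computation (reducing $A$ to a reduced incidence matrix of $K_5$ via the invertible block $[C_7\,C_8\,C_9\,C_{10}]$, whose determinant $-2$ is a unit precisely when the characteristic is not $2$) is exactly the kind of verification the authors had in mind, and the six column identities and the edge bijection all check out.
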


\begin{lemma}
\label{lem:K4signedgraph}
 The signed graph shown in Figure \ref{M(K_4)}, 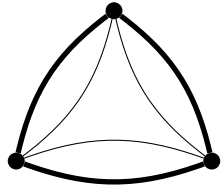
\begin{figure}
\[\begin{tikzpicture}[x=1.3cm, y=1cm,
    every edge/.style={
        draw
        }
]
	\vertex[fill] (1) at (0,0) {};
	\vertex[fill] (2) at (2,0) {};
	\vertex[fill] (3) at (1,2) {};
	\path
		(1) edge[bend right=20, line width=2.0pt] (2)
		(2) edge[bend right=20] (1)
		(3) edge[bend right=20] (2)
		(2) edge[bend right=20, line width=2.0pt] (3)
		(1) edge[bend right=20] (3)
		(3) edge[bend right=20, line width=2.0pt] (1)
	;
\end{tikzpicture}\]
\caption{A signed-graphic representation of $M(K_4)$}
  \label{M(K_4)}
\end{figure}with negative edges printed in bold, represents $M(K_4)$.

\end{lemma}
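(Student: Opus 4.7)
The plan is to write down the standard signed-incidence representation $B$ of the signed graph $\Sigma$ in Figure~\ref{M(K_4)} and check that the resulting vector matroid agrees with $M(K_4)$. Label the three vertices $1,2,3$ and, for each pair $\{i,j\}$, write $e_{ij}^{+}$ for the thin edge and $e_{ij}^{-}$ for the bold edge. Over any field of characteristic other than $2$, the frame matroid of $\Sigma$ is represented by the $3\times 6$ matrix whose column for $e_{ij}^{+}$ is $\chi_i-\chi_j$ and whose column for $e_{ij}^{-}$ is $\chi_i+\chi_j$. This matrix clearly has rank $3$, matching $r(M(K_4))=3$, so it remains to verify that the circuits of $M(B)$ coincide with those of $M(K_4)$.

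To do this, I would invoke the standard description of the circuits of a signed-graphic matroid: they are the balanced (positive) cycles together with the handcuffs, where a handcuff is either two unbalanced cycles sharing exactly one vertex or two vertex-disjoint unbalanced cycles connected by an internally disjoint path. Because $\Sigma$ has only three vertices and no loops, no two unbalanced cycles can be vertex-disjoint, so the loose-handcuff case cannot occur. The balanced cycles of $\Sigma$ are exactly the four positive triangles (the triangles with an even number of bold edges), yielding four $3$-element circuits. For tight handcuffs, note that any two negative triangles share all three vertices, and any negative triangle shares both vertices of every negative digon; hence the only tight handcuffs are pairs of negative digons meeting in a single vertex, of which there are exactly three (one for each choice of shared vertex). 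Thus $M(B)$ has $4$ three-element circuits and $3$ four-element circuits, matching the $4$ triangles and $3$ Hamilton $4$-cycles of $M(K_4)$ exactly.

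To upgrade this numerical match to an actual isomorphism, I would fix an explicit bijection: with $V(K_4)=\{0,1,2,3\}$, send each edge $ij$ with $i,j\ne 0$ to $e_{ij}^{+}$ and each edge $0i$ to $e_{jk}^{-}$, where $\{j,k\}=\{1,2,3\}\setminus\{i\}$. Under this bijection one checks directly that each triangle of $K_4$ becomes a positive triangle of $\Sigma$ (one with $0$ or $2$ bold edges) and each Hamilton $4$-cycle of $K_4$ becomes a tight handcuff formed from two negative digons; the matroid isomorphism then follows from the agreement of the circuit families. The only real obstacle is keeping the sign convention for bold versus thin edges consistent with the incidence matrix, but this is purely bookkeeping on a tiny signed graph, which is presumably why the authors state the lemma without proof.
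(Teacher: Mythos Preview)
Your argument is correct. The paper itself gives no proof of this lemma, stating only that it is ``easily checked''; your proposal is a clean way to carry out that check, and the explicit bijection you write down (sending $ij\mapsto e_{ij}^{+}$ for $i,j\neq 0$ and $0i\mapsto e_{jk}^{-}$) does indeed take the four triangles of $K_4$ to the four positive triangles of $\Sigma$ and the three Hamilton $4$-cycles to the three digon--digon tight handcuffs.
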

This representation of $M(K_4)$ has been encountered before, for example in \cite{z90,g90,sq07}.

\begin{definition}
 \label{def:ornamentation}
Let $G$ be a cubic graph, and $R\subseteq V(G)$. For each vertex $v_i$ of $R$, perform the operation of altering the graph on the left in Figure \ref{GtoG'} \begin{figure}
\[\begin{tikzpicture}[x=.8cm, y=.5cm,
    every edge/.style={
        draw
        }
]
	\vertex[fill] (v_i) at (1,2) [label=left:$v_i$]{};
	\vertex[fill] (1) at (0,0) [label=left:$x$]{};
	\vertex[fill] (2) at (2,0) [label=right:$y$]{};
	\vertex[fill] (3) at (1,4) [label=above:$z$]{};
	\vertex[fill] (4) at (5,0) [label=left:$x$]{};
	\vertex[fill] (5) at (9,0) [label=right:$y$]{};
	\vertex[fill] (6) at (7,5) [label=above:$z$]{};
	\vertex[fill] (7) at (6,1) {};
	\vertex[fill] (8) at (8,1) {};
	\vertex[fill] (9) at (7,3) {};
	\path
		(7) edge[bend right=20, line width=2.0pt] (8)
		(8) edge[bend right=20] (7)
		(9) edge[bend right=20] (8)
		(8) edge[bend right=20, line width=2.0pt] (9)
		(7) edge[bend right=20] (9)
		(9) edge[bend right=20, line width=2.0pt] (7)
		(1) edge (v_i)
		(2) edge (v_i)
		(3) edge (v_i)
		(4) edge (7)
		(5) edge (8)
		(6) edge (9)
	;
\end{tikzpicture}\]
\caption{Changing $G$ to $G'$}
  \label{GtoG'}
\end{figure}
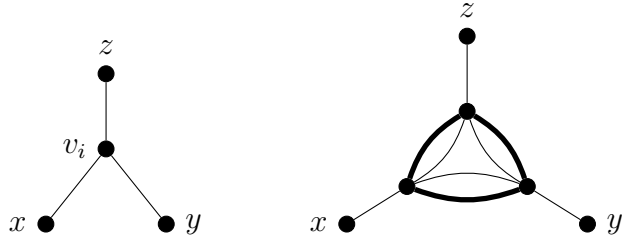to become the signed graph on the right, with negative edges printed in bold. Let $G'$ be the signed graph that results from performing this operation on every vertex in $R$. Note that $G'$ contains $|R|$ copies of the signed-graphic representation of $M(K_4)$ described in Lemma \ref{lem:K4signedgraph}. Let $X_1,X_2,\dots, X_{|R|}$ be the edge sets of these representations of $M(K_4)$. For each $X_i$, take the generalized parallel connection\footnote{Each $X_i$ is a modular flat of $M(K_5)$, which is uniquely representable over any field. Therefore, these generalized parallel connections are well defined.} of $M(G')$ with a copy of $M(K_5)$ over $X_i$. Delete the $X_i$, and call the resulting matroid the \emph{ornamentation} of $(G,R)$, denoted by $Or(G,R)$.
\end{definition}

\begin{lemma}
 \label{lem:dyadic}
For any cubic graph $G$, with $R\subseteq V(G)$, the ornamentation $Or(G,R)$ is dyadic and has $M(G)$ as a minor.
\end{lemma}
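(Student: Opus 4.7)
The plan is to prove the two claims separately.

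For dyadicity, by Theorem \ref{thm:dyadic} it suffices to exhibit representations of $Or(G,R)$ over both $\mathrm{GF}(3)$ and $\mathrm{GF}(5)$. The signed graph $G'$ yields a frame matroid $M(G')$, representable over any field of characteristic different from $2$ via the usual $\pm 1$ signed-incidence matrix; each $X_i$ appears in this representation exactly as the signed-graphic $M(K_4)$ of Lemma \ref{lem:K4signedgraph}. By Lemma \ref{K5matrix}, the matrix $A$ represents $M(K_5)$ over these same fields, and its first six columns realize precisely the same signed-graphic $M(K_4)$. Consequently, each generalized parallel connection $P_{X_i}(M(G'), M(K_5))$ is realized over $\mathrm{GF}(3)$ and $\mathrm{GF}(5)$ by appending the last four columns of $A$ (and a new row) to the matrix for $M(G')$, identifying along the $X_i$ columns. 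Since deletion preserves representability, $Or(G,R)$ is dyadic.

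For the minor claim, for each $v_i\in R$ let $S_i$ denote the four elements of the $i$th attached $M(K_5)$ that lie outside $X_i$; each $S_i$ is a basis of the corresponding $M(K_5)$. A standard property of generalized parallel connections states that if $S$ is a basis of $M_2$, then $P_N(M_1,M_2)/S$ agrees with $M_1/N$ on $E(M_1)\setminus N$. Iterating over the $|R|$ disjoint generalized parallel connections and commuting contraction with the disjoint deletions of the $X_i$ yields
\[
Or(G,R) \,\big/\, \bigcup_{i} S_i \;=\; M(G')\,\big/\,\bigcup_{i} X_i
\]
as a matroid on $E(G')\setminus\bigcup_i X_i$. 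It then suffices to verify that this matroid equals $M(G)$ under the natural edge correspondence sending each $xa_i, yb_i, zc_i$ to $xv_i, yv_i, zv_i$, with all other edges of $G'$ identified with their namesakes in $G$. Because the modifications at distinct vertices of $R$ are pairwise disjoint, this reduces to a local check at each $v_i$: contracting the unbalanced signed $K_4$ on $\{a_i, b_i, c_i\}$ collapses these three vertices to one and restores the local incidence structure of $G$ at $v_i$.

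The main obstacle is this final local identification, which is delicate due to the sign-switching involved in signed-graph contraction. I would handle it by pivoting on the representing matrix: contract a basis of $X_i$ (say two positive edges together with a negative edge of the signed $K_4$), observe that after pivoting the columns for $xa_i, yb_i, zc_i$ become single-nonzero-entry columns (``half-edges'') at $x, y, z$, and then match circuits directly. Cycles of $G$ avoiding $v_i$ correspond to positive cycles of the contracted signed graph, and cycles of $G$ through $v_i$ correspond to circuits formed by two of the resulting half-edges together with a path in $G - v_i$; together these exhaust the circuits and confirm equality with $M(G)$.
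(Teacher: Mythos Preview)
Your proposal is correct, but it diverges from the paper's proof in both halves of the lemma.

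For dyadicity, the paper simply invokes two off-the-shelf facts: signed-graphic matroids are dyadic (Zaslavsky), and generalized parallel connections over a modular flat preserve representability over a partial field (Mayhew--Whittle--Van~Zwam). You instead build the representations over $\mathrm{GF}(3)$ and $\mathrm{GF}(5)$ by hand, matching the first six columns of the $M(K_5)$ matrix of Lemma~\ref{K5matrix} against the signed-incidence columns of $G'$ at each $X_i$ and appending the remaining four columns and one row. This is more elementary and self-contained, at the cost of re-deriving a special case of the cited theorem.

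For the minor claim, the paper writes down the explicit $7\times 7$ block that replaces each star $[\tfrac{1\ 1\ 1}{-I}]$ in the signed incidence matrix of $G$, and checks directly that deleting $g_i$ and contracting $\{d_i,e_i,f_i\}$ returns that block to $[\tfrac{1\ 1\ 1}{-I}]$; no signed-graph reasoning is needed. Your route is more structural: you contract the full basis $S_i=\{d_i,e_i,f_i,g_i\}$ and use the GPC identity $P_N(M_1,M_2)/S \cong M_1/N$ on $E(M_1)\setminus N$ to reduce to showing $M(G')/\bigcup_i X_i \cong M(G)$. This last step is exactly the ``half-edge'' identification you flag as delicate: after contracting each $X_i$ the three incident edges become unit columns at $x,y,z$, and one must argue that three half-edges at distinct vertices have the same circuits with the rest of the graph as three ordinary edges meeting at a single vertex. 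That is true and your sketch handles it, but note that the paper sidesteps this entirely by contracting only three elements and \emph{deleting} the fourth, which restores the star on the nose. Your approach illuminates the role of the GPC more clearly; the paper's is a shorter matrix check.
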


\begin{proof}
It is well-known that signed-graphic matroids are dyadic (see, for example, \cite[Lemma 8A.3]{z82}). The construction of $\Or$ involves generalized parallel connections of a signed graph with copies of $M(K_5)$ over a common representation of $M(K_4)$. Thus, a result of Mayhew, Whittle, and Van Zwam \cite[Theorem 3.1]{mwvz11} implies that $\Or$ is dyadic.

Note that $Or(G,R)$ is the result of taking $|R|$ copies of the submatrix $[\frac{1\hspace{12pt} 1 \hspace{12pt} 1}{-I}]$ of the signed incidence matrix of $G$, with columns indexed by the set $\{1_i,2_i,3_i\}$, and altering each of them to become a copy of the following matrix:
\[\begin{blockarray}{ccccccc}
1_i & 2_i & 3_i & d_i & e_i & f_i & g_i\\
\begin{block}{[ccccccc]}
1&0&0&0&1&0&1\\
0&1&0&0&1&1&0\\
0&0&1&0&0&1&1\\
-1&0&0&0&0&0&0\\
0&-1&0&0&0&0&0\\
0&0&-1&0&0&0&0\\
0&0&0&1&1&1&1\\
\end{block}
\end{blockarray}.\] One can check that in the vector matroid of the above submatrix, deleting $g_i$ and contracting $\{d_i,e_i,f_i\}$ results in the vector matroid of $[\frac{1\hspace{12pt} 1 \hspace{12pt} 1}{-I}]$. Therefore, $M(G)$ is a minor of $Or(G,R)$.
\end{proof}

\begin{definition}
 \label{def:gadget}
In the matrix given in the proof of the previous lemma, let $F_i=\{d_i,e_i,f_i,g_i\}$. We will call each $F_i$ a \textit{gadget}.
\end{definition}

We are now ready to prove our main result.

\begin{theorem}
\label{thm:construction}
For every $k,t\in\mathbb{Z}_+$, there exists a vertically $k$-connected dyadic matroid that is not a rank-$(\leq t)$ perturbation of either a frame matroid or the dual of a frame matroid.
\end{theorem}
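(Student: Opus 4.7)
The candidate is $M := Or(G, V(G))^*$, where $G$ is a cubic graph that is simultaneously cyclically $k$-connected and of girth at least $\xi(g(3, 4t) + 1)$. Such a $G$ exists by Lemma \ref{cubic}, whose proof yields cubic Cayley graphs of arbitrarily large girth with cyclic connectivity equal to the girth. The matroid $M$ is dyadic by Lemma \ref{lem:dyadic} together with the fact that the class of dyadic matroids is closed under duality. Vertical $k$-connectivity of $M$ is equivalent to cyclic $k$-connectivity of $Or(G, V(G))$, which I would derive from the cyclic $k$-connectivity of $G$ by observing that each gadget $F_i$ is glued to the ambient matroid along the modular rank-$3$ flat $X_i$ via only the three boundary elements $1_i, 2_i, 3_i$; a small cyclic separation of $Or(G, V(G))$ must therefore either be contained in a single gadget or reduce to a small cyclic separation of $G$, contradicting the choice of $G$.

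Suppose for contradiction that $M$ is a rank-$(\leq t)$ perturbation of a represented frame matroid. Then Lemma \ref{dualperturbations} gives that $Or(G, V(G))$ is a rank-$(\leq 2t)$ perturbation of the dual of a represented frame matroid. Since rank-$(\leq s)$ perturbation distance is (at worst) doubled under taking minors on a common ground set---via Lemma \ref{pert&dist} and the fact that elementary lifts and projections commute with deletion and contraction---the $M(K_{g(3, 4t) + 1})$-minor guaranteed by Lemma \ref{lem:dyadic} together with Theorem \ref{thomassen} is a rank-$(\leq 4t)$ perturbation of a dual of a represented frame matroid, contradicting Notation \ref{function}.

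The main obstacle is ruling out that $M$ is a rank-$(\leq t)$ perturbation of the dual of a frame matroid, or equivalently (by Lemma \ref{dualperturbations}) that $Or(G, V(G))$ is a rank-$(\leq 2t)$ perturbation of a frame matroid $N$. Crude density bounds via Lemma \ref{sizedifference} are insufficient here: both $\varepsilon(Or(G, V(G)))$ and $r(Or(G, V(G)))$ grow linearly in $|V(G)|$, and a frame matroid over $\mathrm{GF}(3)$ can have $\varepsilon$ quadratic in its rank, so the $\varepsilon$-to-rank ratio does not yield a contradiction. The plan is instead to identify a structural invariant of $Or(G, V(G))$ whose value grows with $|R| = |V(G)|$, because each of the $|R|$ gadgets $F_i$ contributes independently (the gadgets being attached at disjoint vertices of $G$ with only three boundary elements each), but which is bounded by a function of $r(N)$ and $t$ in any rank-$(\leq 2t)$ perturbation of a frame matroid. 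Candidate invariants include counts of disjoint minors of a specific non-graph-like type witnessed by each gadget, or incidence patterns among rank-$1$ and rank-$2$ flats that cannot be produced in abundance inside a frame representation. Isolating such an invariant and proving both its linear growth in $|R|$ and its boundedness in frame-matroid perturbations constitutes the main technical hurdle.
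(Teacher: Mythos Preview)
Your proposal correctly identifies the overall architecture and dispatches the easier direction (ruling out a perturbation of a frame matroid via the large clique minor and Lemma~\ref{graphicvsframedual}). However, there are two genuine gaps.

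\textbf{First, the choice $R = V(G)$ undermines the connectivity step.} The paper does \emph{not} take $R = V(G)$; it takes $R$ to be a set of $c$ vertices that are pairwise at distance at least $d$ in $G$ (Claim~\ref{cla:R}). This spacing is essential to the cyclic connectivity proof (Claim~\ref{cla:connected}): the argument shows that any cyclic separation forces one side to contain a path in $G$ between two vertices of $R$, hence of length at least $d$, and then derives a contradiction from a bound on the size of a cubic tree with few leaves. With $R = V(G)$ every edge of $G$ is already such a path, so that argument collapses. Your sketch (``a small cyclic separation must be contained in a single gadget or reduce to a small cyclic separation of $G$'') is plausible but is not a proof; separations can cut across many gadgets simultaneously, and the interaction between adjacent gadgets when every vertex carries one is exactly what the paper's spacing hypothesis is designed to avoid.

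\textbf{Second, and more seriously, you do not prove the hard direction.} You explicitly leave open the step showing that $Or(G,R)$ is not a rank-$(\le 2t)$ perturbation of a frame matroid, proposing only unspecified ``candidate invariants.'' The paper's argument here is quite concrete and not obviously replaceable by a soft invariant: it passes to a minor $N$ consisting of $K_{m-c}$ with $c$ gadgets glued on (Claim~\ref{cla:N}); it uses Remark~\ref{contract} to write any rank-$(\le 2t)$ perturbation as $N$ with at most $2t$ extra rows appended (Claim~\ref{cla:Np2t}); it applies a double pigeonhole to find many gadgets whose columns receive \emph{identical} entries $U$ in those extra rows (Claim~\ref{cla:N'}); and finally it contracts down to a fixed small matroid $N''$ and verifies by exhaustive SageMath computation over all possible $U$ that $N''$ is never signed-graphic (Claim~\ref{cla:nopert}). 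The point is that the obstruction is not a growth-rate or counting invariant at all, but a finite, computer-checked certificate that survives any bounded perturbation because enough gadgets must share the same perturbation pattern. Your proposal does not reach this idea, and without it the proof is incomplete at its most important step.
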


\begin{proof}
Let $g$ and $\xi$ be the functions given in Notation \ref{function} and Theorem \ref{thomassen}, respectively. We must define several constants that will be used throughout this proof. First, let $d>3(2^{\lfloor\log_2(k)\rfloor+1}-1)$, and let $c\geq 2t+20(3^{16t})$. We define \[m=\max\{c(3^d+3)+7,g(3,2t)\}.\]
Finally, let \[h=\max\{k+1,\xi(m)\}.\]

Let $G$ be a cyclically $h$-connected cubic graph. Such a graph exists by Lemma \ref{cubic}. This implies that $G$ has girth at least $h\geq\xi(m)$. By Theorem \ref{thomassen}, $G$ has a minor $H$ isomorphic to $K_m$. Let $C$ and $D$ be the sets of edges such that $G/C\backslash D=H$. Each vertex of $H$ is obtained by contracting all edges  in a subtree of $G\backslash D$. Thus, there is a function $\phi:V(G)\rightarrow V(H)$ such that $\phi(w)=v$ if $w$ is a vertex in the subtree of $G\backslash D$ that is contracted to result in $v$.

\begin{claim}
\label{cla:R}
There is a set $R=\{v_1,v_2,\ldots,v_c\}\subseteq V(G)$ of size $c$ and $c+1$ pairwise disjoint sets $\{a_0,b_0,c_0\}$,$\{a_1,b_1,c_1\}$,$\ldots$,$\{a_c,b_c,c_c\}\subseteq V(H)$ that are also disjoint from $\phi(R)$ such that
\begin{enumerate}
 \item the members of $R$ are pairwise a distance of at least $d$ from each other in $G$, and
\item for each integer $i$ with $1\leq i\leq c$, if $T_i$ is the subtree of $G\backslash D$ that is contracted to obtain $\phi(v_i)$, then there is a set of vertices $\{a'_i,b'_i,c'_i\}\subseteq V(T_i)$ (possibly some or all of $a'_i$, $b'_i$, and $c'_i$ are equal to $v_i$) and three internally disjoint subpaths of $T_i$ from $v_i$ to $a'_i$, $b'_i$, and $c'_i$ such that $a'_i$, $b'_i$, and $c'_i$ are neighbors in $G\backslash D$ of some vertex in $\phi^{-1}(a_i)$, $\phi^{-1}(b_i)$, and $\phi^{-1}(c_i)$, respectively.
\end{enumerate}
\end{claim}

\begin{subproof}
Suppose $\{v_1, a_1, b_1, c_1\}, \ldots, \{v_{k-1}, a_{k-1}, b_{k-1}, c_{k-1}\}$ and $\{a_0,b_0,c_0\}$ were chosen to satisfy (1) and (2), with $k$ maximal. Also suppose, for a contradiction, that $k-1<c$. Since $G$ is cubic, there are at most $\sum_{i=0}^{d-1}3^i<3^d$ vertices in $G$ whose distance from some $v_i$ is less than $d$. Thus, after choosing $\{v_1,\ldots,v_{k-1}\}\subseteq V(G)$ and $\{a_0,b_0,c_0\}$,$\{a_1, b_1, c_1\}$, $\ldots$, $\{a_{k-1}, b_{k-1}, c_{k-1}\}\subseteq V(H)$, there are at least $m-(k-1)(3^d+3)-3>m-c(3^d+3)-3\geq4$ vertices $w$ in $V(H)-(\{a_0,b_0,c_0\}\cup\{\phi(v_1),a_1,b_1,c_1\}\cup\ldots\cup\{\phi(v_1),a_1,b_1,c_1\})$ such that every vertex in $\phi^{-1}(w)$ is at a distance of at least $d$ from each member of $\{v_1,\ldots,v_{k-1}\}$. (In the expression $m-(k-1)(3^d+3)-3$, the $+3$ comes from the sets $\{a_i,b_i,c_i\}$ for $i>0$, and the $-3$ comes from $\{a_0,b_0,c_0\}$.) Choose one of these vertices $w$ to be $\phi(v_k)$, and let three of the others be $\{a_k,b_k,c_k\}$.

Since each of $a_k$, $b_k$, and $c_k$ is a neighbor of $\phi(v_k)$ in $H$, there must be vertices $\{a'_k,b'_k,c'_k\}\subseteq V(T_k)$ that are neighbors in $G\backslash D$ of some vertex in $\phi^{-1}(a_k)$, $\phi^{-1}(b_k)$, and $\phi^{-1}(c_k)$, respectively. If $a'_k=b'_k=c'_k$, then let $v_k=a'_k=b'_k=c'_k$. If two of $\{a'_k,b'_k,c'_k\}$ are equal, say $a'_k=b'_k$, then let $v_k=a'_k=b'_k$. Since $T_k$ is a tree, there must be a path in $T_k$ that joins $v_k$ to $c'_k$. Now suppose $a'_k\neq b'_k\neq c'_k$. Since $T_k$ is a tree, there must be a path $P$ in $T_k$ from $a'_k$ to $b'_k$. Similarly, there must be a  path $P'$ that joins $c'_k$ to some vertex in $P$. Let $v_k$ be the vertex where these two paths meet. In each of these cases, we have three subpaths of $T_k$ that satisfy (2). Moreover, $R=\{v_1,\ldots,v_k\}$ also satisfies (1) since every vertex in $T_k$ is at a distance of at least $d$ from each member of $\{v_1,\ldots,v_{k-1}\}$. This contradicts the maximality of $k$ and proves the claim.
\end{subproof}

\begin{claim}
 \label{cla:ornamentationcircuits}
Every circuit of $\Or$ contains either the edge set of a cycle of $G$ or the edge set of a path in $G$ between two vertices in $R$.
\end{claim}

\begin{subproof}

Suppose for a contradiction that $C$ is a circuit of $\Or$ that contains neither the edge set of a cycle of $G$ nor the edge set of a path in $G$ joining vertices in $R$. Then $C\cap E(G)$ must consist of the edge sets of vertex-disjoint subtrees $S_1,S_2,\ldots, S_n$ of $G$ such that no $S_i$ contains more than one vertex in $R$. Thus, $C\subseteq (\cup_{i=1}^{c}F_i)\cup(\cup_{i=1}^{n}E(S_i))$. However, we will show by induction  on $|\cup_{i=1}^{n}E(S_i)|$, that $(\cup_{i=1}^{c}F_i)\cup(\cup_{i=1}^{n}E(S_i))$ is an independent set. Since no pair of gadgets is represented by submatrices whose sets of rows intersect, $\cup_{i=1}^{c}F_i$ is an independent set in $\Or$. Thus, the result holds when $|\cup_{i=1}^{n}E(S_i)|=0$. Now, consider $(\cup_{i=1}^{c}F_i)\cup(\cup_{i=1}^{n}E(S_i))$ where $|\cup_{i=1}^{n}E(S_i)|=k>0$ and the result holds for $|\cup_{i=1}^{n}E(S_i)|<k$. Delete a pendant edge $e$ in some $S_i$. By the induction hypothesis, $(\cup_{i=1}^{c}F_i)\cup(\cup_{i=1}^{n}E(S_i))-\{e\}$ is an independent set in $\Or$. Since $e$ is a pendant edge in some $S_i$, it must be a coloop in $(\Or)|((\cup_{i=1}^{c}F_i)\cup(\cup_{i=1}^{n}E(S_i)))$. Thus, $(\cup_{i=1}^{c}F_i)\cup(\cup_{i=1}^{n}E(S_i))$ is an independent set in $\Or$. By contradiction, this proves the claim.
\end{subproof}

Let $M$ be the dual matroid of $\Or$, and let $\lambda_M$ and $\lambda_G$ be the connectivity functions of $M$ and $M(G)$, respectively. Then, by duality, $\lambda_{\Or}=\lambda_M$.

\begin{claim}
\label{cla:connected}
 The matroid $\Or$ is cyclically $k$-connected.
\end{claim}

\begin{subproof}
   Suppose for a contradiction that $(X,Y)$ is a cyclic $k'$-separation of $\Or$, where $k'<k$. Let $A\cup B=E(G)$, with $A\subseteq X$ and $B\subseteq Y$. Since $M(G)$ has cyclic connectivity $k>k'$, it has no cyclic $k'$-separation. Therefore, one of $A$ or $B$, say $A$, has no cycles. However, since $(X,Y)$ is a cyclic $k'$-separation, $X$ and $Y$ each contain a circuit of $\Or$. Since $A$, and therefore $X$, contain no edge set of a cycle of $G$, we see from Claim \ref{cla:ornamentationcircuits} that $X$, and therefore $A$, contain the edge set of a path in $G$ joining vertices in $R$. By Claim \ref{cla:R}, this path has length at least $d$. This path must be contained in some component of $G[A]$ with edge set $A_1$. If a cubic graph either is disconnected or has a cut vertex, then both sides of the separation must contain cycles. Therefore, $G$ is a connected graph with no cut vertices. Let $B_1=E(G)-A_1$, and let $A_2=A-A_1$. Suppose $G[B_1]$ is not connected. Then, since $G[A_1]$ is a tree, there is a unique path in $G[A_1]$ from one component of $G[B_1]$ to another. This implies that $G$ has a cut vertex. Thus, we deduce that $G[B_1]$ is connected.

Let $r_G$ be the rank function of $M(G)$. Since $B_1$ is the disjoint union of $B$ and $A_2$, we have $r_G(B_1)\leq r_G(B)+r_G(A_2)$. Moreover, since $G[A_1]$ is a component of $G[A]$, we have $r_G(A_1)=r_G(A)-r_G(A_2)$. Therefore, $\lambda_G(A_1)\leq r_G(A)-r_G(A_2)+r_G(B)+r_G(A_2)-r_G(E(G))=\lambda_G(A)$. Let $W$ be the set of vertices of the vertex boundary between $A_1$ and $B_1$.  We have $\lambda_G(A_1)=r_G(A_1)+r_G(B_1)-r_G(E(G))=|V(G[A_1])|-1+|V(G[B_1])|-1-(|V(G)|-1)=|W|-1$. Thus, we have $|W|-1=\lambda_G(A_1)\leq\lambda_G(A)\leq\lambda_M(X)<k'$. Therefore, $|W|<k'+1$.

Note that, since $G$ is cubic and $G[A_1]$ contains no cycle, $G[A_1]$ is a cubic tree whose set of leaves is $W$. We now claim that no vertex of $G[A_1]$ is at a distance greater than $\lfloor\log_2(k')\rfloor +1$ from $W$. Suppose for a contradiction that $v$ is such a vertex. Therefore, there are $3(2^{\lfloor\log_2(k')\rfloor+1})>3(2^{\log_2(k')})=3h$ vertices at distance $\lfloor\log_2(k')\rfloor+2$ from $v$ in $G[A_1]$. This implies that $|W|>3k'$, contradicting the facts that $|W|<k'+1$ and $k'$ is a positive integer.

Therefore, each vertex of $G[A_1]$ is a distance of at most $\lfloor\log_2(k')\rfloor+1$ from $W$. Thus, since $G$ is cubic, an upper bound for $|A_1|$ is \[3(\sum_{i=0}^{\lfloor\log_2(k')\rfloor}2^i)=3(2^{\lfloor\log_2(k')\rfloor+1}-1)\leq3(2^{\lfloor\log_2(k)\rfloor+1}-1)<d,\] contradicting the fact that $G[A_1]$ must have a path of length at least $d$. Thus, $\Or$ has no cyclic $k'$-separation for $k'<k$ and is therefore cyclically $k$-connected.
\end{subproof}

For each $v_i\in R$, let $L_i$ consist of the three edges in $H$ that join $\phi(v_i)$ to the vertices in $\{a_i,b_i,c_i\}$. Let $D'$ consist of all the edges in $H$ incident with a vertex in $\phi(R)$ other than the edges in some $L_i$. 

\begin{claim}
\label{cla:N}
Consider $c$ copies of the submatrix $\begin{bmatrix}
 1&0&1\\
-1&1&0\\
0&-1&-1\\
\end{bmatrix}$ of the signed incidence matrix of $K_{m-c}$, where each of these submatrices has rows indexed by some $\{a_i,b_i,c_i\}$. Then $\Or$ has a minor $N$ that is the vector matroid of the matrix obtained from the signed incidence matrix of $K_{m-c}$ by altering each of these submatrices to become the following matrix, where the bottom row is a new row added to the original matrix. Here, $F_i=\{d_i,e_i,f_i,g_i\}$ is a gadget.
\[\begin{blockarray}{cccccccc}
&&&& d_i & e_i & f_i & g_i\\
\begin{block}{c[ccccccc]}
a_i&1&0&1&0&1&0&1\\
b_i&-1&1&0&0&1&1&0\\
c_i&0&-1&-1&0&0&1&1\\
&0&0&0&1&1&1&1\\
\end{block}
\end{blockarray}\] 
\end{claim}

\begin{subproof}
Recall that $C$ and $D$ are the sets of edges such that $G/C\backslash D=H\cong K_m$. Then $N=(\Or)/C\backslash(D\cup D')/\cup_{i=1}^{c} L_i$. Informally, $N$ is the result of ``gluing'' each $F_i$ onto the set $\{a_i,b_i,c_i\}$ of vertices in $K_{m-c}$.
\end{subproof}

Call the resulting matrix $J$ so that $N=M(J)$. Note that $J$ has $m$ rows and $r(N)=m-1$. Let $\Np$ be a rank-$(\leq 2t)$ perturbation of $N$.

\begin{claim}
\label{cla:Np2t}
 For some $s\leq 2t$, there are vectors $w''_1,\ldots, w''_s$ and a submatrix $J'$ of $J$ such that $\Np$ has a minor isomorphic to the vector matroid of \begin{center}
\begin{tabular}{|c|}
\hline
\phantom{XXXXXXXXX}$w''_1$\phantom{XXXXXXXXX}\\
\phantom{XXXXXXXXX}$\vdots$\phantom{XXXXXXXXX}\\
\phantom{XXXXXXXXX}$w''_s$\phantom{XXXXXXXXX}\\
\hline
\multirow{2}{*}{\phantom{XXXXXXXXX}$J'$\phantom{XXXXXXXXX}}\\
\\
\hline
\end{tabular}
\end{center} and such that $J'$ contains at least $20(3^{16t})$ copies of the submatrix \[\begin{blockarray}{ccccc}
& d_i & e_i & f_i & g_i\\
\begin{block}{c[cccc]}
a_i&0&1&0&1\\
b_i&0&1&1&0\\
c_i&0&0&1&1\\
&1&1&1&1\\
\end{block}
\end{blockarray}\] that represents a gadget.
\end{claim}

\begin{subproof}
By Remark \ref{contract}, $\Np$ is the result of contracting $C'$ from the vector matroid of the matrix below, where $\Delta$ is some arbitrary ternary matrix.
\begin{center}
\begin{tabular}{|c|c|}
\multicolumn{1}{c}{}&\multicolumn{1}{c}{$C'$}\\
\hline
\phantom{XXXXXXXXX}$w_1$\phantom{XXXXXXXXX}&\\
\phantom{XXXXXXXXX}$\vdots$\phantom{XXXXXXXXX}&$-I$\\
\phantom{XXXXXXXXX}$w_{2t}$\phantom{XXXXXXXXX}&\\
\hline
\multirow{2}{*}{\phantom{XXXXXXXXX}$J$\phantom{XXXXXXXXX}}&\multirow{4}{*}{$\Delta$}\\
&\\
\cline{1-1}
\multirow{2}{*}{\phantom{XXXXXXXXX}$0$\phantom{XXXXXXXXX}}&\\
&\\
\hline
\end{tabular}
\end{center} 
Let $V_{\Delta}$ be the set of row indices of a basis for the rowspace of $\Delta$. Delete from $N$ all elements represented by columns with nonzero entries in $V_{\Delta}$, along with all gadgets containing such an element. This is equivalent to deleting vertices from the complete graph $K_{m-c}$ that was used to construct $N$, as well as any gadgets glued onto these vertices. Thus, we are still left with a complete graph with gadgets glued onto it. Moreover, since $|V_{\Delta}|\leq 2t$, we have at least $c-2t\geq20(3^{16t})$ gadgets remaining. Since $V_{\Delta}$ is a basis for the rowspace of $\Delta$, we may perform row operations to obtain the following matrix, where $J'$ is a submatrix of $J$, where each $w'_i$ is a coordinate projection of $w_i$, and where $\Delta'=\Delta[V_{\Delta},C']$.

\begin{center}
\begin{tabular}{|c|c|}
\multicolumn{1}{c}{}&\multicolumn{1}{c}{$C'$}\\
\hline
\phantom{XXXXXXXXX}$w'_1$\phantom{XXXXXXXXX}&\\
\phantom{XXXXXXXXX}$\vdots$\phantom{XXXXXXXXX}&$-I$\\
\phantom{XXXXXXXXX}$w'_{2t}$\phantom{XXXXXXXXX}&\\
\hline
\multirow{2}{*}{\phantom{XXXXXXXXX}$J'$\phantom{XXXXXXXXX}}&\multirow{2}{*}{$0$}\\
&\\
\hline
\multirow{2}{*}{\phantom{XXXXXXXXX}$0$\phantom{XXXXXXXXX}}&\multirow{2}{*}{$\Delta'$}\\
&\\
\hline
\end{tabular}
\end{center}

For each element of $V_{\Delta}$, we contract one element of $C'$, pivoting on an entry in the row of $\Delta'$. We obtain the following matrix.
\begin{center}
\begin{tabular}{|c|c|}
\multicolumn{1}{c}{}&\multicolumn{1}{c}{$C''$}\\
\hline
\phantom{XXXXXXXXX}$w'_1$\phantom{XXXXXXXXX}&\\
\phantom{XXXXXXXXX}$\vdots$\phantom{XXXXXXXXX}&$Q$\\
\phantom{XXXXXXXXX}$w'_{2t}$\phantom{XXXXXXXXX}&\\
\hline
\multirow{2}{*}{\phantom{XXXXXXXXX}$J'$\phantom{XXXXXXXXX}}&\multirow{2}{*}{$0$}\\
&\\
\hline
\end{tabular}
\end{center}
By contracting $C''$, we obtain the desired matrix, with $s=2t-|C''|$.
\end{subproof}

Recall that $\{a_0,b_0,c_0\}$, $\{a_1,b_1,c_1\}$, $\ldots$, $\{a_c,b_c,c_c\}$ are the sets of vertices of $H\cong K_m$ onto which the gadgets are ``glued''. Figure \ref{fig:M(J'')}, \begin{figure}
\[\begin{tikzpicture}[x=.5cm, y=.5cm]
	\coordinate (c1j) at (1,9);
	\coordinate (b1j) at (2,10.5);
	\coordinate (d1j) at (3,7.8);
	\coordinate (c11) at (5,9);
	\coordinate (b11) at (6,10.5);
	\coordinate (d11) at (7,7.8);
	\coordinate (c0) at (8.5,9);
	\coordinate (b0) at (10,10.5);
	\coordinate (d0) at (11,7.8);
	\coordinate (c21) at (8.5,6);
	\coordinate (b21) at (10,7);
	\coordinate (d21) at (11,5);
	\coordinate (c2j) at (8.5,2);
	\coordinate (b2j) at (10,3);
	\coordinate (d2j) at (11,1);
	\coordinate (ci1) at (13,9);
	\coordinate (bi1) at (14,10.5);
	\coordinate (di1) at (15,7.8);
	\coordinate (cij) at (17,9);
	\coordinate (bij) at (18,10.5);
	\coordinate (dij) at (19,7.8);

\shade [left color=black, right color=white] (b1j) -- (c1j) -- (d1j) -- cycle;
\shade [left color=black, right color=white] (b11) -- (c11) -- (d11) -- cycle;
\shade [left color=black, right color=white] (bi1) -- (ci1) -- (di1) -- cycle;
\shade [left color=black, right color=white] (bij) -- (cij) -- (dij) -- cycle;
\shade [top color=black, bottom color=white] (b21) -- (c21) -- (d21) -- cycle;
\shade [top color=black, bottom color=white] (b2j) -- (c2j) -- (d2j) -- cycle;
	\vertex[fill] (c1j) at (1,9){};
	\vertex[fill] (b1j) at (2,10.5) [label=above:$F_{1,5(3^{8t})}$] {};
	\vertex[fill] (d1j) at (3,7.8){};
	\vertex[fill] (c11) at (5,9){};
	\vertex[fill] (b11) at (6,10.5) [label=above:$F_{1,1}$] {};
	\vertex[fill] (d11) at (7,7.8){};
	\vertex[fill] (c0) at (8.5,9) [label=above:$b_0$] {};
	\vertex[fill] (b0) at (10,10.5) [label=above:$a_0$] {};
	\vertex[fill] (d0) at (11,7.8) [label=above:$c_0$] {};
	\vertex[fill] (c21) at (8.5,6) [label=left:$F_{2,1}$] {};
	\vertex[fill] (b21) at (10,7){};
	\vertex[fill] (d21) at (11,5){};
	\vertex[fill] (c2j) at (8.5,2) [label=left:$F_{2,5(3^{8t})}$] {};
	\vertex[fill] (b2j) at (10,3){};
	\vertex[fill] (d2j) at (11,1){};
	\vertex[fill] (ci1) at (13,9){};
	\vertex[fill] (bi1) at (14,10.5) [label=above:$F_{4(3^{8t}),1}$] {};
	\vertex[fill] (di1) at (15,7.8){};
	\vertex[fill] (cij) at (17,9){};
	\vertex[fill] (bij) at (18,10.5) [label=above:$F_{4(3^{8t}),5(3^{8t})}$] {};
	\vertex[fill] (dij) at (19,7.8){};

	\node[align=left] at (12,5) {$\iddots$};
	\node[align=left] at (13,5.8) {$\iddots$};
	\node[align=left] at (14,6.6) {$\iddots$};
	\node[align=left] at (15,7.4) {$\iddots$};
	\node[align=left] at (4,8.5) {$\dots$};
	\node[align=left] at (4,9.5) {$\dots$};
	\node[align=left] at (16,8.5) {$\dots$};
	\node[align=left] at (16,9.5) {$\dots$};
	\node[align=left] at (9.3,4) {$\vdots$};
	\node[align=left] at (10.5,4) {$\vdots$};
	\path
		(b1j) edge (b11)
		(b11) edge (b0)
		(c1j) edge (c11)
		(c11) edge (c0)
		(d1j) edge (d11)
		(d11) edge (d0)
		(b2j) edge (b21)
		(b21) edge (b0)
		(c2j) edge (c21)
		(c21) edge (c0)
		(d2j) edge (d21)
		(d21) edge (d0)
		(bij) edge (bi1)
		(bi1) edge (b0)
		(cij) edge (ci1)
		(ci1) edge (c0)
		(dij) edge (di1)
		(di1) edge (d0)
	;
\end{tikzpicture}\]
\caption{A representation of the matroid $M(J'')$ in Claim \ref{cla:J''}}
  \label{fig:M(J'')}
\end{figure}
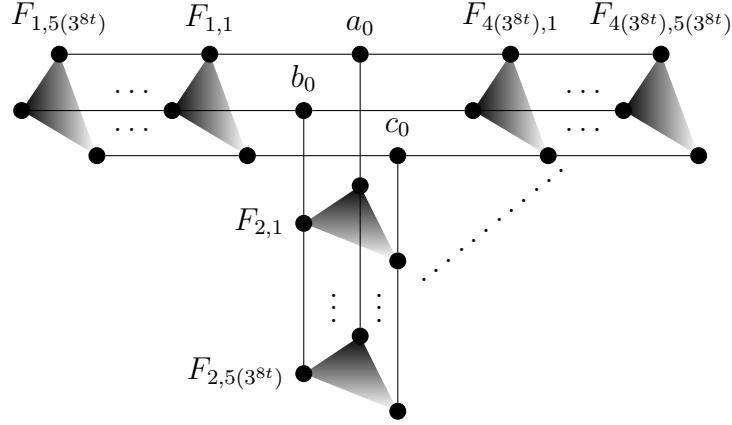 shows a representation of a restriction of $M(J')$ where each shaded triangle represents a gadget $F_i$ with vertices $a_i$, $b_i$, and $c_i$ positioned at the top, left, and bottom respectively. Consider the subtree of $K_{m-c}$ obtained by deleting from the matroid represented in Figure \ref{fig:M(J'')} all of the gadgets as well as all vertices $b_i$ and $c_i$. Call this tree $T_a$. Figure \ref{fig:T_a} 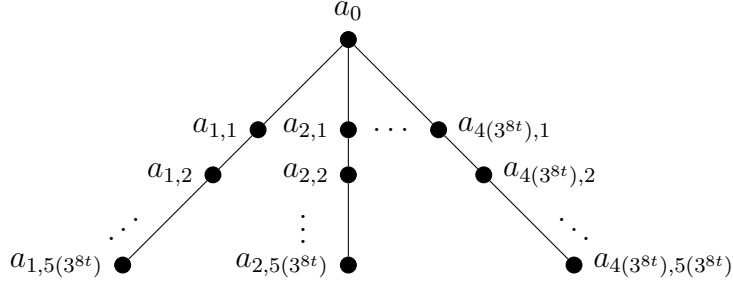
\begin{figure}
\[\begin{tikzpicture}[x=.6cm, y=.6cm]
	\vertex[fill] (a0) at (6,6) [label=above:$a_0$] {};
	\vertex[fill] (a11) at (4,4) [label=left:$a_{1,1}$] {};
	\vertex[fill] (a21) at (6,4) [label=left:$a_{2,1}$] {};
	\vertex[fill] (ai1) at (8,4) [label=right:$a_{4(3^{8t}),1}$] {};
	\vertex[fill] (a12) at (3,3) [label=left:$a_{1,2}$] {};
	\vertex[fill] (a22) at (6,3) [label=left:$a_{2,2}$] {};
	\vertex[fill] (ai2) at (9,3) [label=right:$a_{4(3^{8t}),2}$] {};
	\vertex[fill] (a1j) at (1,1) [label=left:$a_{1,5(3^{8t})}$] {};
	\vertex[fill] (a2j) at (6,1) [label=left:$a_{2,5(3^{8t})}$] {};
	\vertex[fill] (aij) at (11,1) [label=right:$a_{4(3^{8t}),5(3^{8t})}$] {};

	\node[align=left] at (1,2) {$\iddots$};
	\node[align=left] at (5,2) {$\vdots$};
	\node[align=left] at (7,4) {$\dots$};
	\node[align=left] at (11,2) {$\ddots$};
	\path
		(a0) edge (a11)
		(a0) edge (a21)
		(a0) edge (ai1)
		(a12) edge (a11)
		(a22) edge (a21)
		(ai2) edge (ai1)
		(a12) edge (a1j)
		(a22) edge (a2j)
		(ai2) edge (aij)
	;
\end{tikzpicture}\]
\caption{The tree $T_a$ with its vertex labels, used in Claim \ref{cla:J''}}
  \label{fig:T_a}
\end{figure} shows $T_a$; the trees $T_b$ and $T_c$ are defined similarly.

\begin{claim}
\label{cla:J''}
The matrix $J'$ has a submatrix $J''$, with the same number of rows as $J'$, such that $M(J'')$ is represented by Figure \ref{fig:M(J'')}.
\end{claim}

\begin{subproof}
Partition the set of gadgets into $4(3^{8t})$ subsets $\mathcal{F}_1,\ldots,\mathcal{F}_{4(3^{8t})}$, each of size at least $5(3^{8t})$. This is possible since $20(3^{16t})=(5)(3^{8t})(4)(3^{8t})$. Let $F_{i,j}=\{d_{i,j},e_{i,j},f_{i,j},g_{i,j}\}$ be the $j$-th gadget in $\mathcal{F}_i$, and let it be glued onto the vertices $\{a_{i,j},b_{i,j},c_{i,j}\}$.

Consider the submatrix $J''$ of $J'$ consisting of the columns indexed by the union of $E(T_a)$, $E(T_b)$, and $E(T_c)$ with the union $F$ of all of the gadgets. One can see that $M(J'')$ can be represented by Figure \ref{fig:M(J'')}.
\end{subproof}

\begin{claim}
\label{cla:N'}
There are ternary matrices $U$ and $J'''$ such that $M(J''')$ can be represented by Figure \ref{fig:M(J''')} 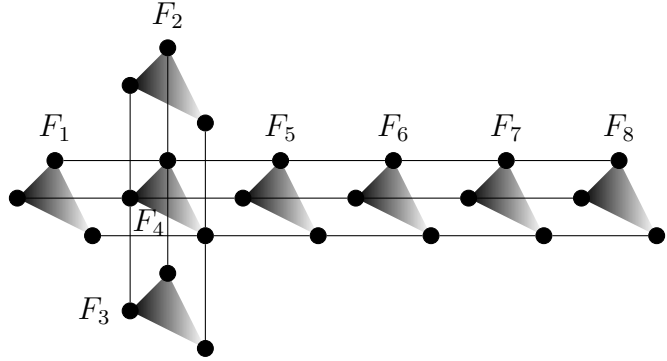
\begin{figure}
\[\begin{tikzpicture}[x=.5cm, y=.5cm]
	\coordinate (b1) at (2,6);
	\coordinate (c1) at (1,5);
	\coordinate (d1) at (3,4);
	\coordinate (b2) at (5,9);
	\coordinate (c2) at (4,8);
	\coordinate (d2) at (6,7);
	\coordinate (b3) at (5,3);
	\coordinate (c3) at (4,2);
	\coordinate (d3) at (6,1);
	\coordinate (b4) at (5,6);
	\coordinate (c4) at (4,5);
	\coordinate (d4) at (6,4);
	\coordinate (b5) at (8,6);
	\coordinate (c5) at (7,5);
	\coordinate (d5) at (9,4);
	\coordinate (b6) at (11,6);
	\coordinate (c6) at (10,5);
	\coordinate (d6) at (12,4);
	\coordinate (b7) at (14,6);
	\coordinate (c7) at (13,5);
	\coordinate (d7) at (15,4);
	\coordinate (b8) at (17,6);
	\coordinate (c8) at (16,5);
	\coordinate (d8) at (18,4);

	\shade [left color=black, right color=white] (b1) -- (c1) -- (d1) -- cycle;
	\shade [left color=black, right color=white] (b2) -- (c2) -- (d2) -- cycle;
	\shade [left color=black, right color=white] (b3) -- (c3) -- (d3) -- cycle;
	\shade [left color=black, right color=white] (b4) -- (c4) -- (d4) -- cycle;
	\shade [left color=black, right color=white] (b5) -- (c5) -- (d5) -- cycle;
	\shade [left color=black, right color=white] (b6) -- (c6) -- (d6) -- cycle;
	\shade [left color=black, right color=white] (b7) -- (c7) -- (d7) -- cycle;
	\shade [left color=black, right color=white] (b8) -- (c8) -- (d8) -- cycle;

	\vertex[fill] (b1) at (2,6)[label=above:$F_1$] {};
	\vertex[fill] (c1) at (1,5){};
	\vertex[fill] (d1) at (3,4){};
	\vertex[fill] (b2) at (5,9)[label=above:$F_2$] {};
	\vertex[fill] (c2) at (4,8){};
	\vertex[fill] (d2) at (6,7){};
	\vertex[fill] (b3) at (5,3){};
	\vertex[fill] (c3) at (4,2)[label=left:$F_3$] {};
	\vertex[fill] (d3) at (6,1){};
	\vertex[fill] (b4) at (5,6){};
	\vertex[fill] (c4) at (4,5){};
	\vertex[fill] (d4) at (6,4){};
	\vertex[fill] (b5) at (8,6)[label=above:$F_5$] {};
	\vertex[fill] (c5) at (7,5){};
	\vertex[fill] (d5) at (9,4){};
	\vertex[fill] (b6) at (11,6)[label=above:$F_6$] {};
	\vertex[fill] (c6) at (10,5){};
	\vertex[fill] (d6) at (12,4){};
	\vertex[fill] (b7) at (14,6)[label=above:$F_7$] {};
	\vertex[fill] (c7) at (13,5){};
	\vertex[fill] (d7) at (15,4){};
	\vertex[fill] (b8) at (17,6)[label=above:$F_8$] {};
	\vertex[fill] (c8) at (16,5){};
	\vertex[fill] (d8) at (18,4){};

	\node[align=left] at (4.5,4.35) {$F_4$};

	\path
		(b1) edge (b4)
		(b2) edge (b4)
		(b3) edge (b4)
		(b4) edge (b5)
		(b5) edge (b6)
		(b6) edge (b7)
		(b7) edge (b8)
		(c1) edge (c4)
		(c2) edge (c4)
		(c3) edge (c4)
		(c4) edge (c5)
		(c5) edge (c6)
		(c6) edge (c7)
		(c7) edge (c8)
		(d1) edge (d4)
		(d2) edge (d4)
		(d3) edge (d4)
		(d4) edge (d5)
		(d5) edge (d6)
		(d6) edge (d7)
		(d7) edge (d8)
;
\end{tikzpicture}\]
\caption{A representation of the matroid $M(J''')$ in Claim \ref{cla:N'}}
  \label{fig:M(J''')}
\end{figure} and such that $\Np$ has a minor $N'$ represented by the following matrix. \begin{center}
\begin{tabular}{|c|c|c|c|c|}
\multicolumn{1}{c}{}&\multicolumn{1}{c}{$F_1$}&\multicolumn{1}{c}{$F_2$}&\multicolumn{1}{c}{$\cdots$}&\multicolumn{1}{c}{$F_8$}\\
\hline
$0$&$U$&$U$&$\cdots$&$U$\\
\hline
\multicolumn{5}{|c|}{$J'''$}\\
\hline
\end{tabular}
\end{center}
\end{claim}

\begin{subproof}
Let $W=\begin{bmatrix}
\phantom{XX}w''_1\phantom{XX}\\
\vdots\\
w''_s\\
\end{bmatrix}$. Since $E(T_a)\cup E(T_b)\cup E(T_c)$ is an independent set in $M(J'')$, we may perform row operations so that the portion of $W$ with columns indexed by $E(T_a)\cup E(T_b)\cup E(T_c)$ becomes the zero matrix. Thus, we have the following matrix.
\begin{center}
\begin{tabular}{|ccc|c|}
\multicolumn{1}{c}{$E(T_a)$}&\multicolumn{1}{c}{$E(T_b)$}&\multicolumn{1}{c}{$E(T_c)$}&\multicolumn{1}{c}{$F$}\\
\hline
\multicolumn{3}{|c|}{$0$}&\phantom{XXXXXXX}$W'$\phantom{XXXXXXX}\\
\hline
\multicolumn{4}{|c|}{$J''$}\\
\hline
\end{tabular}
\end{center}

The portion of $W'$ whose columns are indexed by the elements of a gadget is an $s\times4$ ternary matrix; therefore, there are $3^{4s}\leq3^{8t}$ possible such matrices. Since each $\mathcal{F}_i$ contains at least $5(3^{8t})$ gadgets, the pigeonhole principle implies that each $\mathcal{F}_i$ contains five gadgets whose corresponding submatrices of $W'$ are equal. Again by the pigeonhole principle, since there are $4(3^{8t})$ sets $\mathcal{F}_i$, there is a set of four $\mathcal{F}_i$ such that each contains at least five gadgets such that all 20 of the gadgets correspond to equal submatrices of $W'$.

Delete all of the other $c-2t-20$ gadgets. All of the remaining gadgets come from four $\mathcal{F}_i$ which we can relabel as $\mathcal{F}_1$, $\mathcal{F}_2$, $\mathcal{F}_3$, and $\mathcal{F}_4$. In addition, delete all but one gadget from each of $\mathcal{F}_2$, $\mathcal{F}_3$, and $\mathcal{F}_4$, cosimplify the resulting matroid, and contract the remaining edges incident with either $a_0$, $b_0$, or $c_0$. In the resulting matroid, there are eight remaining gadgets which we relabel as $F_1,F_2,\ldots, F_8$. The elements of each $F_i$ we relabel as $\{d_i,e_i,f_i,g_i\}$, and we relabel the vertices onto which $F_i$ is glued as $a_i$, $b_i$, and $c_i$. This matroid is the desired matroid $N'$. In Figure \ref{fig:M(J''')}, again each shaded triangle represents a gadget $F_i$ with vertices $a_i$, $b_i$, and $c_i$ positioned at the top, left, and bottom respectively.
\end{subproof}

\begin{claim}
\label{cla:nopert}
Regardless of $U$, the matroid $N'$ is not a frame matroid.
\end{claim}

\begin{subproof}
Let $P$ consist of all edges joining a gadget $F_i$ to a gadget $F_{i+1}$, for $4\leq i\leq 7$,  and for $1\leq i\leq 3$, let $\alpha_i$, $\beta_i$, and $\gamma_i$ be the edges that join $a_i$, $b_i$, and $c_i$ to $a_4$, $b_4$, and $c_4$, respectively. Now let $N''$ be the simplification of $N'/P/\{\alpha_1,\beta_2,\gamma_3\}/(\cup_{i=1}^{i=3}\{e_i,f_i,g_i\})/\{d_5,e_6,f_7,g_8\}$. In the case where $U$ is the zero matrix, $N''$ is the generalized parallel connection of $M(K_5)$ with the ternary Dowling geometry of rank $3$; that is, $N''$ is the vector matroid of the following matrix, where the last six columns come from the gadgets $F_5,\ldots,F_8$.
\[\begin{blockarray}{ccccccccccccc}
d_1 & d_2 & d_3 & d_4 & e_4 & f_4 & g_4&&&&&&\\
\begin{block}{[ccccccccccccc]}
1&0&0&0&1&0&1&1&1&0&0&1&1\\
0&1&0&0&1&1&0&-1&1&1&1&0&0\\
0&0&1&0&0&1&1&0&0&-1&1&-1&1\\
0&0&0&1&1&1&1&0&0&0&0&0&0\\
\end{block}
\end{blockarray}\]

In the Appendix, we show how the mathematics software system SageMath was used to show that, regardless of $U$, the matroid $N''$, and therefore $N'$, are not signed-graphic matroids. The computations were carried out in Version 8.0 of SageMath \cite{sage}, in particular making use of the \emph{matroids} component \cite{sage-matroid}. We used the CoCalc (formerly SageMathCloud) online interface.

Indeed, since $U$ has four columns, its rank is at most 4. Therefore, we may assume that $U$ has at most four rows. There are 16 possible bases for $M(U)$ -- one of size 0, four of size 1, six of size 2, four of size 3, and one of size 4. For each of these bases, we checked all possible matrices $U$ where the basis indexed an identity matrix, unless the resulting matroid $M(U)$ contained a basis that was already checked. In each case, $N''$ was found not to be signed-graphic. A ternary matroid is a frame matroid if and only if it is a signed-graphic matroid. Therefore, $N'$ is not a frame matroid.
\end{subproof}

Recall that $M^*=\Or$.

\begin{claim}
\label{cla:Mnotdualpert}
The matroid $M$ is not a rank-$(\leq t)$ perturbation of the dual of a frame matroid.
\end{claim}

\begin{subproof}
Suppose otherwise. Then by Lemma \ref{dualperturbations}, $\Or$ is a rank-$(\leq 2t)$ perturbation of a frame matroid. The class of matroids that are rank-$(\leq 2t)$ perturbations of a frame matroid is minor-closed. Therefore, by Claim \ref{cla:N}, $N$ is a rank-$(\leq 2t)$ perturbation of a frame matroid. However, by Claims \ref{cla:N'} and \ref{cla:nopert}, this is impossible.
\end{subproof}

\begin{claim}
\label{cla:Mnotpert}
The matroid $M$ is not a rank-$(\leq t)$ perturbation of a frame matroid.
\end{claim}

\begin{subproof}
Suppose otherwise. Then by Lemma \ref{dualperturbations}, $\Or$ is a rank-$(\leq 2t)$ perturbation of the dual of a frame matroid. Recall that, by Theorem \ref{thomassen}, $G$ contains a minor isomorphic to $K_m$. Therefore, by Lemma \ref{lem:dyadic}, $\Or$ has a minor isomorphic to $M(K_m)$. But, since $m\geq g(3,2t)$, Lemma \ref{graphicvsframedual} implies that $M(K_m)$, and therefore $\Or$, are not rank-$(\leq 2t)$ perturbations of the dual of a frame matroid.
\end{subproof}

By Lemma \ref{lem:dyadic}, $\Or$, is dyadic. Since the class of dyadic matroids is closed under duality, $M$ is dyadic also. By Claim \ref{cla:connected} and duality, $M$ is vertically $k$-connected. Claims \ref{cla:Mnotdualpert} and \ref{cla:Mnotpert} show that $M$ is not a rank-$(\leq t)$ perturbation of either a frame matroid or the dual of a frame matroid. This completes the proof of the theorem.
\end{proof}

\begin{corollary}
 \label{cor:counterexample}
The family of matroids given in Theorem \ref{thm:construction} is a counterexample to Conjecture \ref{con:perturb}.
\end{corollary}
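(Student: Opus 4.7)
The plan is to apply Conjecture \ref{con:perturb} to the specific pair $\mathbb{F} = \mathrm{GF}(3)$ and $\mathcal{M}$ taken to be the class of $\mathrm{GF}(3)$-represented dyadic matroids, and then to use the family constructed in Theorem \ref{thm:construction} as the witness that defeats it. First I would verify that $\mathcal{M}$ satisfies the hypothesis of the conjecture: it is minor-closed because, by Theorem \ref{thm:dyadic}, dyadicity is the conjunction of $\mathrm{GF}(3)$- and $\mathrm{GF}(5)$-representability, each of which is minor-closed; and it is a \emph{proper} subclass of the $\mathrm{GF}(3)$-represented matroids, since for instance $\mathrm{AG}(2,3)$ is ternary but not $\mathrm{GF}(5)$-representable, hence not dyadic. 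The identification of dyadic matroids with $\mathrm{GF}(3)$-represented matroids is unambiguous because ternary matroids are uniquely $\mathrm{GF}(3)$-representable, as already noted in the preliminaries.

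Next I would dispose of outcome (iii) of the conjecture for this choice of ground field. Since $\mathrm{GF}(3)$ is a prime field, it has no proper subfield, so no represented matroid over $\mathrm{GF}(3)$ can be confined to one. Consequently, if Conjecture \ref{con:perturb} were to hold for $\mathcal{M}$, there would exist $k, t \in \mathbb{Z}_+$ such that every vertically $k$-connected dyadic matroid is a rank-$(\leq t)$ perturbation of either a represented frame matroid over $\mathrm{GF}(3)$ or the dual of one.

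To finish, I would apply Theorem \ref{thm:construction} with exactly these $k$ and $t$, which produces a vertically $k$-connected dyadic matroid $M$ that is not a rank-$(\leq t)$ perturbation of either a frame matroid or the dual of a frame matroid. This is a direct contradiction to the conclusion just derived from the conjecture, so $\mathcal{M}$ is a witness that Conjecture \ref{con:perturb} fails. There is no serious obstacle to overcome: the corollary is a repackaging of Theorem \ref{thm:construction}, with the only extra ingredient being the trivial observation that the prime-field case $\mathbb{F}=\mathrm{GF}(3)$ makes outcome (iii) vacuous.
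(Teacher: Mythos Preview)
Your proposal is correct and follows essentially the same approach as the paper: invoke Theorem~\ref{thm:construction} to rule out outcomes (i) and (ii), and observe that $\mathrm{GF}(3)$ has no proper subfield to rule out (iii). You add the explicit verification that the dyadic matroids form a \emph{proper} minor-closed subclass of the ternary matroids, which the paper leaves implicit, but this is a helpful elaboration rather than a different argument.
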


\begin{proof}
 By Theorem \ref{thm:construction}, for every $k,t\in\mathbb{Z}_+$, there exists a vertically $k$-connected dyadic matroid that is not a rank-$(\leq t)$ perturbation of either a frame matroid or the dual of a frame matroid. Thus, neither (i) nor (ii) of Conjecture \ref{con:perturb} is satisfied. Moreover, since the matroids given by Theorem \ref{thm:construction} are dyadic, they are representable over $\mathrm{GF}(3)$ which has no proper subfield. Therefore, if $\mathbb{F}=\mathrm{GF}(3)$ (or any prime field of odd order, for that matter), then the matroids given by Theorem \ref{thm:construction} do not satisfy (iii) of Conjecture \ref{con:perturb} either.
\end{proof}

\begin{remark}
Our construction relies heavily on a non-standard frame matroid representation of $M(K_4)$, and involves a notion of 4-sums. Each gadget is $4$-separating in our construction. The following result by Zaslavsky \cite{z90} shows that 5-sums and higher cannot be encountered in an analogous way.
\end{remark}

\begin{theorem}[{\cite[Proposition 5A]{z90}}]\label{thm:zaslavsky}
    Let $\Omega$ be a biased graph such that the frame matroid of $\Omega$ is isomorphic to $M(K_m)$ for $m \geq 5$. Then $\Omega$ is isomorphic to either $(K_m, \emptyset)$ or $\Phi_{m-1}'$, where the latter is the biased graph obtained by adding an edge $e$ in parallel with an edge of $K_m$, taking the unbalanced cycles to be the collection of cycles through $e$, and contracting $e$ in the resulting biased graph.
\end{theorem}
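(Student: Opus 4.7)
The plan is to use the rank formula for frame matroids of biased graphs and split into cases according to whether $\Omega$ is balanced. Recall that for a biased graph $\Omega$ with underlying graph $G$ on $n$ vertices, the rank of its frame matroid equals $n - b(\Omega)$, where $b(\Omega)$ is the number of balanced connected components. Since $M(\Omega)\cong M(K_m)$ is a connected matroid of rank $m-1$ with $\binom{m}{2}$ elements, the underlying graph $G$ must be connected (otherwise $M(\Omega)$ would decompose as a nontrivial direct sum of frame matroids of the components).

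First I would dispose of the balanced case. If every cycle of $\Omega$ is balanced, then $b(\Omega)=1$, which gives $|V(G)|=m$, and $M(\Omega)$ is just the cycle matroid $M(G)$ of the underlying graph. Since $M(G)\cong M(K_m)$ and $K_m$ is $3$-connected for $m\geq 4$, Whitney's $2$-isomorphism theorem forces $G\cong K_m$, and therefore $\Omega\cong(K_m,\emptyset)$.

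In the unbalanced case, $b(\Omega)=0$, so $|V(G)|=m-1$ while $|E(G)|=\binom{m}{2}$. Since $M(K_m)$ is simple, $\Omega$ has no balanced loops and no balanced digons. The strategy here is to exploit the explicit circuit structure of a frame matroid, namely balanced cycles, theta subgraphs all three of whose cycles are unbalanced, and ``handcuff'' circuits consisting either of two vertex-disjoint unbalanced cycles joined by a path or of two unbalanced cycles sharing exactly one vertex, together with the large edge count. I would show that the balanced cycles of $\Omega$ must themselves form the circuit set of a graphic matroid that, after taking into account the unique unbalanced vertex, is isomorphic to $K_m$ with one vertex split off. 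By matching this balanced skeleton against the $K_m$ structure of $M(\Omega)$ and observing that the extra edge needed to account for $\binom{m}{2}$ edges on only $m-1$ vertices can only be an unbalanced loop (equivalently a half-edge) at a single distinguished vertex, I would identify $\Omega$ precisely with the construction $\Phi_{m-1}'$ obtained by contracting the unbalanced parallel edge in the description in the statement.

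The main obstacle is controlling the unbalanced case, since a priori there is considerable flexibility in how unbalanced cycles can be distributed across $G$. Ruling out configurations with two essentially distinct unbalanced loops, or with an unbalanced cycle of length at least $3$ that is not absorbed by a single vertex, will require a delicate interplay between the simplicity of $M(K_m)$, the circuit exchange axiom applied to handcuff circuits, and the $3$-connectivity of $M(K_m)$ for $m\geq 5$ (which is exactly where the hypothesis $m\geq 5$ is used, since the smaller cases admit extra biased-graph representations that are killed by $3$-connectivity). Once the unbalanced structure is forced to be a single half-edge, the remaining balanced skeleton is pinned down by Whitney's theorem applied to an $(m-1)$-vertex subgraph, and isomorphism with $\Phi_{m-1}'$ follows by direct verification.
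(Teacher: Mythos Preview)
The paper does not prove this statement; it is quoted as Zaslavsky's result \cite[Proposition 5A]{z90} and used only as contextual motivation in a remark. So there is no proof in the paper to compare your proposal against, and for the purposes of this paper a citation is all that is required.

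That said, your sketch contains a genuine misconception about the target in the unbalanced case. You write that ``the extra edge needed to account for $\binom{m}{2}$ edges on only $m-1$ vertices can only be an unbalanced loop \ldots\ at a single distinguished vertex.'' But $\binom{m}{2}-\binom{m-1}{2}=m-1$, not $1$: the underlying graph of $\Phi_{m-1}'$ is not $K_{m-1}$ plus a single half-edge. Contracting the parallel edge $e$ of $K_m$ identifies its two endpoints into a vertex $w$; the original edge $uv$ becomes an unbalanced loop at $w$, and for every other vertex $x$ the two edges $ux,vx$ become a pair of parallel edges between $w$ and $x$. Thus $\Phi_{m-1}'$ has a distinguished vertex incident with one unbalanced loop and with a parallel class of size two toward every other vertex, with the remaining $\binom{m-2}{2}$ edges forming a simple $K_{m-2}$. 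Your plan of ``matching the balanced skeleton against $K_m$ and pinning down one extra half-edge'' therefore aims at the wrong object, and the subsequent appeal to Whitney's theorem on an $(m-1)$-vertex simple graph cannot go through as stated. The case split into balanced versus unbalanced and the treatment of the balanced case are fine, but the unbalanced case needs a correct picture of $\Phi_{m-1}'$ before the circuit-structure arguments you outline can be made to land.
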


This makes us cautiously optimistic that our construction cannot be generalized to have ``gadgets'' with arbitrary connectivity.

We believe that the subfield case, as stated by Geelen, Gerards, and Whittle \cite{ggw15}, does not need to be modified.

\section{Discussion}
\label{sec:discussion}

\subsection{Frame templates}
\label{sub:templates}
In Subsection \ref{sub:updated_conjectures}, we will offer some updated conjectures to replace Conjecture \ref{con:perturb}. Some of the hypotheses will be stated in terms of frame templates. Geelen, Gerards, and Whittle introduced the notions of \emph{subfield templates} and \emph{frame templates} in ~\cite{ggw15}. A template is a concise description of certain perturbations of represented matroids. We will recall several definitions concerning frame templates which essentially can be found in ~\cite{ggw15} as well as ~\cite{gvz17} and ~\cite{nw17}.

Let $A$ be a matrix over a field $\mathbb{F}$. Then $A$ is a \textit{frame matrix} if each column of $A$ has at most two nonzero entries. Let $\mathbb{F}^{\times}$ denote the multiplicative group of $\mathbb{F}$, and let $\Gamma$ be a subgroup of $\mathbb{F}^{\times}$. A $\Gamma$-frame matrix is a frame matrix $A$ such that:
\begin{itemize}
 \item Each column of $A$ with a nonzero entry contains a 1.
 \item If a column of $A$ has a second nonzero entry, then that entry is $-\gamma$ for some $\gamma\in\Gamma$.
\end{itemize}

A \textit{frame template} over $\mathbb{F}$ is a tuple $\Phi=(\Gamma,C,X,Y_0,Y_1,A_1,\Delta,\Lambda)$ such that the following hold\footnote{The authors of ~\cite{ggw15} divided our set $X$ into two separate sets which they called $X$ and $D$. Their set $X$ can be absorbed into $Y_0$, therefore we omit it.}:
\begin{itemize}
 \item [(i)] $\Gamma$ is a subgroup of $\mathbb{F}^{\times}$.
 \item [(ii)] $C$, $X$, $Y_0$ and $Y_1$ are disjoint finite sets.
 \item [(iii)] $A_1\in \mathbb{F}^{X\times (C\cup Y_0\cup Y_1)}$.
 \item [(iv)] $\Lambda$ is a subgroup of the additive group of $\mathbb{F}^X$ and is closed under scaling by elements of $\Gamma$.
 \item [(v)] $\Delta$ is a subgroup of the additive group of $\mathbb{F}^{C\cup Y_0 \cup Y_1}$ and is closed under scaling by elements of $\Gamma$.
\end{itemize}

Let $\Phi=(\Gamma,C,X,Y_0,Y_1,A_1,\Delta,\Lambda)$ be a frame template. Let $B$ and $E$ be finite sets, and let $A'\in\mathbb{F}^{B\times E}$. We say that $A'$ \textit{respects} $\Phi$ if the following hold:
\begin{itemize}
 \item [(i)] $X\subseteq B$ and $C, Y_0, Y_1\subseteq E$.
 \item [(ii)] $A'[X, C\cup Y_0\cup Y_1]=A_1$.
 \item [(iii)] There exists a set $Z\subseteq E-(C\cup Y_0\cup Y_1)$ such that $A'[X,Z]=0$, each column of $A'[B-X,Z]$ is a unit vector, and $A'[B-X, E-(C\cup Y_0\cup Y_1\cup Z)]$ is a $\Gamma$-frame matrix.
 \item [(iv)] Each column of $A'[X,E-(C\cup Y_0\cup Y_1\cup Z)]$ is contained in $\Lambda$.
 \item [(v)] Each row of $A'[B-X, C\cup Y_0\cup Y_1]$ is contained in $\Delta$.
\end{itemize}

The structure of $A'$ is shown below.

\begin{center}
\begin{tabular}{ r|c|c|ccc| }
\multicolumn{2}{c}{}&\multicolumn{1}{c}{$Z$}&\multicolumn{1}{c}{$Y_0$}&\multicolumn{1}{c}{$Y_1$}&\multicolumn{1}{c}{$C$}\\
\cline{2-6}
$X$&columns from $\Lambda$&$0$&&$A_1$&\\
\cline{2-6}
&$\Gamma$-frame matrix&unit columns&\multicolumn{3}{c|}{rows from  $\Delta$}\\
\cline{2-6}
\end{tabular}
\end{center}

Now, suppose that $A'$ respects $\Phi$ and that $A\in \mathbb{F}^{B\times E}$ satisfies the following conditions:
\begin{itemize}
\item [(i)] $A[B,E-Z]=A'[B,E-Z]$.
\item [(ii)] For each $i\in Z$ there exists $j\in Y_1$ such that the $i$-th column of $A$ is the sum of the $i$-th and the $j$-th columns of $A'$.
\end{itemize}
We say that such a matrix $A$ \textit{conforms} to $\Phi$.

Let $M$ be an $\mathbb{F}$-represented matroid. We say that $M$ \textit{conforms} to $\Phi$ if there is a matrix $A$ conforming to $\Phi$ such that $M$ is isomorphic to $M(A)/C\backslash Y_1$. We denote by $\mathcal{M}(\Phi)$ the set of $\mathbb{F}$-represented matroids that conform to $\Phi$.

\subsection{Updated conjectures}
\label{sub:updated_conjectures}
We offer several updated conjectures. We will state them as hypotheses for easy reference in other papers. We will give both a ``perturbation version'' and a ``template version'' of each hypothesis. The template version of Conjecture \ref{con:perturb} follows as Conjecture \ref{con:template}. Since a template gives a description of certain types of perturbations, Conjecture \ref{con:template} is false, as well as Conjecture \ref{con:perturb}. If $M$ is a represented matroid, we denote by $\widetilde M$ the matroid (in the usual sense) that arises from $M$. For a field $\mathbb{F}$ of characteristic $p\neq0$, we denote the prime subfield of $\mathbb{F}$ by $\bFp$.
\begin{conjecture}[{\cite[{Theorem 4.2}]{ggw15}}]
 \label{con:template}
Let $\mathbb F$ be a finite field, let $m$ be a positive integer, and let 
$\mathcal M$ be a minor-closed class of $\mathbb F$-represented matroids.
Then there exist $k\in\mathbb{Z}_+$ and
frame templates $\Phi_1,\ldots,\Phi_s,\Psi_1,\ldots,\Psi_t$ such that
\begin{itemize}
\item
$\mathcal{M}$ contains each of the classes
$\mathcal{M}(\Phi_1),\ldots,\mathcal{M}(\Phi_s)$,
\item
$\mathcal{M}$ contains the duals of the represented matroids in each of the classes
$\mathcal{M}(\Psi_1),\ldots,\mathcal{M}(\Psi_t)$, and
\item
if $M$ is a simple vertically $k$-connected member of $\mathcal M$
and $\widetilde M$ has no $PG(m-1,\bFp)$-minor, 
then either
$M$ is a member of at least one of the classes
$\mathcal{M}(\Phi_1),\ldots,\mathcal{M}(\Phi_s)$, or
$M^*$ is a member of at least one of the classes
$\mathcal{M}(\Psi_1),\ldots,\mathcal{M}(\Psi_t)$.
\end{itemize}
\end{conjecture}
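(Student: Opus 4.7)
The plan is to show that Conjecture \ref{con:template}, as stated, is in fact \emph{false}, by reducing it to the perturbation version, Conjecture \ref{con:perturb}, already refuted by Corollary \ref{cor:counterexample}. The central observation is that any matroid conforming to a fixed frame template $\Phi$ is a bounded-rank perturbation of a represented frame matroid, where the bound depends only on the template parameters of $\Phi$.

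First I would verify the template-to-perturbation reduction. Suppose $M = M(A)/C\ba Y_1$ for some matrix $A$ conforming to $\Phi = (\Gamma,C,X,Y_0,Y_1,A_1,\Delta,\Lambda)$. Inspecting the block decomposition in the definition of \emph{respects}, after removing the $Z$-columns of the underlying $A'$ and the $X$-rows one obtains a $\Gamma$-frame matrix. The remaining data, namely the $\Lambda$-columns in the $X$-rows, the $\Delta$-rows in the $(B-X)$-rows, and the rule that each $Z$-column of $A$ differs from the corresponding column of $A'$ by a $Y_1$-column, together contribute a perturbation whose rank is bounded in terms of $|X|+|C|+|Y_0|+|Y_1|$. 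Applying Remark \ref{contract} to absorb the $|X|$ extra rows and then contracting $C$ and deleting $Y_1$ shows that $\widetilde M$ is a rank-$(\leq t_0(\Phi))$ perturbation of a represented frame matroid. Taking the maximum over the finite lists $\Phi_1,\ldots,\Phi_s$ and $\Psi_1,\ldots,\Psi_t$ yields uniform bounds $t_0$ and $t_0'$, and Lemma \ref{dualperturbations} makes each matroid $M$ with $M^*\in\mathcal M(\Psi_j)$ a rank-$(\leq 2t_0')$ perturbation of the dual of a frame matroid.

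With that reduction in place, I would take $\mathbb F = \mathrm{GF}(3)$, $m = 3$, and $\mathcal M$ to be the (minor-closed) class of dyadic matroids: any dyadic matroid has no $PG(2,3)$-minor, since $PG(2,3)$ itself is not dyadic. Now invoke Theorem \ref{thm:construction} with $t = \max\{t_0,2t_0'\}$ and with $k$ at least as large as the value the conjecture would supply. This produces a simple, vertically $k$-connected, dyadic matroid $M$ that fails to be a rank-$(\leq t)$ perturbation of either a frame matroid or its dual. By the reduction in the previous step, $M$ therefore belongs to none of the classes $\mathcal M(\Phi_i)$, and $M^*$ to none of the $\mathcal M(\Psi_j)$, contradicting the conclusion of Conjecture \ref{con:template}.

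The main obstacle is the first step: extracting a clean, explicit bound $t_0$ from the abstract template data. The definition of \emph{conforms} has several interacting pieces (the $Z$-to-$Y_1$ summation, the $\Lambda$-columns, the $\Delta$-rows, and the contract/delete of $C\cup Y_1$), and one must verify that each contributes at most a template-controlled amount to the rank of the perturbation matrix. The path of least resistance is to view the template as an instance of the ``uncontracted'' representation of Remark \ref{contract}: the $X$-rows play the role of the basis $\{v_1,\ldots,v_t\}$ for the row space of the perturbation, while the remaining operations are contractions and deletions, neither of which can increase perturbation rank. Once that bookkeeping is in place, the disproof is essentially mechanical, and the counterexample from Theorem \ref{thm:construction} does the rest.
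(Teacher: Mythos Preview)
Your proposal is correct and follows the same route as the paper. The paper dispatches Conjecture~\ref{con:template} in a single sentence---``Since every represented matroid conforming to a particular template is a bounded-rank perturbation of a represented frame matroid, our construction disproves Conjecture~\ref{con:template} also''---whereas you spell out the template-to-perturbation bound, the choice $\mathbb{F}=\mathrm{GF}(3)$ with $\mathcal{M}$ the dyadic matroids, the use of Lemma~\ref{dualperturbations} for the $\Psi_j$ side, and the quantifier bookkeeping needed to feed $k$ and $t$ back into Theorem~\ref{thm:construction}; but the underlying argument is identical.
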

Since every represented matroid conforming to a particular template is a bounded-rank perturbation of a represented frame matroid, our construction disproves Conjecture \ref{con:template} also.

The next hypothesis replaces the requirement of vertical connectivity with the stronger requirement of Tutte connectivity.

\begin{hypothesis}\label{hyp:connected}
    Let $\mathbb{F}$ be a finite field, and let $\mathcal{M}$ be a proper minor-closed class of $\mathbb{F}$-represented matroids. There exist constants $k,t\in\mathbb{Z}_+$ such that each $k$-connected member of $\mathcal{M}$ is a rank-$(\leq t)$ perturbation of an $\mathbb{F}$-represented matroid $N$, such that either
  \begin{enumerate}
      \item $N$ is a represented frame matroid,
      \item $N^*$ is a represented frame matroid, or
      \item $N$ is confined to a proper subfield of $\mathbb{F}$.
  \end{enumerate}
\end{hypothesis}

The template version of Hypothesis \ref{hyp:connected} follows.
\begin{hypothesis}
\label{hyp:connected-template}
Let $\mathbb F$ be a finite field, let $m$ be a positive integer, and let 
$\mathcal M$ be a minor-closed class of $\mathbb F$-represented matroids.
Then there exist $k\in\mathbb{Z}_+$ and
frame templates $\Phi_1,\ldots,\Phi_s,\Psi_1,\ldots,\Psi_t$ such that
\begin{enumerate}
\item
$\mathcal{M}$ contains each of the classes
$\mathcal{M}(\Phi_1),\ldots,\mathcal{M}(\Phi_s)$,
\item
$\mathcal{M}$ contains the duals of the represented matroids in each of the classes
$\mathcal{M}(\Psi_1),\ldots,\mathcal{M}(\Psi_t)$, and
\item
if $M$ is a simple $k$-connected member of $\mathcal M$ with at least $2k$ elements
and $\widetilde M$ has no $PG(m-1,\bFp)$-minor, 
then either
$M$ is a member of at least one of the classes
$\mathcal{M}(\Phi_1),\ldots,\mathcal{M}(\Phi_s)$, or
$M^*$ is a member of at least one of the classes
$\mathcal{M}(\Psi_1),\ldots,\mathcal{M}(\Psi_t)$.
\end{enumerate}
\end{hypothesis}

Taken together, the next two hypotheses revise Conjecture \ref{con:perturb} by pairing the condition of vertical connectivity with its natural match of having a large clique minor and by pairing the dual condition of cyclic connectivity with the property of having a large coclique minor.

\begin{hypothesis}
\label{hyp:vertandclique}
   Let $\mathbb{F}$ be a finite field, and let $\mathcal{M}$ be a proper minor-closed class of $\mathbb{F}$-represented matroids. There exist constants $k,t,n\in\mathbb{Z}_+$ such that each vertically $k$-connected member of $\mathcal{M}$ containing a minor isomorphic to $M(K_n)$ is a rank-$(\leq t)$ perturbation of an $\mathbb{F}$-represented matroid $N$, such that either
  \begin{enumerate}
      \item $N$ is a represented frame matroid, or
      \item $N$ is confined to a proper subfield of $\mathbb{F}$.
  \end{enumerate}
\end{hypothesis}

\begin{hypothesis}
\label{hyp:cycandcoclique}
   Let $\mathbb{F}$ be a finite field, and let $\mathcal{M}$ be a proper minor-closed class of $\mathbb{F}$-represented matroids. There exist constants $k,t,n\in\mathbb{Z}_+$ such that each cyclically $k$-connected member of $\mathcal{M}$ containing a minor isomorphic to $M^*(K_n)$ is a rank-$(\leq t)$ perturbation of an $\mathbb{F}$-represented matroid $N$, such that either
  \begin{enumerate}
      \item $N^*$ is a represented frame matroid, or
      \item $N$ is confined to a proper subfield of $\mathbb{F}$.
  \end{enumerate}
\end{hypothesis}

Now we give the template version of Hypotheses \ref{hyp:vertandclique} and \ref{hyp:cycandcoclique}.

\begin{hypothesis}
 \label{hyp:cliquetemplate}
Let $\mathbb F$ be a finite field, let $m$ be a positive integer, and let $\mathcal M$ be a minor-closed class of $\mathbb F$-represented matroids. Then there exist $k,n\in\mathbb{Z}_+$ and frame templates $\Phi_1,\ldots,\Phi_s,\Psi_1,\ldots,\Psi_t$ such that
\begin{enumerate}
\item $\mathcal{M}$ contains each of the classes $\mathcal{M}(\Phi_1),\ldots,\mathcal{M}(\Phi_s)$,
\item $\mathcal{M}$ contains the duals of the represented matroids in each of the classes $\mathcal{M}(\Psi_1),\ldots,\mathcal{M}(\Psi_t)$,
\item if $M$ is a simple vertically $k$-connected member of $\mathcal M$ with an $M(K_n)$-minor but no $PG(m-1,\bFp)$-minor, then $M$ is a member of at least one of the classes $\mathcal{M}(\Phi_1),\ldots,\mathcal{M}(\Phi_s)$, and
\item if $M$ is a cosimple cyclically $k$-connected member of $\mathcal M$ with an $M^*(K_n)$-minor but no $PG(m-1,\bFp)$-minor, then $M^*$ is a member of at least one of the classes $\mathcal{M}(\Psi_1),\ldots,\mathcal{M}(\Psi_t)$.
\end{enumerate}
\end{hypothesis}

\subsection{Consequences}
\label{sub:consequences}
We conclude Section \ref{sec:discussion} by detailing the consequences of our results on previously published papers that were written based on \cite{ggw15}. It should be noted that our results in ~\cite{gvz17} remain correct insofar as they regard templates themselves, as opposed to the applications of Conjecture \ref{con:template}. This means that all of the proofs in \cite[Section 3]{gvz17} remain valid. In particular, the determination of the minimal nontrivial binary frame templates \cite[Theorem 3.19]{gvz17} is accurate. (However, \cite[Definition 3.20]{gvz17} is no longer useful to prove the results in \cite[Section 4]{gvz17}.)

A significant portion of our work in \cite{gvz17}, our forthcoming work, and the work of Nelson and Walsh \cite{nw17} involves the use of the structure theory of Geelen, Gerards, and Whittle to obtain results about the extremal functions (also called growth rate functions) of classes of represented matroids. We will prove that these results are true subject to Hypothesis \ref{hyp:cliquetemplate}. The \emph{extremal function} for a minor-closed class $\mathcal{M}$, denoted by $h_{\mathcal{M}}(r)$, is the function whose value at an integer $r\geq0$ is given by the maximum number of elements in a simple represented matroid in $\mathcal{M}$ of rank at most $r$. We will make use of several results in the literature. The first of these is the Growth Rate Theorem of Geelen, Kung, and Whittle \cite[Theorem 1.1]{gkw09}.

\begin{theorem}[Growth Rate Theorem]
\label{thm:growthrate}
If $\mathcal{M}$ is a nonempty minor-closed class of matroids, then there exists $c\in\mathbb{R}$ such that either:
\begin{itemize}
 \item[(1)] $h_{\mathcal{M}}(r)\leq cr$ for all $r$,
\item[(2)] $\binom{r+1}{2}\leq h_{\mathcal{M}}(r)\leq cr^2$ for all $r$ and $\mathcal{M}$ contains all graphic matroids,
\item[(3)] there is a prime-power $q$ such that $\frac{q^r-1}{q-1}\leq h_{\mathcal{M}}(r)\leq cq^r$ for all $r$ and $\mathcal{M}$ contains all $\mathrm{GF}(q)$-representable matroids, or
\item[(4)] $\mathcal{M}$ contains all simple rank-2 matroids.
\end{itemize}
\end{theorem}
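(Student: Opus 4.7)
The plan is to prove the Growth Rate Theorem by a dichotomy based on two combinatorial parameters of $\mathcal{M}$: the supremum $\ell$ of the sizes of rank-$2$ flats of matroids in $\mathcal{M}$, and, when $\ell$ is finite, the largest $n$ such that some member of $\mathcal{M}$ contains a $\mathrm{PG}(n-1,q)$-minor, where $q$ is the largest prime power with $q+1\leq\ell$. If $\ell$ is unbounded, then $\mathcal{M}$ contains $U_{2,n}$ for every $n$, hence every simple rank-$2$ matroid, so we are in case (4) and done. From here on assume $\ell$ is finite.

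A classical theorem of Kung then gives that every simple matroid in $\mathcal{M}$ of rank $r$ has at most $(\ell^r-1)/(\ell-1)$ elements, and a short structural argument on long lines forces $\ell=q+1$ for some prime power $q$. This yields the exponential upper bound $h_{\mathcal{M}}(r)\leq cq^r$. If in addition $\mathrm{PG}(r-1,q)\in\mathcal{M}$ for every $r$, then the matching lower bound $(q^r-1)/(q-1)\leq h_{\mathcal{M}}(r)$ holds and we are in case (3). Otherwise there is a largest $n$ such that some member of $\mathcal{M}$ has a $\mathrm{PG}(n-1,q)$-minor; at this point I would invoke the earlier Geelen--Kung--Whittle-type structural result that excluding $\mathrm{PG}(n,q)$ as a minor forces the density of members of $\mathcal{M}$ to satisfy $h_{\mathcal{M}}(r)\leq cr^2$. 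Combined with the graphic lower bound $\binom{r+1}{2}\leq h_{\mathcal{M}}(r)$ that holds whenever all $M(K_n)$ lie in $\mathcal{M}$, this places us in case (2).

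To obtain case (1), it remains to analyse what happens when both $\mathrm{PG}(n-1,q)$ and $M(K_n)$ are excluded for some $n$. Here I would use a density bound for frame matroids without a spanning clique minor (which is linear in rank), together with the preceding perturbation/structural reduction, to conclude that every member of $\mathcal{M}$ satisfies $h_{\mathcal{M}}(r)\leq cr$, giving case~(1). The principal obstacle I anticipate is the passage from ``excludes $\mathrm{PG}(n-1,q)$'' to the quadratic upper bound; this is the deepest input, and its proof relies on density-increment arguments together with the branch-width machinery of matroid minors theory, including control over the density of bounded-rank perturbations of frame matroids---precisely the structural regime whose subtlety is underlined by the counterexample in Section~\ref{sec:the_construction}. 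A secondary but nontrivial obstacle is showing the dichotomy is clean, i.e.\ that no minor-closed class can have density strictly between the four growth rates listed; this requires the sharp extremal bounds in each case to match up at their boundaries.
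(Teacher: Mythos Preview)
The paper does not prove this statement at all: Theorem~\ref{thm:growthrate} is quoted verbatim from Geelen, Kung, and Whittle~\cite{gkw09} as background, with no argument supplied. There is therefore nothing in the paper to compare your proposal against.

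As a standalone sketch of the Growth Rate Theorem, your outline captures the coarse case split (unbounded lines $\Rightarrow$ case (4); Kung's bound for the exponential upper envelope; excluding projective geometries to drop to quadratic; excluding cliques to drop to linear), but several specifics are off. First, the assertion that a ``short structural argument on long lines forces $\ell=q+1$ for some prime power $q$'' is not correct as stated: Kung's theorem bounds $h_{\mathcal{M}}(r)$ by $(\ell^r-1)/(\ell-1)$ for the maximal line length $\ell$, but $\ell$ need not be one more than a prime power, and the $q$ appearing in case~(3) is determined not by $\ell$ but by which projective geometries actually occur as minors. Second, and more seriously, the passage from ``$\mathcal{M}$ excludes $\mathrm{PG}(n,q)$'' to a quadratic upper bound is the heart of the theorem, and your proposal defers it to ``density-increment arguments together with the branch-width machinery of matroid minors theory, including control over the density of bounded-rank perturbations of frame matroids.'' This is anachronistic: the Growth Rate Theorem~\cite{gkw09} predates and is logically independent of the perturbation/template framework of~\cite{ggw15}, and certainly does not rely on anything connected to the construction in Section~\ref{sec:the_construction}. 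The actual arguments in~\cite{gkw09} (building on~\cite{gw03} and~\cite{gk09}) use long-line and dense-restriction counting to find projective-geometry or clique minors directly, with no appeal to frame templates or perturbations. Your sketch therefore both overstates the machinery required and leaves the genuinely difficult step as a black box.
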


If (2) of the previous theorem holds for $\mathcal{M}$, then $\mathcal{M}$ is \emph{quadratically dense}. If $M$ is a simple rank-$r$ matroid in $\mathcal{M}$ such that $\varepsilon(M)=h_{\mathcal{M}}(r)$, then we call $M$ an \emph{extremal matroid} of $\mathcal{M}$.

The proof of the Growth Rate Theorem was based on work in \cite{gw03} and \cite{gk09}. Specifically, \cite{gw03} contains the following result.

\begin{theorem}[{\cite[Theorem 1.1]{gw03}}]
 \label{thm:excludegraph}
For any finite field $\mathbb{F}$ and any graph $G$, there exists an integer $c$ such that, if $M$ is an $\mathbb{F}$-represented matroid with no $M(G)$-minor, then $\varepsilon(M)\leq cr(M)$.
\end{theorem}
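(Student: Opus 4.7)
The plan is to reduce the theorem to a density bound for $\mathbb{F}$-representable matroids that exclude a fixed projective geometry, and then to prove that reduced statement by induction on the rank of the excluded geometry. Since $G$ is a minor of the complete graph $K_n$ on $n=|V(G)|$ vertices, any matroid with no $M(K_n)$-minor has no $M(G)$-minor either, so we may assume $G=K_n$. Because $M(K_n)$ is regular, it is representable over $\mathbb{F}$ and so appears as a restriction---hence as a minor---of $\mathrm{PG}(n-2,\mathbb{F})$. Consequently, any matroid with no $M(K_n)$-minor also has no $\mathrm{PG}(n-2,\mathbb{F})$-minor, and the theorem reduces to the following assertion, which I call $(*)$: for every finite field $\mathbb{F}$ and positive integer $s$, there is a constant $c=c(\mathbb{F},s)$ such that every $\mathbb{F}$-representable matroid $M$ with no $\mathrm{PG}(s-1,\mathbb{F})$-minor satisfies $\varepsilon(M)\leq c\cdot r(M)$.

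I would prove $(*)$ by induction on $s$. The base case $s=1$ is trivial: $\mathrm{PG}(0,\mathbb{F})$ is a single non-loop point, so a matroid without it as a minor consists entirely of loops and has $\varepsilon(M)=0$. For the inductive step, let $c_0$ be a constant that works for $s-1$, and suppose $M$ is $\mathbb{F}$-representable, has no $\mathrm{PG}(s-1,\mathbb{F})$-minor, and satisfies $\varepsilon(M)>c_1\cdot r(M)$ for a sufficiently large $c_1$ to be chosen. A pigeonhole argument over non-loop elements produces some $e$ whose simplified contraction $\mathrm{si}(M/e)$ still has density exceeding $c_0$ relative to its rank $r(M)-1$; by the inductive hypothesis, $M/e$ then contains a $\mathrm{PG}(s-2,\mathbb{F})$-minor $N$. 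The remaining task is to lift $N$ through $e$ to produce a $\mathrm{PG}(s-1,\mathbb{F})$-minor of $M$, yielding the desired contradiction.

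The main obstacle is this lifting step. For each point $p$ of $N$, the preimage of $p$ in $M$ under the contraction of $e$ is a set of pairwise parallel elements together with $e$, forming an affine line in the projective sense, and to build a $\mathrm{PG}(s-1,\mathbb{F})$-minor one needs enough elements in \emph{each} such fiber to fill out the extra projective dimension. I would address this via supersaturation: choose $c_1$ much larger than $c_0$, schematically $c_1\geq|\mathbb{F}|^{O(s)}c_0$, so that after a pigeonhole classification of preimage elements according to their coset labels modulo $N$, enough elements survive in each fiber to complete a full $\mathrm{PG}(s-1,\mathbb{F})$ structure. Kung's bound $\varepsilon(M)\leq(|\mathbb{F}|^{r(M)}-1)/(|\mathbb{F}|-1)$ keeps the induction terminating, since for $s>r(M)$ the statement $(*)$ is vacuous. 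This quantitative lifting, rather than the reduction to projective geometries, is where the real difficulty lies.
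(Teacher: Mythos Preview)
This theorem is not proved in the paper; it is quoted verbatim from Geelen and Whittle \cite{gw03} and used as a black box. So there is no proof in the paper to compare your proposal against. That said, your proposed argument has a genuine gap.

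Your reduction to the statement $(*)$ is logically sound in the sense that $\{M:\text{no }M(K_n)\text{-minor}\}\subseteq\{M:\text{no }\mathrm{PG}(n-2,\mathbb{F})\text{-minor}\}$, so a linear density bound for the latter class would imply one for the former. The problem is that $(*)$ is simply \emph{false} when $\mathbb{F}$ is not a prime field. Take $\mathbb{F}=\mathrm{GF}(q^2)$ and $s=2$, so that $\mathrm{PG}(1,\mathbb{F})\cong U_{2,q^2+1}$. Every $\mathrm{GF}(q)$-representable matroid is $\mathbb{F}$-representable and has no $U_{2,q+2}$-minor, hence no $\mathrm{PG}(1,\mathbb{F})$-minor; yet this class contains $\mathrm{PG}(r-1,q)$ for all $r$, whose number of points is $(q^r-1)/(q-1)$, which is exponential in $r$. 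Thus no constant $c$ can witness $(*)$ in this case. Your lifting step is exactly where this shows up: the fibers over points of your $\mathrm{PG}(s-2,\mathbb{F})$-minor may all be ``short'' lines living in a proper subfield, and no amount of supersaturation produces a line of full length $|\mathbb{F}|+1$.

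The actual Geelen--Whittle argument cannot, therefore, route through excluded projective geometries over $\mathbb{F}$; it must exploit the specific structure of $M(K_n)$ (and more generally of graphic matroids) in a way that survives the presence of subfield-confined matroids of exponential density. If you want to salvage an inductive density argument, the right excluded object to track is a clique (or, over prime fields only, a projective geometry over the prime subfield), not $\mathrm{PG}(s-1,\mathbb{F})$.
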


Geelen and Nelson proved the next result. (In fact, their result is a bit more detailed, but the following result follows from theirs.)
\begin{theorem}[{\cite[Theorem 6.1]{gn15}}]
\label{thm:geelen-nelson}
Let $\mathcal{M}$ be a quadratically dense minor-closed class of matroids and let $p(x)$ be a real quadratic polynomial with positive leading coefficient. If $h_{\mathcal{M}}(n)>p(n)$ for infinitely many $n\in\mathbb{Z}^+$, then for all integers $r,s\geq1$ there exists a vertically $s$-connected matroid $M\in\mathcal{M}$ satisfying $\varepsilon(M)>p(r(M))$ and $r(M)\geq r$.
\end{theorem}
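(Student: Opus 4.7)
The plan is to argue by contradiction: suppose that for some $r_1, s \ge 1$, every vertically $s$-connected matroid $M \in \mathcal{M}$ with $r(M) \ge r_1$ satisfies $\varepsilon(M) \le p(r(M))$. I will deduce $h_{\mathcal{M}}(n) \le p(n)$ for all sufficiently large $n$, contradicting the standing hypothesis. The central tool is a decomposition along a vertical separation: if a simple $M \in \mathcal{M}$ is not vertically $s$-connected, pick a separation $(A,B)$ of order $k' < s$ with $r(A), r(B) < r(M)$, and set $M_A := M|\operatorname{cl}(A)$ and $M_B := M|\operatorname{cl}(B)$. These are simple minors of $M$ in $\mathcal{M}$, with ranks $r(A), r(B)$ satisfying $r(A) + r(B) \le r(M) + s - 1$ and both at most $r(M) - 1$; since every element of $E(M) = A \cup B$ lies in $\operatorname{cl}(A) \cup \operatorname{cl}(B)$, we obtain $\varepsilon(M) \le \varepsilon(M_A) + \varepsilon(M_B)$.

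Write $p(x) = \alpha x^2 + \beta x + \gamma$ with $\alpha > 0$, fix a large threshold $N_0 \ge r_1$ (to be specified), and consider a minimum counterexample: a simple $M \in \mathcal{M}$ of rank $n \ge 2 N_0$ with $\varepsilon(M) > p(n)$ and $n$ as small as possible. If $M$ is vertically $s$-connected, the standing assumption directly contradicts $\varepsilon(M) > p(n)$. Otherwise, decompose $M$ as above. If both $r_A, r_B \ge N_0$, then by minimality $\varepsilon(M_A) \le p(r_A)$ and $\varepsilon(M_B) \le p(r_B)$; convexity of $p$ (with $r_A + r_B \le n + s - 1$ and both at most $n - 1$) forces $p(r_A) + p(r_B) \le p(s) + p(n - 1) = p(n) - 2\alpha n + O_{s,p}(1)$, which is strictly less than $p(n)$ for $n$ large. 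If one of $r_A, r_B$ is less than $N_0$, its contribution is bounded by the constant $h_{\mathcal{M}}(N_0 - 1)$ while the other side still saves $2 \alpha n$; the case where both are below $N_0$ is excluded by $n \ge 2 N_0$. Either way, $\varepsilon(M) \le \varepsilon(M_A) + \varepsilon(M_B) < p(n)$, contradicting $\varepsilon(M) > p(n)$.

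The main subtlety is the quantitative choice of $N_0$: one must take it large enough that the linear savings $2 \alpha n$ (for $n \ge 2 N_0$) exceed all the accumulated constant errors, namely $p(s)$, $h_{\mathcal{M}}(N_0 - 1)$, and the lower-order terms from $p(n-1) = p(n) - 2\alpha n + O(1)$. Since $p$ is quadratic with positive leading coefficient and the constants depend only on $r_1$, $s$, $p$, and the earlier values of $h_{\mathcal M}$, such an $N_0$ exists. With it in hand the minimum-counterexample argument closes, giving $h_{\mathcal{M}}(n) \le p(n)$ for every $n \ge 2 N_0$, the desired contradiction.
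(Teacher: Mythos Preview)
The paper does not prove this theorem; it is quoted from Geelen and Nelson \cite{gn15} and used as a black box. So there is no ``paper's proof'' to compare against, and your attempt must be judged on its own merits.

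Your argument has a genuine gap. First a technical point: your minimum counterexample has rank $n \ge 2N_0$, so minimality only yields $h_{\mathcal{M}}(m) \le p(m)$ for $2N_0 \le m < n$; yet in the case $r_A, r_B \ge N_0$ you invoke minimality to get $\varepsilon(M_A) \le p(r_A)$, which is unjustified when $r_A \in [N_0, 2N_0)$.

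The deeper problem is that the choice of $N_0$ is circular. You require $N_0 \ge r_1$, and $r_1$ comes from the negated conclusion, so it may be arbitrarily large. In the ``one side small'' case you bound $\varepsilon(M_A) \le h_{\mathcal{M}}(N_0 - 1)$ and need the linear savings $2\alpha n$, with $n \ge 2N_0$, to absorb this constant. But $\mathcal{M}$ is quadratically dense, so by the Growth Rate Theorem $h_{\mathcal{M}}(N_0 - 1) \ge \binom{N_0}{2}$; the resulting requirement $4\alpha N_0 > \binom{N_0}{2} + O(1)$ fails once $N_0$ exceeds roughly $8\alpha$. Hence there is no value of $N_0$ satisfying both $N_0 \ge r_1$ and your absorption inequality when $r_1$ is large. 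Aligning the two thresholds does not help: making the case-split threshold $2N_0$ only replaces $h_{\mathcal{M}}(N_0 - 1)$ by the still larger $h_{\mathcal{M}}(2N_0 - 1)$. A single-step vertical decomposition combined with a minimum-rank counterexample is too coarse here; the proof in \cite{gn15} tracks density through the decomposition more carefully.
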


The next result is from Nelson and Walsh \cite{nw17}. It is accurate since it was proved independently of \cite{ggw15}.

\begin{lemma}[{\cite[Lemma 2.2]{nw17}}]
\label{lem:vertextremal}
Let $\mathbb{F}$ be a finite field, let $f(x)$ be a real quadratic polynomial with positive leading coefficient, and let $k\in\mathbb{N}_0$. If $\mathcal{M}$ is a restriction-closed class of $\mathbb{F}$-represented matroids and if, for all sufficiently large $n$, the extremal function of $\mathcal{M}$ at $n$ is given by $f(n)$, then for all sufficiently large $r$, every rank-$r$ matroid $M\in\mathcal{M}$ with $\varepsilon(M)=f(r)$ is vertically $k$-connected.
\end{lemma}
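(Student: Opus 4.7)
The plan is a short proof by contradiction against the quadratic extremal function. Suppose that for arbitrarily large $r$ there is a rank-$r$ matroid $M\in\mathcal{M}$ with $\varepsilon(M)=f(r)$ that fails to be vertically $k$-connected. Then $M$ admits a partition $(A,B)$ of $E(M)$ with $r(A)+r(B)-r(M)<k$ and with neither $A$ nor $B$ spanning $M$. Write $r_1=r(A)$, $r_2=r(B)$; then $r_1,r_2\leq r-1$ and $r_1+r_2\leq r+k-1$.

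Because $\mathcal{M}$ is restriction-closed, $M|A$ and $M|B$ lie in $\mathcal{M}$. Since each rank-$1$ flat $F$ of $M$ meets $A$ or $B$, and in each case the intersection is a rank-$1$ flat of the respective restriction (parallel classes in $M$ restrict to parallel classes in $M|A$, $M|B$), we get
\[
\varepsilon(M)\ \leq\ \varepsilon(M|A)+\varepsilon(M|B)\ \leq\ h_{\mathcal{M}}(r_1)+h_{\mathcal{M}}(r_2).
\]
Fix a threshold $T$ with $h_{\mathcal{M}}(n)=f(n)$ for all $n\geq T$. Since $\mathbb{F}$ is finite, there is a constant $C$ (depending on $T$ and $|\mathbb{F}|$) such that $h_{\mathcal{M}}(n)\leq C$ for every $n<T$.

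The heart of the argument is a purely quadratic estimate. Writing $f(x)=ax^2+bx+c$ with $a>0$, the function $f(r_1)+f(r_2)$ on the compact region $\{(r_1,r_2):0\leq r_1,r_2\leq r-1,\ r_1+r_2\leq r+k-1\}$ attains its maximum at a vertex of that region, namely $r_1=r-1,\ r_2=k$. Thus when both $r_1,r_2\geq T$,
\[
\varepsilon(M)\ \leq\ f(r-1)+f(k)\ =\ f(r)-\bigl(2ar - a + b(k-1) + ak^2\bigr) + (\text{constant}),
\]
which is strictly less than $f(r)$ once $r$ is large enough (since $a>0$). If instead $r_1<T$ (say), then $h_{\mathcal{M}}(r_1)\leq C$ and $r_2\leq r-1$, so $\varepsilon(M)\leq C + h_{\mathcal{M}}(r-1)\leq C + f(r-1)$, which is likewise less than $f(r)$ for large $r$ because $f(r)-f(r-1)=2ar-a+b\to\infty$. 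The degenerate case in which both $r_1,r_2<T$ forces $r\leq 2T+k-1$, so it occurs only for finitely many $r$ and can be discarded.

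The only obstacle is the bookkeeping to combine these cases uniformly in $k$, but this reduces to the elementary observation that $f(r)$ grows faster than the maximum of $f(r-1)+f(k)$ and $f(r-1)+C$, which is true for large enough $r$ because $f$ is quadratic with positive leading coefficient. Choosing $r$ large enough that in every case $\varepsilon(M)<f(r)$ contradicts the assumption $\varepsilon(M)=f(r)$, so every such extremal matroid must be vertically $k$-connected.
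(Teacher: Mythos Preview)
The paper does not give its own proof of this lemma; it simply quotes it from \cite[Lemma~2.2]{nw17} and remarks that it was proved there independently of \cite{ggw15}. So there is nothing in the present paper to compare against.

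Your argument is the natural one and is correct. A couple of small points of tidiness:

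\begin{itemize}
\item Your assertion that the maximum of $f(r_1)+f(r_2)$ over the polytope occurs at $(r-1,k)$ is not literally true in general: the vertices of the region are $(0,0)$, $(r-1,0)$, $(0,r-1)$, $(r-1,k)$, $(k,r-1)$, and the maximum is $f(r-1)+\max\{f(0),f(k)\}$, which need not be $f(r-1)+f(k)$ if $f(0)>f(k)$. This does not matter, since either constant is swallowed by $f(r)-f(r-1)=2ar-a+b\to\infty$.
\item The displayed arithmetic $f(r-1)+f(k)=f(r)-(2ar-a+b(k-1)+ak^2)+(\text{constant})$ is garbled (a sign on $b(k-1)$ is off and $c$ has been absorbed into ``constant''), but again only the $2ar$ term matters and you identify that correctly.
\item A slightly cleaner way to avoid the three-way case split is to note once that there is a constant $D\ge 0$ with $h_{\mathcal{M}}(n)\le f(n)+D$ for all $n\ge 0$ (take $D=\max_{n<T}\max\{0,\,h_{\mathcal{M}}(n)-f(n)\}$, finite since $\mathbb{F}$ is finite), and then bound $\varepsilon(M)\le f(r_1)+f(r_2)+2D\le f(r-1)+\max\{f(0),f(k)\}+2D$, which is $<f(r)$ for large $r$.
\end{itemize}

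With these cosmetic fixes your proof is complete and is essentially the argument one would expect in \cite{nw17}.
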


Nelson and Walsh ~\cite{nw17} define a pair of templates $\Phi,\Phi'$ to be \emph{equivalent} if $\mathcal{M}(\Phi)=\mathcal{M}(\Phi')$. This is the definition we will use in the remainder of this paper, although it should be noted that we defined a different notion of equivalence in ~\cite{gvz17}. A pair of templates that are equivalent in the sense of ~\cite{nw17} are also equivalent in the sense of ~\cite{gvz17}, but the converse is not true. Moreover, Nelson and Walsh gave Definition \ref{def:Y-reduced} and proved Lemma \ref{lem:all-Y-reduced} below. Lemma \ref{lem:all-Y-reduced} is a result about templates in their own right, rather than applications of Conjecture \ref{con:template}, and is true for that reason.

\begin{definition}
 \label{def:Y-reduced}
A frame template $\Phi=(\Gamma,C,X,Y_0,Y_1,A_1,\Delta,\Lambda)$ over $\mathbb{F}$ is \emph{$Y$-reduced} if $\Delta[C]=\Gamma(\mathbb{F}_p^C)$ and $\Delta[Y_0\cup Y_1]=\{0\}$, and there is a partition $(X_0,X_1)$ of $X$ for which $\mathbb{F}_p^{X_0}\subseteq\Lambda[X_0]$ and $\Lambda[X_1]=\{0\}$. We will call the partition $X=X_0\cup X_1$ the \emph{reduction partition} of $\Phi$.
\end{definition}

\begin{lemma}[{\cite[Lemma 5.5]{nw17}}]
 \label{lem:all-Y-reduced}
Every frame template is equivalent to a $Y$-reduced frame template.
\end{lemma}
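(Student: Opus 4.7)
The plan is to reduce an arbitrary frame template $\Phi=(\Gamma,C,X,Y_0,Y_1,A_1,\Delta,\Lambda)$ to $Y$-reduced form by a sequence of modifications that preserve the class $\mathcal{M}(\Phi)$. Three kinds of operations will do the work: (a) an invertible row operation $T$ on the $X$-block replaces $(A_1,\Lambda)$ by $(TA_1,T\Lambda)$ without changing which matroids conform; (b) a column operation on $C\cup Y_0\cup Y_1$ can be absorbed by a corresponding modification of $A_1$ and $\Delta$; (c) enlarging $X$ or $C$ by new ``trivial'' coordinates can be inverted later because the $C$-columns are contracted and the added $X$-rows are chosen to be zero on every relevant column. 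I would normalize $\Lambda$ and $\Delta$ separately, since the operations involved act on disjoint parts of the template.

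For $\Lambda$, first I would use row operations of type (a) to put $\Lambda$ in block form: choose a decomposition $X=X_0\sqcup X_1$ so that the projection $\Lambda\to\mathbb{F}^{X_1}$ is zero and the projection $\Lambda\to\mathbb{F}^{X_0}$ is injective (such a decomposition exists because $\Lambda$ is a $\Gamma$-closed subgroup of $\mathbb{F}^X$, hence an $\mathbb{F}_p[\Gamma]$-submodule). After this step, $\Lambda[X_1]=\{0\}$. To force $\mathbb{F}_p^{X_0}\subseteq\Lambda[X_0]$, I would enlarge $\Lambda$ by adjoining all of $\mathbb{F}_p^{X_0}$. The key point is that this enlargement does not change $\mathcal{M}(\Phi)$: any $X_0$-column of $A'$ taken from $\mathbb{F}_p^{X_0}\setminus\Lambda$ is a sum of columns that are already in $\Lambda$ and columns that look like single $\mathbb{F}_p$-scaled unit vectors on $X_0$, and the latter can be realized instead as columns of the $\Gamma$-frame submatrix with a single nonzero entry in $X_0$. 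Thus every matrix respecting the enlarged template also respects a matrix equivalent to one respecting the original.

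For $\Delta$, I would first use column operations of type (b) to arrange $\Delta[Y_0\cup Y_1]=\{0\}$: any $\delta\in\Delta$ with a nonzero entry on some $y\in Y_0\cup Y_1$ can be pivoted to push its $Y_0\cup Y_1$-content into new columns adjoined to $C$; since $C$ is contracted in $M(A')/C\ba Y_1$, the added coordinates are harmless, and $A_1$ is updated accordingly so that $\mathcal{M}(\Phi)$ is unchanged. Then, to force $\Delta[C]=\Gamma(\mathbb{F}_p^C)$, I would enlarge $C$ by additional contractible coordinates and extend $\Delta$ on those coordinates to realize exactly $\Gamma\mathbb{F}_p^C$; again, this alters no conforming matroid because the new $C$-coordinates are contracted away. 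The main obstacle I anticipate is verifying, at each stage, that the two inclusions $\mathcal{M}(\Phi)\subseteq\mathcal{M}(\Phi')$ and $\mathcal{M}(\Phi')\subseteq\mathcal{M}(\Phi)$ really do hold; the first direction typically comes for free by exhibiting a valid $A'$, but the reverse requires rewriting an arbitrary conforming $A$ into the normalized form, which is where careful bookkeeping of how row and column operations transform $A_1$ becomes essential. Performing the $\Lambda$-normalization entirely before the $\Delta$-normalization keeps these two bookkeeping tasks independent.
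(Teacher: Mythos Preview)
The paper does not prove this lemma; it is quoted verbatim from Nelson and Walsh \cite[Lemma~5.5]{nw17}, so there is no in-paper argument to compare against. Your proposal therefore has to stand on its own, and it contains a genuine gap in the $\Lambda$-normalization step.

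Your plan is to choose $X_0\sqcup X_1$ so that $\Lambda[X_1]=\{0\}$, and then to \emph{enlarge} $\Lambda$ by adjoining $\mathbb{F}_p^{X_0}$, arguing that the new columns ``can be realized instead as columns of the $\Gamma$-frame submatrix with a single nonzero entry in $X_0$''. This is incorrect: by the definition of a matrix respecting $\Phi$, the $\Gamma$-frame submatrix is $A'[B-X,\,E-(C\cup Y_0\cup Y_1\cup Z)]$, so its rows are indexed by $B-X$, not by $X$. No column of the frame block can have a nonzero entry in any $X_0$-row. Consequently, adjoining $\mathbb{F}_p^{X_0}$ to $\Lambda$ genuinely adds new allowable $X$-parts of columns and, in general, strictly enlarges $\mathcal{M}(\Phi)$. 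A small example makes this concrete: with $\mathbb{F}=\mathrm{GF}(4)$, $\Gamma=\{1\}$, $X=\{x_1,x_2\}$, and $\Lambda=\{(0,0),(1,\omega)\}$, your decomposition forces $X_0=X$, but $\Lambda[X_0]=\Lambda$ has only two elements while $\mathbb{F}_2^{X_0}$ has four; enlarging $\Lambda$ here allows columns with $X$-part $(1,0)$ that were previously forbidden and cannot be simulated elsewhere in the template.

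The actual mechanism Nelson and Walsh use to achieve $\mathbb{F}_p^{X_0}\subseteq\Lambda[X_0]$ is not an enlargement of $\Lambda$ but a more careful choice of the partition together with invertible $\mathbb{F}$-row operations on the $X$-block (your operation~(a)), which transform $\Lambda$ rather than extend it. Your $\Delta$-normalization sketch is closer to the mark, though you should also check that column operations involving $Y_1$ interact correctly with the construction of $Z$-columns, since each $Z$-column is built as a $Y_1$-column of $A'$ plus a unit vector in $B-X$.
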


The next lemma is an easy observation.

\begin{lemma}
 \label{lem:no_copies}
Every frame template is equivalent to a $Y$-reduced frame template such that no column of $A_1[X,Y_1]$ is contained in $\Lambda$.
\end{lemma}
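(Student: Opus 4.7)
The plan is to start from Lemma \ref{lem:all-Y-reduced}, which gives a $Y$-reduced template $\Phi = (\Gamma, C, X, Y_0, Y_1, A_1, \Delta, \Lambda)$ equivalent to the given one, and then prune $Y_1$ one element at a time: whenever some $y \in Y_1$ satisfies $A_1[X, y] \in \Lambda$, I would replace $\Phi$ by the template $\Phi' := (\Gamma, C, X, Y_0, Y_1 \setminus \{y\}, A_1', \Delta', \Lambda)$, where $A_1'$ is $A_1$ with its $y$-column removed and $\Delta'$ is the projection of $\Delta$ away from the $y$-coordinate. Iteration terminates in at most $|Y_1|$ steps. That $\Phi'$ is $Y$-reduced is immediate: $\Delta[Y_0 \cup Y_1] = \{0\}$ already gives $\Delta'[Y_0 \cup (Y_1 \setminus \{y\})] = \{0\}$, the condition $\Delta'[C] = \Gamma(\bFp^C)$ is unchanged, and $\Lambda$ together with the reduction partition $(X_0, X_1)$ of $X$ is carried over untouched.

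The core is the equivalence $\mathcal{M}(\Phi) = \mathcal{M}(\Phi')$. The key structural observation is that $Y$-reducedness forces $A'[B - X, Y_1] = 0$ for every matrix $A'$ respecting $\Phi$, so the column of $A'$ at any $j \in Y_1$ is $A_1[X, j]$ on $X$ and $0$ on $B - X$; consequently, for each $i \in Z$ whose conforming rule uses $j$, the $i$-th column of the conforming matrix $A$ equals $A_1[X, j]$ on $X$ and a unit vector on $B - X$. For the inclusion $\mathcal{M}(\Phi) \subseteq \mathcal{M}(\Phi')$, given $A$ conforming to $\Phi$, I would delete the $y$-column to obtain $\tilde A$ and set $Z' := \{i \in Z : \text{the rule for } i \text{ uses some } j \ne y\}$. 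Each column $i \in Z \setminus Z'$ of $\tilde A$ has $X$-part $A_1[X, y] \in \Lambda$ and unit $B - X$-part, so it is a legitimate ``main-part'' column for $\Phi'$; placing such columns in the main block yields a witness $\tilde A'$ respecting $\Phi'$ with $Z$-set $Z'$, and the identity $M(\tilde A)/C \setminus (Y_1 \setminus \{y\}) = M(A)/C \setminus Y_1$ is clear since $A$ and $\tilde A$ differ only by deletion of the $y$-column, which is in any case deleted from the matroid. The reverse inclusion $\mathcal{M}(\Phi') \subseteq \mathcal{M}(\Phi)$ is routine: append the column equal to $A_1[X, y]$ on $X$ and $0$ on $B - X$ back into the $y$-slot of any matrix conforming to $\Phi'$.

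The main obstacle is the formal bookkeeping: verifying the five defining clauses of ``respects'' for $\tilde A'$ relative to $\Phi'$ while simultaneously deleting a column from $A_1$, restricting $\Delta$, reclassifying the columns indexed by $Z \setminus Z'$ as main-part columns, and shrinking $Z$ to $Z'$. Each individual check is direct given the $Y$-reduced hypothesis, but coherence between the conforming and respecting matrices requires careful choice of the new $Z$-set and the new main-block structure. No further ideas are needed once one notices that, in a $Y$-reduced template, a $Y_1$-column contributes only its $X$-part to the sum at each $i \in Z$, so an $X$-part lying in $\Lambda$ makes that contribution already achievable directly in the main block, rendering $y$ redundant.
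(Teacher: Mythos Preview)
Your proposal is correct and follows essentially the same approach as the paper: start from a $Y$-reduced template via Lemma~\ref{lem:all-Y-reduced}, then observe that any $y\in Y_1$ with $A_1[X,y]\in\Lambda$ is redundant because the $Z$-columns it generates (which, by $Y$-reducedness, have $X$-part $A_1[X,y]$ and unit $B-X$-part) already occur as legitimate main-block columns, so $y$ may be removed without changing $\mathcal{M}(\Phi)$. Your write-up is in fact more careful than the paper's, explicitly invoking $\Delta[Y_0\cup Y_1]=\{0\}$ to justify $A'[B-X,Y_1]=0$ and verifying both inclusions, whereas the paper leaves these as implicit.
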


\begin{proof}
By Lemma \ref{lem:all-Y-reduced}, every template is equivalent to some $Y$-reduced template $\Phi=(\Gamma,C,X,Y_0,Y_1,A_1,\Delta,\Lambda)$. Note that the role of $Y_1$ in matroids conforming to $\Phi$ is to construct the set $Z$. Every element of $Z$ indexes a column constructed by placing a column of $A_1[X,Y_1]$ on top of an identity column. If such a column is made from a column of $A_1[X,Y_1]$ that is a copy of an element of $\Lambda$, then the column can also be obtained in $E-(Z\cup C\cup Y_0\cup Y_1)$ by choosing an identity column for the portion of the column coming from the $\Gamma$-frame matrix. Thus, that element of $Y_1$ is unnecessary, and a template equivalent to $\Phi$ can be obtained from $\Phi$ by removing that element of $Y_1$.
\end{proof}

We will now show that, for all sufficiently large ranks, the extremal function for the set of matroids conforming to a frame template is given by a quadratic polynomial. We will call a largest simple matroid of a given rank that conforms to a template an \emph{extremal matroid} of the template.

\begin{lemma}
 \label{lem:polynomial}
Let $\Phi=(\Gamma,C,X,Y_0,Y_1,A_1,\Delta,\Lambda)$ be a $Y$-reduced frame template, with reduction partition $X=X_0\cup X_1$, such that no column of $A_1[X,Y_1]$ is contained in $\Lambda$. Let $|\widehat{Y_0}|$ denote the number of columns of $A_1[X,Y_0]$ that are not contained in $\Lambda$. Let $|\widehat{\Lambda}|$ denote the maximum number of nonzero elements of $\Lambda$ that pairwise are not scalar multiples of each other. And let $t$ denote the difference between $|X_1|$ and the rank of the matrix $A_1[X_1,C\cup Y_0\cup Y_1]$. If $r\geq 2|C|+|X|-t+2$, then the size of a rank-$r$ extremal matroid of $\Phi$ is $ar^2+br+c$, where
 \[a=\frac{1}{2}|\Gamma||\Lambda|,\]
\[b=\frac{1}{2}|\Gamma||\Lambda|(2|C|+2t-2|X|-1)+|\Lambda|+|Y_1|,\] and 
\[c=\frac{1}{2}(|C|+t-|X|)[|\Gamma||\Lambda|(|C|+t-|X|-1)+2|\Lambda|+2|Y_1|]+|\widehat{\Lambda}|+|\widehat{Y_0}|.\]
\end{lemma}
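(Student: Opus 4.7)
The plan is to construct an extremal rank-$r$ conforming matroid and count its rank-$1$ flats, then express the count as a polynomial in $r$. Set $n := r + |C| + t - |X|$; the hypothesis $r \geq 2|C| + |X| - t + 2$ is equivalent to $n \geq 3|C| + 2$, leaving ample room in $B - X$ for the construction to proceed. Build a conforming matrix $A$ with $|B - X| = n$ that realizes every projective class allowed by the template: for each unordered pair $\{i, j\} \subseteq B - X$, each $\gamma \in \Gamma$, and each $\lambda \in \Lambda$, a ``Rest'' column with bottom $e_i - \gamma e_j$ and top $\lambda$; for each $i \in B - X$ and each $\lambda \in \Lambda$, the column $(\lambda, e_i)$; a set of representatives for the $|\widehat{\Lambda}|$ projective classes of nonzero $\Lambda$-vectors, with zero bottom; the prescribed $|Y_0|$, $|Y_1|$, and $|C|$ columns, choosing the $\Delta$-rows so that $A[R_C, C]$ is the identity for some $R_C \subseteq B - X$ of size $|C|$ (possible since $\mathbb{F}_p^C \subseteq \Delta[C]$); and for each $j \in Y_1$ and $i \in B - X$, a $Z$-column with bottom $e_i$ and top $A_1[X, \{j\}]$.

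To see $r(M(A)/C \backslash Y_1) = r$, observe that the $X_0$ and $B - X$ rows of $A$ are linearly independent in $\mathbb{F}^E$: unit-vector frame columns distinguish the $B - X$ rows, while the $|\widehat{\Lambda}|$ zero-bottom $\Lambda$-representatives span $\Lambda[X_0]$, which contains $\mathbb{F}_p^{X_0}$ and hence spans $\mathbb{F}^{X_0}$. The $X_1$ rows have support contained in $C \cup Y_0 \cup Y_1 \cup Z$, and their effective rank equals $\mathrm{rank}(A_1[X_1, C \cup Y_0 \cup Y_1]) = |X_1| - t$, independent of the previously counted dimensions. Thus $r(M(A)) = |X_0| + n + (|X_1| - t) = |X| + n - t$. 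Each $Y_1$-column equals the difference of a $Z$-column and a unit-vector frame column, so deleting $Y_1$ preserves rank. The $C$-columns are independent via the pivot $R_C$, so contracting $C$ drops the rank by $|C|$, giving $r(M(A)/C \backslash Y_1) = n + |X| - t - |C| = r$.

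Next, count the projective classes of $M(A)/C \backslash Y_1$. Scaling by $\mathbb{F}^\times$ fixes the ``$1$''-position in a $\Gamma$-frame column and the unit-vector position in a $Z$-column, and the hypothesis that no column of $A_1[X, Y_1]$ lies in $\Lambda$ prevents collisions between $Z$-columns and unit-vector frame columns. The six categories of columns above contribute $\binom{n}{2}|\Gamma||\Lambda|$, $n|\Lambda|$, $|\widehat{\Lambda}|$, $|\widehat{Y_0}|$, $0$ (for $Y_1$, which is deleted), and $n|Y_1|$ distinct projective classes respectively. Substituting $n = r - |X| + |C| + t$ into the total $\binom{n}{2}|\Gamma||\Lambda| + n|\Lambda| + |\widehat{\Lambda}| + |\widehat{Y_0}| + n|Y_1|$ and expanding as a polynomial in $r$ yields $ar^2 + br + c$ with the stated coefficients. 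A matching upper bound follows by a pigeonhole argument: in any conforming matrix, every column belongs to one of the same six categories, and each category admits at most the stated number of projective classes.

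The main obstacle is justifying rigorously that contracting $C$ and deleting $Y_1$ neither merges distinct projective classes nor drops any below the counted total. The pivots $R_C$ alter the $X$-top of every column whose bottom meets $R_C$, and in principle this could produce collisions. The bound $n \geq 3|C| + 2$ is what is needed: after removing the $|C|$ pivot rows from $B-X$, the remaining $n - |C| \geq 2|C| + 2$ rows support enough frame and $Z$-columns that every class has an ``untouched'' representative on $B - X - R_C$ that survives the contraction unchanged, so the upper and lower bounds coincide exactly.
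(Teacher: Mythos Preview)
Your approach matches the paper's: count the distinct column types in a maximally populated conforming matrix, take $C$ independent so that $n = r + |C| + t - |X|$, and then check that contracting $C$ creates no parallel pairs. The category counts and the rank computation are essentially as in the paper.

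The final paragraph, however, has a genuine gap. You build $A[B-X,C]$ as a single identity block on some $R_C \subseteq B-X$, and then argue that after removing the $|C|$ pivot rows the remaining $n-|C|$ rows ``support enough frame and $Z$-columns that every class has an untouched representative.'' But this does not establish what is needed. You have already counted $\binom{n}{2}|\Gamma||\Lambda|$ two-entry frame columns over all $n$ rows, and you must show that \emph{all} of these remain pairwise non-parallel after contracting $C$. An ``untouched representative'' argument at best accounts for the $\binom{n-|C|}{2}|\Gamma||\Lambda|$ columns supported on $(B-X)-R_C$, which does not match your upper bound. Concretely, with only a single identity pivot block, two columns $(\lambda,\,e_i-\gamma e_j)$ and $(\lambda',\,e_i-\gamma' e_j)$ with $i\in R_C$ may become parallel after the pivot on row $i$, since both bottoms collapse to scalar multiples of $e_j$.

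The paper closes this gap by choosing the $\Delta$-rows much more carefully: it sets $A[B-X,C]$ equal to \emph{three} stacked copies of $I_{|C|}$ together with two all-ones rows (this is precisely where $n\ge 3|C|+2$ is used). It then proves, by induction on $|C|$, that for every pair $\{e,f\}\subseteq E-(C\cup Y_1)$ the set $C\cup\{e,f\}$ is independent in $M(A)$: since the bottoms of $e$ and $f$ together occupy at most four rows of $B-X$, some column of $A[B-X,C\cup\{e,f\}]$ retains a unit row in $C$, allowing the inductive peel-off. This independence statement is exactly what guarantees that $e$ and $f$ are non-parallel in $M(A)/C$, so contraction removes only the $|C|$ elements of $C$ and $\varepsilon(M)=\varepsilon(M(A)\backslash Y_1)-|C|$ on the nose. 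Your single-identity construction does not support this argument, and the ``untouched representative'' heuristic does not substitute for it.
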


\begin{proof}
An extremal matroid $M$ of  $\Phi$ is obtained by contracting $C$ and deleting $Y_1$ from the vector matroid of some matrix $A$ that conforms to $\Phi$. Let $r_C=r_{M(A)}(C)$. Then $r(M(A)\backslash Y_1)=r+r_C$, and the number of rows of $A$ is $r+r_C+t$. We wish to calculate the largest possible size of a simple matroid of the form $M(A)\backslash Y_1$, where $A$ conforms to $\Phi$ and where $r(M(A)\backslash Y_1/C)=r$. Since $A$ has $r+r_C+t$ rows, the number of rows of the $\Gamma$-frame submatrix of $A$ is $r+r_C+t-|X|$, which we abbreviate as $n$. Let $n\geq1$. Thus, $A$ has at least $|X|+1$ rows and rank at least $|X|-t+1$, and $r\geq |X|-r_C-t+1$.

In $E-(Z\cup C\cup Y_0\cup Y_1)$, there are $|\Gamma||\Lambda|\binom{n}{2}$ distinct possible columns where the $\Gamma$-frame matrix has two nonzero entries per column. There are $|\Lambda|n$ distinct possible columns where the $\Gamma$-frame matrix has one nonzero entry per column. And there are $|\widehat{\Lambda}|$ distinct possible nonzero columns where the $\Gamma$-frame matrix is a zero column because including all of the elements of $\Lambda$ would result in a matroid that is not simple.

The size of $Z$ is at most $|Y_1|n$ since there are that many possible distinct possible columns.

The entire sets $C$ and $Y_0$ are always contained in $M(A)$, but if any columns of $A_1[X,Y_0]$ are contained in $\Lambda$, then the corresponding element of $E-(Z\cup C\cup Y_0\cup Y_1)$ must be deleted in order for the matroid to be simple. Therefore, adding together the elements of $E-(Z\cup C\cup Y_0\cup Y_1)$, the elements of $Z$, and the elements of $C\cup Y_0$, we see that \[\varepsilon(M(A)\backslash Y_1)=|\Gamma||\Lambda|\binom{n}{2}+|\Lambda|n+|\widehat{\Lambda}|+|Y_1|n+|C|+|\widehat{Y_0}|.\]

If $C=\emptyset$, then $M(A)\backslash Y_1=M(A)\backslash Y_1/C$. Keeping in mind that $n=r+r_C+t-|X|$, some arithmetic shows that this proves the result in the case where $C=\emptyset$. Thus, we now assume $C\neq\emptyset$.

One can see that $\varepsilon(M(A)\backslash Y_1)$ increases as $r_C$ increases, since $n=r+r_C+t-|X|$. Thus, to achieve maximum density, we should take $C$ to be independent, if possible. This can easily be achieved since $\Delta[C]=\Gamma(\mathbb{F}_p^C)$. Thus, we take $r_C$ to be equal to $|C|$ and $n=r+|C|+t-|X|$. In fact, let $A[B-X,C]$ be equal to
\begin{center}
\begin{tabular}{|c|}
\hline
 $I_{|C|}$\\
\hline
$I_{|C|}$\\
\hline
$I_{|C|}$\\
\hline
$1\cdots1$\\
$1\cdots1$\\
\hline
\multirow{2}{*}{0}\\
\\
\hline
\end{tabular}.
\end{center}

This implies that $n\geq 3|C|+2$ and, therefore, $r=n-|C|+|X|-t\geq2|C|+|X|-t+2$.

\begin{claim}
\label{cla:no-parallels}
 For every pair $\{e,f\}\subseteq E-(C\cup Y_1)$, the set $C\cup\{e,f\}$ is independent in $M(A)$.
\end{claim}

\begin{subproof}
Note that every column of $A[B-X,E-(C\cup Y_1)]$ has at most two nonzero entries. So the columns of $A[B-X,E-(C\cup Y_1)]$ labeled by $e$ and $f$ have nonzero entries in at most four rows. We proceed by induction on $|C|$. Suppose $|C|=1$. Since the single column of $A[B-X,C]$ has five nonzero entries, there is a unit row in $A[B-X,C\cup\{e,f\}]$ whose nonzero entry is in $C$. This implies that $r(C\cup\{e,f\})=r(\{e,f\})+1$. Since $e$ and $f$ are not parallel elements, $C\cup \{e,f\}$ must be independent.

Now suppose $|C|>1$. Then there are at least $3|C|\geq6$ unit rows in $A[B-X,C]$. Since $e$ and $f$ have nonzero entries in at most four rows, this implies that there is a unit row in $A[B-X,C\cup\{e,f\}]$ with its nonzero entry in a column labeled by some element $c\in C$. Thus $r(C\cup\{e,f\})=r((C-c)\cup\{e,f\})+1$. By the induction hypothesis, $(C-c)\cup\{e,f\}$ is independent. Therefore, $C\cup\{e,f\}$ is independent also.
\end{subproof}

Claim \ref{cla:no-parallels} implies that, when $C$ is contracted, the resulting matroid is still simple. Thus, $\varepsilon(M)=\varepsilon(M(A)\backslash Y_1)-|C|=$ \[|\Gamma||\Lambda|\binom{n}{2}+|\Lambda|n+|\widehat{\Lambda}|+|Y_1|n+|\widehat{Y_0}|.\] Some arithmetic, recalling that $n=r+|C|+t-|X|$, shows that this implies the result.
\end{proof}

\begin{lemma}
 \label{lem:extreamaltemplate}
Suppose Hypothesis \ref{hyp:cliquetemplate} holds, and let $\mathbb{F}$ be a finite field. Let $\mathcal{M}$ be a quadratically dense minor-closed class of $\mathbb{F}$-represented matroids, and let $\{\Phi_1,\dots,\Phi_s,\Psi_1,\dots,\Psi_t\}$ be the set of templates given by Hypothesis \ref{hyp:cliquetemplate}. For all sufficiently large $r$, the extremal matroids of $\mathcal{M}$ are the extremal matroids of the templates in some subset of $\{\Phi_1,\dots,\Phi_s\}$.
\end{lemma}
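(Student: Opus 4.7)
The plan is to show that for all sufficiently large $r$, the extremal function $h_{\mathcal{M}}(r)$ coincides with the pointwise maximum $p(r)=\max_{1\le i\le s}p_i(r)$ of the (quadratic) extremal functions $p_i$ of the primal templates $\Phi_i$, and that every rank-$r$ extremal matroid of $\mathcal{M}$ conforms to some $\Phi_i$ attaining this maximum. Since $\mathcal{M}$ is quadratically dense, Theorem~\ref{thm:growthrate} gives $h_{\mathcal{M}}(r)=O(r^2)$, so there is a fixed integer $m$ for which no matroid in $\mathcal{M}$ has a $PG(m-1,\bFp)$-minor. Lemma~\ref{lem:polynomial} provides, for each $\Phi_i$, a quadratic polynomial $p_i$ with positive leading coefficient $\tfrac{1}{2}|\Gamma_i||\Lambda_i|>0$ that gives the exact extremal function of $\mathcal{M}(\Phi_i)$ for large $r$. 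Because there are only finitely many such polynomials, the pointwise maximum $p$ is itself quadratic once $r$ is large enough, and is attained on a fixed nonempty subset $\mathcal{S}\subseteq\{\Phi_1,\ldots,\Phi_s\}$. Since $\mathcal{M}(\Phi_i)\subseteq\mathcal{M}$ for every $i$, we obtain the lower bound $h_{\mathcal{M}}(r)\ge p(r)$ for free.

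For the matching upper bound, I would assume for a contradiction that $h_{\mathcal{M}}(r)>p(r)$ for infinitely many $r$, and let $k,n$ be the constants provided by Hypothesis~\ref{hyp:cliquetemplate}. Theorem~\ref{thm:geelen-nelson}, applied to the quadratic $p$, then yields arbitrarily large ranks $r$ admitting a vertically $k$-connected $M\in\mathcal{M}$ with $\varepsilon(M)>p(r(M))$; replacing $M$ by its simplification (which preserves rank, $\varepsilon$, vertical $k$-connectivity, and membership in $\mathcal{M}$) I may assume $M$ is simple. Because $\varepsilon(M)$ grows quadratically while $M$ excludes $PG(m-1,\bFp)$ as a minor, Theorem~\ref{thm:excludegraph} forces $M$ to contain an $M(K_n)$-minor once $r(M)$ is large enough. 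Hypothesis~\ref{hyp:cliquetemplate} now gives two possibilities: either $M\in\mathcal{M}(\Phi_j)$, contradicting $\varepsilon(M)>p(r(M))\ge p_j(r(M))$; or $M^*\in\mathcal{M}(\Psi_{j'})$, in which case $M$ is a bounded-rank perturbation of the dual of a frame matroid, so Lemmas~\ref{framedualbound} and~\ref{sizedifference} yield $\varepsilon(M)=O(r(M))$, again contradicting $\varepsilon(M)>p(r(M))$ for large $r(M)$. Hence $h_{\mathcal{M}}(r)=p(r)$ for all large enough $r$.

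Finally, let $M$ be any rank-$r$ extremal matroid of $\mathcal{M}$ for $r$ sufficiently large. Applying Lemma~\ref{lem:vertextremal} with $f=p$ (which agrees with a single $p_{i_0}$ for large $r$) shows that $M$ is vertically $k$-connected; $M$ is simple by definition, excludes $PG(m-1,\bFp)$ as a minor, and, by Theorem~\ref{thm:excludegraph} once more, has an $M(K_n)$-minor. The dual alternative is again ruled out by the linear $\varepsilon$-bound of Lemmas~\ref{framedualbound} and~\ref{sizedifference}, so Hypothesis~\ref{hyp:cliquetemplate} forces $M\in\mathcal{M}(\Phi_j)$ for some $j$. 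Then $p(r)=\varepsilon(M)\le p_j(r)\le p(r)$, giving equality throughout, so $\Phi_j\in\mathcal{S}$ and $M$ is an extremal matroid of $\Phi_j$. Conversely every extremal matroid of a template in $\mathcal{S}$ lies in $\mathcal{M}$ with exactly $p(r)$ elements and is therefore extremal for $\mathcal{M}$. The main obstacle is verifying that the single matroid $M$ produced by Theorem~\ref{thm:geelen-nelson} (and, in the last step, by extremality) simultaneously satisfies all hypotheses of Hypothesis~\ref{hyp:cliquetemplate}: simplicity comes for free after simplification or by definition, the $PG$-exclusion from the choice of $m$, the $M(K_n)$-minor from Theorem~\ref{thm:excludegraph}, and the connectivity from Theorem~\ref{thm:geelen-nelson} or Lemma~\ref{lem:vertextremal}; while the dual case is eliminated uniformly by the linear growth of $\varepsilon$ on duals of frame matroids.
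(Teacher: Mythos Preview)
Your approach is essentially the same as the paper's, but you misread Hypothesis~\ref{hyp:cliquetemplate}. Its part~(3) states that a simple vertically $k$-connected $M\in\mathcal{M}$ with an $M(K_n)$-minor and no $PG(m-1,\bFp)$-minor conforms to some $\Phi_j$ outright; there is no ``or $M^*\in\mathcal{M}(\Psi_{j'})$'' alternative. The $\Psi$-templates appear only in part~(4), which concerns \emph{cyclically} $k$-connected matroids with an $M^*(K_n)$-minor, and plays no role in this lemma. Thus your case~(b) never arises, and the detour through Lemmas~\ref{framedualbound} and~\ref{sizedifference} is superfluous (though your handling of that spurious case is correct, so the argument still goes through).

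The paper exploits part~(3) via the contrapositive: if the vertically $k$-connected matroids produced by Theorem~\ref{thm:geelen-nelson} with $\varepsilon>p(r)$ conformed to no $\Phi_i$, then (being simple and excluding $PG(m-1,\bFp)$) they must lack an $M(K_n)$-minor, whence Theorem~\ref{thm:excludegraph} gives a linear bound on $\varepsilon$ and the contradiction. You instead first use Theorem~\ref{thm:excludegraph} to secure the $M(K_n)$-minor and then apply the hypothesis; both orderings are fine, but yours is what generates the phantom dual case. One small omission: Lemma~\ref{lem:polynomial} applies only to $Y$-reduced templates with no column of $A_1[X,Y_1]$ in $\Lambda$, so you should invoke Lemma~\ref{lem:no_copies} first (as the paper does) to justify replacing each $\Phi_i$ by an equivalent template of that form before appealing to the quadratic formula.
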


\begin{proof}
Let $p$ be the characteristic of $\mathbb{F}$. Since $\mathcal{M}$ is a quadratically dense minor-closed class and since $\varepsilon(\mathrm{PG}(r-1,\bFp))=\frac{p^r-1}{p-1}$, for all sufficiently large $r$, no member of $\mathcal{M}$ contains $\mathrm{PG}(r-1,\bFp)$ as a minor. By Hypothesis \ref{hyp:cliquetemplate}, there are a pair of integers $k,n$ such that every simple vertically $k$-connected member of $\mathcal{M}$ with an $M(K_n)$-minor is a member of at least one of the classes $\mathcal{M}(\Phi_1),\ldots,\mathcal{M}(\Phi_s)$.

By Lemmas \ref{lem:no_copies} and \ref{lem:polynomial}, for every frame template $\Phi$ and for all sufficiently large $r$, the size of a rank-$r$ extremal matroid of $\Phi$ is given by a quadratic polynomial in $r$. Thus, for all sufficiently large $r$, the size of the largest simple rank-$r$ matroid that conforms to some template in $\{\Phi_1,\dots,\Phi_s\}$ is given by a quadratic polynomial $h_{\mathcal{M}}'(r)$.

By definition, $h_{\mathcal{M}}(r)\geq h'_{\mathcal{M}}(r)$. We wish to show that equality holds for all sufficiently large $r$. Suppose otherwise. Then, for infinitely many $r$, we have $h_{\mathcal{M}}(r)>h'_{\mathcal{M}}(r)$. Theorem \ref{thm:geelen-nelson}, with $h_{\mathcal{M}}'(r)$ playing the role of $p(n)$ and with $k$ playing the role of $s$, implies that, for infinitely many $r$, there is a vertically $k$-connected rank-$r$ matroid $M_r\in\mathcal{M}$ with $\varepsilon(M_r)>h'_{\mathcal{M}}(r)$. Thus, these $M_r$ do not conform to any template in $\{\Phi_1,\dots,\Phi_s\}$. By Hypothesis \ref{hyp:cliquetemplate}, these $M_r$ contain no $M(K_n)$ minor. However, by Theorem \ref{thm:excludegraph}, there is an integer $c$ such that $\varepsilon(M_r)\leq cm$. This contradicts the fact that $\varepsilon(M_r)>h'_{\mathcal{M}}(r)$ for all $r$. By contradiction, we determine that $h_{\mathcal{M}}(r)=h'_{\mathcal{M}}(r)$, for all sufficiently large $r$.

Therefore, we know that, for all sufficiently large $r$, the extremal function $h_{\mathcal{M}}(r)$ is given by a quadratic polynomial. Now, Lemma \ref{lem:vertextremal} implies that, for all sufficiently large $r$, the rank-$r$ extremal matroids of $\mathcal{M}$ are vertically $k$-connected. Thus, by Hypothesis \ref{hyp:cliquetemplate}, it suffices to show that, for all sufficiently large $r$, the largest simple matroids of rank $r$ contain $M(K_n)$ as a minor. Suppose otherwise. Then, for infinitely many $r$, the largest simple matroids in $\mathcal{M}$ of rank $r$ have no $M(K_n)$-minor. By Theorem \ref{thm:excludegraph}, for infinitely many $r$, the largest simple matroids in $\mathcal{M}$ of rank $r$ have size at most $cr$, for some integer $c$. This contradicts the quadratic density of $\mathcal{M}$.
\end{proof}

Because of Lemma \ref{lem:extreamaltemplate}, the results in ~\cite{gvz17} regarding extremal functions (~\cite[Theorem 1.3]{gvz17}, ~\cite[Theorem 4.2]{gvz17}, and ~\cite[Corollary 4.10]{gvz17}) now require Hypothesis \ref{hyp:cliquetemplate}. However, ~\cite[Theorem 4.2]{gvz17} was also proved (independently of \cite{ggw15}) by Geelen and Nelson \cite[Corollary 1.6]{gn15}. Furthermore, ~\cite[Corollary 4.10]{gvz17} follows easily from ~\cite[Theorem 4.2]{gvz17}.

Finally, we modify \cite[Theorem 1.4]{gvz17} to require Tutte connectivity, rather than vertical connectivity.

\begin{theorem}
 \label{thm:1-flowing}
If Hypothesis \ref{hyp:connected-template} holds, then there exists $k\in\mathbb{Z}_+$ such that every simple, $k$-connected, 1-flowing matroid with at least $2k$ elements is either graphic or cographic.
\end{theorem}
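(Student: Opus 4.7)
The plan is to apply Hypothesis~\ref{hyp:connected-template} to the class $\mathcal{M}$ of 1-flowing matroids and then invoke the classification of minimal nontrivial binary frame templates~\cite[Theorem 3.19]{gvz17} to force every surviving template to be trivial, following the template analysis used in the proof of~\cite[Theorem 1.4]{gvz17}. Since $U_{2,4}$ and $F_7^*$ are known not to be 1-flowing, every member of $\mathcal{M}$ is binary and $\mathcal{M}$ excludes $\mathrm{PG}(m-1,\mathbb{F}_2)$ for all sufficiently large $m$. Fix one such $m$ and apply Hypothesis~\ref{hyp:connected-template} over $\mathrm{GF}(2)$ to obtain $k\in\mathbb{Z}_+$ together with lists of frame templates $\Phi_1,\ldots,\Phi_s$ and $\Psi_1,\ldots,\Psi_t$ satisfying $\mathcal{M}(\Phi_i)\subseteq\mathcal{M}$ and $\{N^*:N\in\mathcal{M}(\Psi_j)\}\subseteq\mathcal{M}$; by Lemma~\ref{lem:all-Y-reduced} we may further assume that each of these templates is $Y$-reduced.

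Let $\Phi_0$ denote the trivial $Y$-reduced binary frame template, with $\Gamma=\{1\}$ and $C=X=Y_0=Y_1=\emptyset$, whose conformity class is precisely the class of simple graphic matroids. The strategy is to show that any $Y$-reduced binary frame template not equivalent to $\Phi_0$ dominates one of the minimal nontrivial templates enumerated in~\cite[Theorem 3.19]{gvz17}, and that each of those minimal templates admits a matroid in its conformity class containing $M^*(K_5)$ as a minor. Since $M^*(K_5)$ is not 1-flowing, no such template can occur among the $\Phi_i$, so each $\Phi_i$ is equivalent to $\Phi_0$ and every matroid conforming to some $\Phi_i$ is graphic. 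Applying the same template analysis in its dualized form to the $\Psi_j$---equivalently, showing that every nontrivial $Y$-reduced binary frame template admits a matroid whose dual contains an excluded minor (such as $M^*(K_5)$ or $F_7^*$) for 1-flowing---forces each $\Psi_j$ to be equivalent to $\Phi_0$ as well, so that $M^*\in\mathcal{M}(\Psi_j)$ implies $M$ is cographic.

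Combining these, Hypothesis~\ref{hyp:connected-template} yields that every simple $k$-connected member $M\in\mathcal{M}$ with at least $2k$ elements (for which the absence of a $\mathrm{PG}(m-1,\mathbb{F}_2)$-minor is automatic) is graphic or cographic, which is the desired dichotomy. The main obstacle is the case-by-case verification that each minimal nontrivial binary frame template from~\cite[Theorem 3.19]{gvz17} forces $M^*(K_5)$ as a minor somewhere in its conformity class, together with its dualized counterpart used to eliminate the $\Psi_j$; these case analyses are essentially the content of the proof of~\cite[Theorem 1.4]{gvz17} and, being structural statements about the templates themselves rather than statements about their role in any larger decomposition theorem, they are unaffected by the failure of Conjecture~\ref{con:template}.
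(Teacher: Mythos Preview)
The paper does not provide a proof of this theorem; it simply records it as the modification of \cite[Theorem~1.4]{gvz17} obtained by replacing vertical connectivity (via Conjecture~\ref{con:template}) with Tutte connectivity (via Hypothesis~\ref{hyp:connected-template}). Your proposal expands exactly that implicit argument: apply the new hypothesis to the minor-closed class of 1-flowing matroids over $\mathrm{GF}(2)$, and then reuse the template analysis from \cite{gvz17} (which, as the paper notes, concerns templates themselves and remains valid) to force each $\Phi_i$ and $\Psi_j$ to be trivial. This is the intended route, and your sketch is sound.

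One small caution: you invoke Lemma~\ref{lem:all-Y-reduced} to put the templates into $Y$-reduced form, whereas the template reductions actually used in \cite[Section~3]{gvz17} to classify the minimal nontrivial binary templates are the ones native to that paper, not the Nelson--Walsh $Y$-reduction. This is harmless---both normalizations are compatible with the subsequent excluded-minor checks---but if you write out the details you should either stick with the \cite{gvz17} reductions throughout or verify that the list in \cite[Theorem~3.19]{gvz17} is stated in a form that meshes with $Y$-reduced templates.
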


Another paper that relies on the faulty version of the structure theorem is \cite{nvz15}. (It is stated as Hypothesis 4.1 in that paper.) The main result of \cite{nvz15} is \cite[Theorem 1.1]{nvz15}. In order to recover this result, \cite[Lemma 3.1]{nvz15} will need to be refined in order to deal with represented matroids close to the dual of a represented frame matroid. Since this case is significantly easier than the case in which the represented matroid is close to a represented frame matroid, Nelson and Van Zwam expect that the main result can be recovered, but elect to wait with their fix until the structure theorem has been proven by Geelen, Gerards, and Whittle.

\section{Refined Templates}
\label{sec:refined_templates}
In this section, we prove a result that will be of interest for future work. Nelson and Walsh gave Definition \ref{def:reduced} and proved Lemma \ref{lem:all-reduced} below.

\begin{definition}
 \label{def:reduced}
A frame template $\Phi=(\Gamma,C,X,Y_0,Y_1,A_1,\Delta,\Lambda)$ is \emph{reduced} if there is a partition $(X_0,X_1)$ of $X$ such that
\begin{itemize}
 \item $\Delta=\Gamma(\mathbb{F}^C_p\times\Delta')$ for some additive subgroup $\Delta'$ of $\mathbb{F}^{Y_0\cup Y_1}$,
\item$\mathbb{F}_p^{X_0}\subseteq\Lambda[X_0]$ while $\Lambda[X_1]=\{0\}$ and $A_1[X_1,C]=0$, and
\item the rows of $A_1[X_1,C\cup Y_0\cup Y_1]$ form a basis for a subspace whose additive subgroup is skew to $\Delta$.
\end{itemize}
\end{definition}

\begin{lemma}[{\cite[{Lemma 5.6}]{nw17}}]
 \label{lem:all-reduced}
Every frame template is equivalent to a reduced frame template.
\end{lemma}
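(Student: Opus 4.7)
By Lemma~\ref{lem:all-Y-reduced}, it suffices to transform a $Y$-reduced frame template $\Phi=(\Gamma,C,X,Y_0,Y_1,A_1,\Delta,\Lambda)$, with reduction partition $X=X_0\cup X_1$, into an equivalent reduced template. In any $Y$-reduced template, the conditions $\Delta[C]=\Gamma\mathbb{F}_p^C$ and $\Delta[Y_0\cup Y_1]=\{0\}$ force $\Delta=\Gamma(\mathbb{F}_p^C\times\{0\})$, which already has the form required by Definition~\ref{def:reduced} with $\Delta'=\{0\}$, and the two $\Lambda$-conditions carry over verbatim. The outstanding tasks are therefore (a) to arrange $A_1[X_1,C]=0$, and (b) to arrange that the rows of $A_1[X_1,C\cup Y_0\cup Y_1]$ form a basis of a subspace skew to $\Delta$. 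Once (a) holds, those rows lie in $\{0\}\times\mathbb{F}^{Y_0\cup Y_1}$, which is skew to $\Gamma(\mathbb{F}_p^C\times\Delta')$ exactly when its span meets $\Delta'$ only at zero, so the residual part of (b) is a condition purely on the $Y_0\cup Y_1$-side.

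For (a), the plan is a pivot-and-contract step. Suppose $A_1[x,c]\neq 0$ for some $x\in X_1$ and $c\in C$; after scaling row $x$ we may assume $A_1[x,c]=1$. In any matrix $A'$ respecting $\Phi$, the row indexed by $x$ is supported inside $C\cup Y_0\cup Y_1$: indeed $\Lambda[X_1]=\{0\}$ forces $A'[x,E-(C\cup Y_0\cup Y_1\cup Z)]=0$, and $A'[x,Z]=0$ always. Pivoting on the entry $A'[x,c]=1$ therefore preserves the vector matroid, leaves the $\Gamma$-frame submatrix, the unit columns in $Z$, and the $\Lambda$-rows of $X\setminus\{x\}$ untouched, and modifies only entries in columns of $C\cup Y_0\cup Y_1$. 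After the pivot, column $c$ becomes the unit column $e_x$, so contracting $c$ amounts to deleting row $x$ and column $c$. I would then check that the truncated matrix respects a new template $\Phi''$ on $X\setminus\{x\}$ and $C\setminus\{c\}$, with updated $A_1''[r,y]=A_1[r,y]-A_1[r,c]A_1[x,y]$ and with $\Delta''$ enlarged to absorb the new contribution $-\delta[c]A_1[x,Y_0\cup Y_1]$ on the $Y_0\cup Y_1$-side. The equivalence $\mathcal{M}(\Phi)=\mathcal{M}(\Phi'')$ is verified by going back and forth: every matrix respecting $\Phi''$ extends to one respecting $\Phi$ by reinserting row $x$ and column $c$ as a unit column. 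Iterating this step until $A_1[X_1,C]=0$ handles~(a).

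For (b), I would perform invertible row operations within the $X_1$-block of $A_1$. These correspond to invertible row operations on the $X_1$-rows of every respecting matrix, so they do not change the conforming matroids, and they preserve $A_1[X_1,C]=0$ and $\Lambda[X_1]=\{0\}$. Row-reducing $A_1[X_1,Y_0\cup Y_1]$ to echelon form produces some zero rows; each such row is identically zero in every respecting matrix and can be removed from $X$ outright. Any lingering dependence between the surviving rows and $\Delta'$ is eliminated by absorbing the offending row into $\Delta'$ and deleting it from $X_1$, which is legitimate precisely because the row lies in $\{0\}\times\mathbb{F}^{Y_0\cup Y_1}$.

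The main obstacle I anticipate is the bookkeeping for $\Delta''$ in the pivot step when the entries of $A_1[x,C\setminus\{c\}]$ do not lie in $\Gamma\mathbb{F}_p$: a naive pivot would shift $\Delta''[C\setminus\{c\}]$ away from $\Gamma\mathbb{F}_p^{C\setminus\{c\}}$, breaking the decomposition $\Gamma(\mathbb{F}_p^{\bar C}\times\bar\Delta')$ required of a reduced template. Circumventing this will likely require a preparatory $\mathbb{F}_p$-linear column operation on $C$ to normalize $A_1[x,C]$, or a simultaneous adjustment of $\Delta$ during the pivot; verifying that $\bar\Delta'$ remains well-defined, additive, and $\Gamma$-closed across successive pivots is the technical heart of the argument.
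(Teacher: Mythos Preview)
The paper does not supply its own proof of this lemma. It is quoted verbatim as \cite[Lemma~5.6]{nw17} and followed only by the remark that, since it is a statement about templates in their own right rather than an application of Conjecture~\ref{con:template}, its validity is unaffected by the issues raised in this paper. There is therefore nothing in the paper to compare your proposal against.

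Regarding the sketch itself: your opening reduction is sound---in a $Y$-reduced template the conditions $\Delta[C]=\Gamma\mathbb{F}_p^C$ and $\Delta[Y_0\cup Y_1]=\{0\}$ do force $\Delta=\Gamma(\mathbb{F}_p^C\times\{0\})$, so only conditions~(a) and~(b) remain. The pivot-and-contract manoeuvre for~(a) is a reasonable line of attack, and your row-reduction plan for~(b) is standard. However, you correctly identify the genuine sticking point yourself: after a pivot on an entry of $A_1[X_1,C]$, the induced modification of the $\Delta$-rows need not preserve the decomposition $\Gamma(\mathbb{F}_p^{\bar C}\times\bar\Delta')$, and your proposal stops short of resolving this. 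As written, then, what you have is a credible outline with an acknowledged gap rather than a complete argument; the actual verification that the adjusted $\Delta''$ retains the required form (additive, $\Gamma$-closed, and splitting as $\Gamma(\mathbb{F}_p^{\bar C}\times\bar\Delta')$) through an iterated sequence of pivots is precisely the content you would need to supply, and is where the work in \cite{nw17} lies.
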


Although \cite{nw17} was based on the faulty version of the structure theorem, Lemma \ref{lem:all-reduced} is a result about templates in their own right, rather than Conjecture \ref{con:template}, and is true for that reason. We will refer to the partition $X=X_0\cup X_1$ given in Definition \ref{def:reduced} as the \emph{reduction partition} of $\Phi$. We also introduce the following definition.

\begin{definition}
 \label{def:refined}
A frame template $\Phi=(\Gamma,C,X,Y_0,Y_1,A_1,\Delta,\Lambda)$ is \emph{refined} if it is reduced, with reduction partition $X=X_0\cup X_1$, and if $Y_1$ spans the matroid $M(A_1[X_1,Y_0\cup Y_1])$.
\end{definition}

\begin{remark}
Each of the minimal nontrivial binary frame templates given in \cite{gvz17} is reduced (often in trivial or vacuous ways) and, in fact, refined. 
\end{remark}

We wish to show that, for the purposes of using the Hypotheses given in Section \ref{sec:discussion}, only refined frame templates must be considered.

\begin{lemma}
\label{lem:notspanning}
 Let $\Phi=(\Gamma,C,X,Y_0,Y_1,A_1,\Delta,\Lambda)$ be a reduced frame template that is not refined. If $M\in\mathcal{M}(\Phi)$, then $E(M)-Y_0$ is not spanning in $M$.
\end{lemma}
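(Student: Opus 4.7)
The plan is to exhibit, for each $M \in \mathcal{M}(\Phi)$, an explicit linear functional on the ambient column space that annihilates every column of a conforming matrix indexed by $E - Y_0 - Y_1$ while being nonzero on some column indexed by $Y_0$. Such a functional witnesses that some $y_0 \in Y_0$ lies outside the span of the remaining columns even after quotienting by $\mathrm{span}(A[B,C])$, which is exactly the assertion that $E(M) - Y_0$ is not spanning in $M = M(A)/C \setminus Y_1$.

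The functional comes straight from the non-refined hypothesis. Since $\Phi$ is reduced but not refined, $Y_1$ fails to span $M(A_1[X_1, Y_0 \cup Y_1])$, so some column $A_1[X_1, y_0^\ast]$ with $y_0^\ast \in Y_0$ is not in the column span of $A_1[X_1, Y_1]$. Dually, I would pick $u \in \mathbb{F}^{X_1}$ with $u^{T} A_1[X_1, Y_1] = 0$ and $u^{T} A_1[X_1, y_0^\ast] \neq 0$, and define $f\colon \mathbb{F}^{B} \to \mathbb{F}$ by $f(v) = u^{T} v[X_1]$. Fixing a respecting matrix $A'$ from which $A$ is modified, I would then verify $f(A[B,e]) = 0$ for every $e \in E - Y_0 - Y_1$ by a three-case check that uses only the reduced structure: for $e \in C$, we have $A[X_1, e] = A_1[X_1, e] = 0$ by the reduced definition; for $e \in E - (C \cup Y_0 \cup Y_1 \cup Z)$ the $X_1$-block is zero because the column lies in $\Lambda$ and $\Lambda[X_1] = \{0\}$; for $e \in Z$, conformance together with $A'[X_1, Z] = 0$ forces $A[X_1, e] = A_1[X_1, j]$ for some $j \in Y_1$, which $u$ kills by construction. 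Meanwhile $f(A[B, y_0^\ast]) \neq 0$, so $A[B, y_0^\ast]$ does not lie in the span of the columns indexed by $E - Y_0 - Y_1$; since that span already contains the columns indexed by $C$, the strict inclusion is preserved after contracting $C$ and deleting $Y_1$, yielding $r_M(E(M) - Y_0) < r(M)$.

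The only real bookkeeping obstacle is the $Z$-columns, since these are the only columns where $A$ genuinely differs from $A'$. The reduced definition is arranged precisely for this situation: $A'[X_1, Z] = 0$ and the conforming modification adds a $Y_1$-column, so the $X_1$-block of a $Z$-column reduces to some $A_1[X_1, j]$ with $j \in Y_1$, which $u$ was chosen to annihilate. Every remaining case is immediate from the reduced hypotheses, so once the functional $f$ is in hand the remainder of the argument is a direct verification.
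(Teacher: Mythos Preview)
Your proof is correct and follows essentially the same approach as the paper's. The paper phrases the argument in cocircuit language---observing that $Y_0$ contains a cocircuit of $M(A_1[X_1,Y_0\cup Y_1])$, then extending this to a cocircuit of $M(A)$ using exactly the reduced-template facts you invoke ($A[X_1,E-(Y_0\cup Y_1\cup Z)]=0$ and each column of $A[X_1,Z]$ is a copy of some $A_1[X_1,j]$ with $j\in Y_1$)---whereas you make the dual linear-algebra explicit by producing the functional $f(v)=u^{T}v[X_1]$. The three-case verification and the passage to $M=M(A)/C\setminus Y_1$ are the same in both arguments.
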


\begin{proof}
Let $A$ be the matrix conforming to $\Phi$ such that $M=M(A)/C\backslash Y_1$. Since $\Phi$ is not refined, $Y_1$ does not span $M(A[Y_0\cup Y_1])$. Therefore, $Y_0$ contains a cocircuit in $M(A[Y_0\cup Y_1])$. In fact, since the definition of reduced implies that $A[X_1,E-(Y_0\cup Y_1\cup Z)]$ is the zero matrix, and since every column of $A[X_1,Z]$ is a copy of a column of $A[X_1,Y_1]$, we see that $Y_0$ contains a cocircuit in $M(A)$. This implies that $Y_0$ also contains a cocircuit in $M=M(A)/C\backslash Y_1$. Thus, $E(M)-Y_0$ is not spanning in $M$.
\end{proof}

Now we prove the main result of this section.

\begin{theorem}\label{thm:Y1spanning}
If Hypothesis \ref{hyp:connected-template} holds for a class $\mathcal{M}$, then the constant $k$, and the templates $\Phi_1,\ldots,\Phi_s,\Psi_1,\ldots,\Psi_t$ can be chosen so that the templates are refined. Moreover, If Hypothesis \ref{hyp:cliquetemplate} holds for a class $\mathcal{M}$, then the constants $k, n$, and the templates $\Phi_1,\ldots,\Phi_s,\Psi_1,\ldots,\Psi_t$ can be chosen so that the templates are refined.
\end{theorem}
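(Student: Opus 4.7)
The plan is to start from the templates guaranteed by the relevant hypothesis, replace each by an equivalent reduced template via Lemma \ref{lem:all-reduced}, and then show that any non-refined template in the resulting list can be discarded after suitably enlarging the connectivity constant (and, in the clique version, also $n$). Since equivalent templates have the same class of conforming matroids, the containment clauses of the hypothesis remain valid after the replacement. Let $y$ denote the maximum of $|Y_0|$ taken over all templates in this finite reduced list.

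Suppose $\Phi$ is reduced but not refined and $M \in \mathcal{M}(\Phi)$. By Lemma \ref{lem:notspanning}, $E(M) - Y_0$ is not spanning in $M$, so $Y_0$ contains a cocircuit $C^*$ of $M$; hence $|C^*| \leq |Y_0| \leq y$ and $\lambda_M(C^*) = r(C^*) + r(E - C^*) - r(M) = r(C^*) - 1 \leq |C^*| - 1$. For Hypothesis \ref{hyp:connected-template}, enlarge the hypothesis constant $k_0$ to $k := \max(k_0, y+1)$. If $M$ is simple, $k$-connected, and $|E(M)| \geq 2k$, then $|E(M) - C^*| \geq 2k - y > y \geq |C^*|$, so $(C^*, E(M) - C^*)$ is a $|C^*|$-separation of $M$; since $|C^*| \leq y \leq k-1$, this contradicts $k$-connectivity. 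Thus no such $M$ conforms to $\Phi$, and $\Phi$ may be dropped from the list. The same argument applies to each $\Psi_j$, using that Tutte $k$-connectivity is preserved under duality.

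For Hypothesis \ref{hyp:cliquetemplate}, use vertical (respectively cyclic) $k$-connectivity in place of Tutte. If $M$ is vertically $k$-connected with $k > y$ and $M \in \mathcal{M}(\Phi)$ for non-refined $\Phi$, then $\lambda_M(C^*) \leq y - 1 < k$, so one of $C^*$ or $E(M) - C^*$ must span. Since the complement of a cocircuit is a hyperplane, $E(M) - C^*$ does not span, so $C^*$ does, giving $r(M) = r(C^*) \leq |C^*| \leq y$. If, additionally, $M$ contains an $M(K_n)$-minor with $n \geq y + 2$, then $r(M) \geq n - 1 > y$, a contradiction. Dualizing handles the cyclically $k$-connected case for the $\Psi_j$ templates, via the fact that $M^*$ being vertically $k$-connected with an $M(K_n)$-minor is equivalent to $M$ being cyclically $k$-connected with an $M^*(K_n)$-minor. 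Taking $k := \max(k_0, y+1)$ and $n := \max(n_0, y+2)$ therefore permits discarding every non-refined template.

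The main obstacle is the careful bookkeeping required to separate the three connectivity notions (Tutte, vertical, cyclic) and to confirm in each case that the enlarged constants preserve the original hypothesis for the remaining refined templates; the substantive idea is Lemma \ref{lem:notspanning}, which converts the failure of refinement into a bounded-size cocircuit that is incompatible with sufficiently high connectivity combined with either a lower bound on the ground set (Hypothesis \ref{hyp:connected-template}) or a large clique minor (Hypothesis \ref{hyp:cliquetemplate}).
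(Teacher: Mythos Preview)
Your proof is correct and follows essentially the same approach as the paper: reduce each template via Lemma~\ref{lem:all-reduced}, use Lemma~\ref{lem:notspanning} to produce a small cocircuit in any matroid conforming to a non-refined template, and then argue that the relevant connectivity hypothesis (together with the size or clique-minor condition) is violated. The only notable difference is that in the clique case you bound $r(M)$ directly via the rank of the $M(K_n)$-minor, whereas the paper bounds $|E(M)|$ using that $M$ is simple and $\mathbb{F}$-representable; your version is slightly cleaner but not a different route.
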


\begin{proof}
Suppose that Hypothesis \ref{hyp:connected-template} holds, and let $\Phi\in\{\Phi_1,\ldots,\Phi_s$, $\Psi_1,\ldots,\Psi_t\}$. By Lemma \ref{lem:all-reduced}, we may assume that $\Phi$ is reduced with reduction partition $X=X_0\cup X_1$. Suppose for a contradiction that $\Phi$ is not refined. Choose $k\geq |Y_0|$. If $M$ is a $k$-connected represented matroid conforming to $\Phi$, then Lemma \ref{lem:notspanning} implies that $\lambda_M(Y_0)<r_M(Y_0)\leq |Y_0|$. Therefore, by $k$-connectivity, we must have $|E(M)-Y_0|<|Y_0|$. Thus, since $2k\geq 2|Y_0|$, we obtain a contradiction and conclude that the constant $k$, and the templates $\Phi_1,\ldots,\Phi_s$ can be chosen so that the templates are refined. Moreover, since $k$-connectivity is closed under duality, the templates $\Psi_1,\ldots,\Psi_t$ can be chosen to be refined as well.

Now suppose Hypothesis \ref{hyp:cliquetemplate} holds, and choose $k\geq|Y_0|$. Let $M$ be a simple, vertically $k$-connected member of some minor-closed class $\mathcal{M}$, and let $M$ have an $M(K_n)$-minor but no $PG(m-1,\bFp)$-minor for some positive integer $m$. Part (3) of Hypothesis \ref{hyp:cliquetemplate} implies that $M$ conforms to a template $\Phi\in\{\Phi_1,\ldots,\Phi_s\}$. By Lemma \ref{lem:all-reduced}, we may assume that $\Phi$ is reduced with reduction partition $X=X_0\cup X_1$. Suppose for a contradiction that $\Phi$ is not refined. By Lemma \ref{lem:notspanning}, $E(M)-Y_0$ is not spanning in $M$. This also implies that $\lambda_M(Y_0)<r_M(Y_0)\leq |Y_0|$. By vertical $k$-connectivity, $Y_0$ is spanning in $M$. Thus, $M$ is an $\mathbb{F}$-represented matroid of rank $r_M(Y_0)\leq|Y_0|$. Since $M$ is simple, we must have $|E(M)|\leq\frac{|\mathbb{F}|^{|Y_0|}-1}{|\mathbb{F}|-1}$; therefore, we set $n$ sufficiently large so that $|E(K_n)|=\binom{n}{2}>\frac{|\mathbb{F}|^{|Y_0|}-1}{|\mathbb{F}|-1}$. Thus, we see that the constants $k, n$, and the templates $\Phi_1,\ldots,\Phi_s$ can be chosen so that those templates are refined.

In the case where $M$ is a cosimple, cyclically $k$-connected member of $\mathcal{M}$ with an $M(K_n)$-minor but no $\mathrm{PG}(m-1,\bFp)$-minor for some positive integer $m$, we dualize the argument in the previous paragraph to conclude that the constants $k, n$, and the templates $\Psi_1,\ldots,\Psi_t$ can be chosen so that those templates are refined.
\end{proof}

\section*{Acknowledgements}
We would like to thank Jim Geelen, Peter Nelson, and Geoff Whittle for helpful discussions. In particular, Peter Nelson gave many helpful suggestions about the manuscript and alerted us to an error in one of the proofs.

\includepdf[pages={-}]{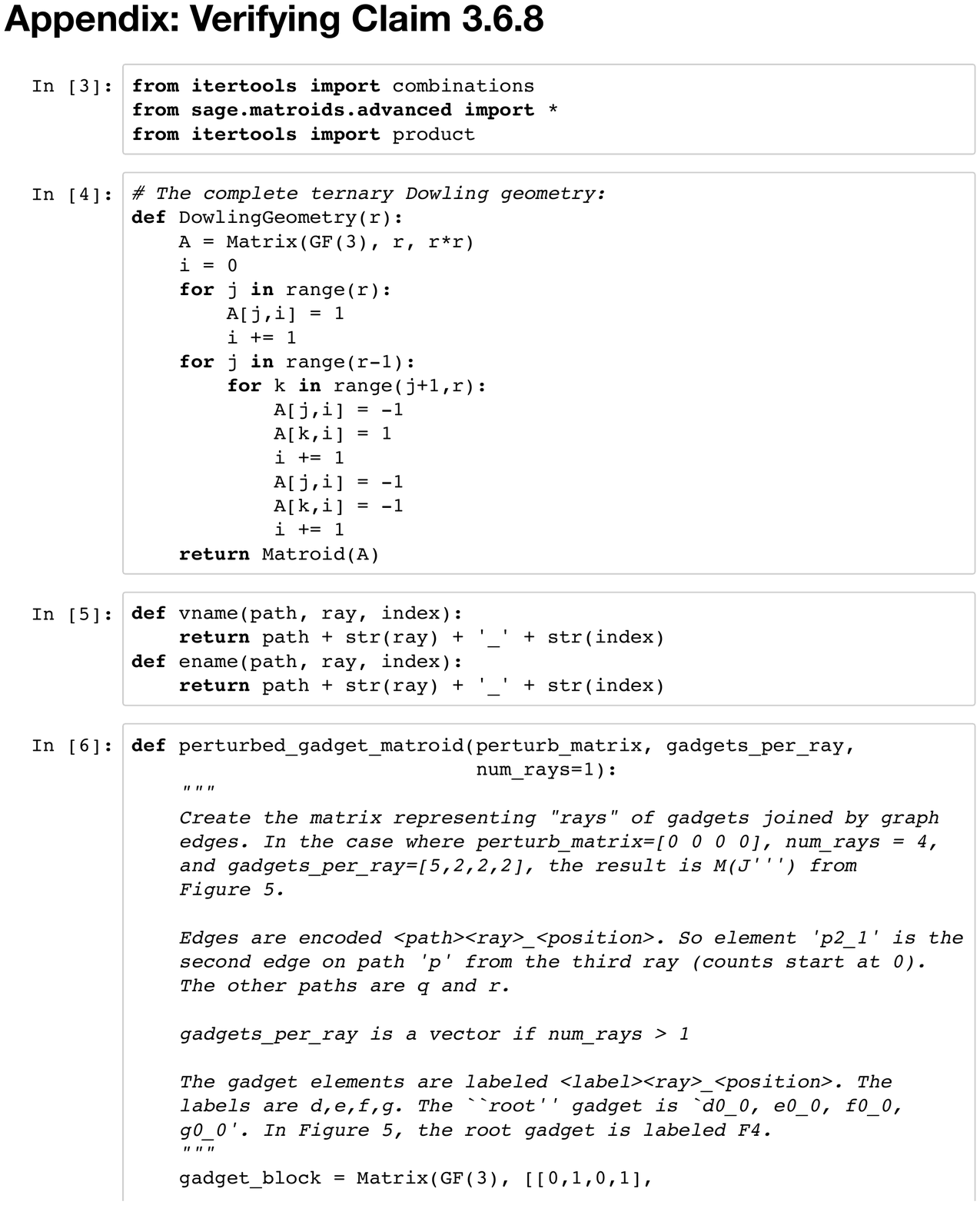}
\end{document}